\theoremstyle{plain}
    \newtheorem{theorem}{Theorem}[section]
    \newtheorem{proposition}[theorem]{Proposition}
    \newtheorem{lemma}[theorem]{Lemma}
\theoremstyle{definition}
    \newtheorem{conjecture}{Conjecture}
    \newtheorem{definition}[theorem]{Definition}
\theoremstyle{remark}
    \newtheorem*{notation}{Notation}
    \newtheorem{remark}[theorem]{Remark}
\def\Alphabet{A,B,C,D,E,F,G,H,I,J,K,L,M,N,O,P,Q,R,S,T,U,V,W,X,Y,Z}
\def\alphabet{a,b,c,d,e,f,g,h,i,j,k,l,m,n,o,p,q,r,s,t,u,v,w,x,y,z}
\def\endpiece{xxx}
\def\makeAlphabet[#1]{\expandafter\makeA#1,xxx,}
\def\makealphabet[#1]{\expandafter\makea#1,xxx,}
\def\makeA#1,{\def\temp{#1}\ifx\temp\endpiece\else%
\mkbb{#1}\mkfrak{#1}\mkbf{#1}\mkcal{#1}\mkscr{#1}\expandafter\makeA\fi}%
\def\makea#1,{\def\temp{#1}\ifx\temp\endpiece\else\mkfrak{#1}\mkbf{#1}\expandafter\makea\fi}%
\def\mkbb#1{\expandafter\def\csname bb#1\endcsname{\mathbb{#1}}}
\def\mkfrak#1{\expandafter\def\csname fr#1\endcsname{\mathfrak{#1}}}
\def\mkbf#1{\expandafter\def\csname b#1\endcsname{\mathbf{#1}}}
\def\mkcal#1{\expandafter\def\csname c#1\endcsname{\mathcal{#1}}}
\def\mkscr#1{\expandafter\def\csname s#1\endcsname{\mathscr{#1}}}
\def\makeop[#1]{\xmakeop#1,xxx,}
\def\mkop#1{\expandafter\def\csname #1\endcsname{{\mathrm{#1}}\,}} %
\def\xmakeop#1,{\def\temp{#1}\ifx\temp\endpiece\else\mkop{#1}\expandafter\xmakeop\fi}%
\def\makesymb[#1]{\xmakesymb#1,xxx,}
\def\mksymb#1{\expandafter\def\csname #1\endcsname{{\mathrm{#1}}}} %
\def\xmakesymb#1,{\def\temp{#1}\ifx\temp\endpiece\else\mksymb{#1}\expandafter\xmakeop\fi}%
\def\Fr{{\rm Fr}}
\def\N{{\rm N}}
\def\Ker{{Ker}}
\def\tors{{\rm tors}}
\def\dlim{\underrightarrow{\rm lim}}
\def\Fr{\mathrm{Fr}}
\def\Ker{\mathrm{Ker}}
\def\Im{\mathrm{Im}}
\def\Id{\mathrm{Id}}
\def\Coker{\mathrm{Coker}}
\def\Gal{\mathrm{Gal}}
\def\dlim{\underrightarrow{\lim}}
\def\varep{\varepsilon}
\def\sgn{\mathrm{sgn}}
\begin{document}

\title[Refined abelian Stark conjectures]{Refined abelian Stark conjectures and the equivariant leading term conjecture of Burns}
\author{Takamichi Sano}
\email{tkmc310@a2.keio.jp}
\address{Department of Mathematics\\Keio University\\3-14-1 Hiyoshi\\Kohoku-ku\\Yokohama\\223-8522\\Japan}

\thanks{The author is supported by Grant-in-Aid for JSPS Fellows.}

\begin{abstract}
We formulate a conjecture which generalizes Darmon's ``refined class number formula". We discuss relations between our conjecture and the equivariant leading term conjecture of Burns. As an application, we give another proof of 
the ``except $2$-part" of Darmon's conjecture, which was first proved by Mazur and Rubin.
\end{abstract}

\maketitle
\section{Introduction}

In \cite{B}, Burns formulated a refinement of the abelian Stark conjecture, which generalizes Gross's ``refined class number formula'' (\cite[Conjecture 4.1]{G}). He proved that a natural leading term conjecture, which is a special case of the ``equivariant Tamagawa number conjecture (ETNC)" (\cite[Conjecture 4 (iv)]{BF}) in the number field case, implies 
his refined abelian Stark conjecture (\cite[Theorem 3.1]{B}). Thus, he observed that Gross's conjecture is a consequence of the leading term conjecture. 

In this paper, using the idea of Darmon (\cite{D}), we attempt to generalize Burns's conjecture. Our main conjecture (Conjecture \ref{hnr}) is formulated as a generalization of  Darmon's ``refined class number formula" (\cite[Conjecture 4.3]{D}). We reformulate Burns's conjecture in Conjecture \ref{bconj} with slight modifications, and also propose some auxiliary conjectures (Conjectures \ref{descent} and \ref{phiconj}). We prove the following relation among these conjectures: assuming Conjecture \ref{phiconj}, Conjecture \ref{hnr} holds if and only if Conjectures \ref{descent} and \ref{bconj} hold (see Theorem \ref{rel}). Using the result of Burns (\cite[Theorem 3.1]{B}), we know that most of Conjecture \ref{bconj} is a consequence of the leading term conjecture (see Theorem \ref{bthm}). Hence, assuming Conjectures \ref{descent} and \ref{phiconj}, we deduce that Conjecture \ref{hnr} is a consequence of the leading term conjecture (see Theorem \ref{mainthm}). This is the main theorem of this paper. 

Our main theorem has the following application.  We can prove Conjectures \ref{descent} and \ref{phiconj} in the ``rank $1$" case, which was considered by Darmon, and deduce that (most of) Darmon's conjecture is a consequence of the leading term conjecture. By the works of Burns, Greither, and Flach (\cite{BG}, \cite{F}), the leading term conjecture is known to be true in this case. Hence, we give a proof of (most of) Darmon's conjecture. To be precise, we show that the ETNC for a particular Tate motive for abelian fields implies the ``except $2$-part" of Darmon's conjecture. In \cite{MR2}, Mazur and Rubin solved the ``except $2$-part" of Darmon's conjecture by using the theory of Kolyvagin systems (\cite{MR1}). Our approach gives another proof for it.

We sketch the idea of formulating Conjecture \ref{hnr}. Let $L'/L/k$ be a tower of finite extensions of global fields such that $L'/k$ is  abelian. We use Rubin's integral refinement of the abelian Stark conjecture (the Rubin-Stark conjecture, \cite{R2}). (This is Conjecture \ref{rsconj} in this paper.) Assuming the Rubin-Stark conjecture, let $\varep'$ and $\varep$ be the Rubin-Stark units lying over $L'$ and $L$ respectively. We define a ``higher norm'' $\N_{L'/L}^{(d)}(\varep')$ of $\varep'$, motivated by Darmon's construction of the ``theta-element" in \cite{D}. Roughly speaking, we observe the following property of the higher norm: we have
$$\Phi(\varep')=\Phi^{\Gal(L'/L)}(\N_{L'/L}^{(d)}(\varep'))$$
for every ``evaluator" $\Phi$ (see Proposition \ref{propnorm}). Burns's formulation (Conjecture \ref{bconj}) says that the equality 
$\Phi(\varep')=\Phi^{\Gal(L'/L)}(R(\varep))$ 
holds for every evaluator $\Phi$, where $R$ is the map constructed by local reciprocity maps. Therefore, it is natural to guess that the following equality holds: 
$$\N_{L'/L}^{(d)}(\varep')=R(\varep).$$
This equality is exactly our formulation of Conjecture \ref{hnr}, which generalizes Darmon's conjecture. 


After the author wrote the first version of this paper, the author was informed from Prof. Rubin that Mazur and Rubin also found the same conjecture as Conjecture \ref{hnr}. 
After that, their paper \cite{MR3} appeared in arXiv, and their conjecture is described in \cite[Conjecture 5.2]{MR3}. 
The author should also remark that, in the first version of this paper, there was a mistake in the formulation of Conjecture \ref{hnr}. 
We remark that the map $\mathbf{j}_{L/K}$ in \cite[Lemma 4.9]{MR3} is essentially the same as our injection $i$ in Lemma \ref{inj}, but Mazur and Rubin do not mention that $\mathbf{j}_{L/K}$ is injective. So our formulation of Conjecture \ref{hnr} is slightly stronger than \cite[Conjecture 5.2]{MR3}.

The organization of this paper is as follows. In \S \ref{alg}, we give algebraic foundations which will be frequently used in the subsequent sections. In \S \ref{secconj}, 
after a short preliminary on the Rubin-Stark conjecture and a review of some related known facts, we formulate the main conjectures, and also 
prove the main theorem (Theorem \ref{mainthm}). In \S \ref{appl}, as an application of Theorem \ref{mainthm}, we give another proof of 
the ``except $2$-part" of Darmon's conjecture (Mazur-Rubin's theorem).

\begin{notation}

For any abelian group $G$, $\bbZ[G]$-modules are simply called $G$-modules. The tensor product over $\bbZ[G]$ is denoted by $$-\otimes_G-.$$
Similarly, the exterior power over $\bbZ[G]$, and $\Hom$ of $\bbZ[G]$-modules are denoted by 
$$\bigwedge_G \ , \ \Hom_G(-,-)$$
respectively. We use the notations like this also for $\bbZ[G]$-algebras. 

For any subgroup $H$ of $G$, we define the norm element $\N_H\in \bbZ[G]$ by 
$$\N_H=\sum_{\sigma \in H}\sigma.$$

For any $G$-module $M$, we define 
$$M^G=\{ m \in M \ | \ \sigma m=m \mbox{ for all }\sigma \in G \}.$$

The maximal $\bbZ$-torsion subgroup of $M$ is denoted by $M_{\tors}$. 

For any $G$-modules $M$ and $M'$, we endow $M\otimes_\bbZ M'$ with a structure of a $G$-bimodule by 
$$\sigma(m\otimes m')=\sigma m \otimes m' \quad \text{and}\quad  \ (m \otimes m')\sigma= m\otimes \sigma m',$$
where $\sigma \in G$, $m \in M$ and $m' \in M'$. If $\varphi \in \Hom_G(M,M'')$, where $M''$ is another $G$-module, 
we often denote $\varphi \otimes \Id \in \Hom_G(M\otimes_\bbZ M',M''\otimes_\bbZ M')$ by $\varphi$.

\end{notation}

\section{Algebra} \label{alg}

\subsection{Exterior powers} \label{ext}

Let $G$ be a finite abelian group. For a $G$-module $M$ and $\varphi\in\Hom_G(M,\bbZ[G])$, there is a $G$-homomorphism
$$\bigwedge_G^rM \longrightarrow \bigwedge_G^{r-1}M$$
for all $r\in\bbZ_{\geq1}$, defined by 
$$m_1\wedge\cdots\wedge m_r \mapsto \sum_{i=1}^r (-1)^{i-1}\varphi(m_i)m_1\wedge\cdots\wedge m_{i-1}\wedge m_{i+1}\wedge\cdots \wedge m_r.$$
This morphism is also denoted by $\varphi$.

This construction gives a morphism
\begin{eqnarray}
\bigwedge_G^s\Hom_G(M,\bbZ[G]) \longrightarrow \Hom_G(\bigwedge_G^rM, \bigwedge_G^{r-s}M) \label{extmap}
\end{eqnarray}
for all $r, s\in \bbZ_{\geq0}$ such that $r\geq s$, defined by
$$\varphi_1\wedge\cdots\wedge \varphi_s \mapsto (m \mapsto \varphi_s \circ \cdots \circ \varphi_1(m)).$$
From this, we often regard an element of $\bigwedge_G^s\Hom_G(M,\bbZ[G])$ as an element of $\Hom_G(\bigwedge_G^rM, \bigwedge_G^{r-s}M)$.
Note that if $r=s$, $\varphi_1\wedge\cdots\wedge \varphi_r\in\bigwedge_G^r\Hom_G(M,\bbZ[G])$, and $m_1\wedge \cdots \wedge m_r \in \bigwedge_G^rM$, then we have 
$$(\varphi_1\wedge \cdots \wedge \varphi_r)(m_1\wedge \cdots \wedge m_r)= \det(\varphi_i(m_j))_{1\leq i, j \leq r}.$$

For a $G$-algebra $Q$ and $\varphi \in \Hom_G(M,Q)$, there is a $G$-homomorphism
$$\bigwedge_G^rM \longrightarrow (\bigwedge_G^{r-1}M) \otimes_{G}Q$$
defined by 
$$m_1\wedge\cdots\wedge m_r \mapsto \sum_{i=1}^r (-1)^{i-1}m_1\wedge\cdots\wedge m_{i-1}\wedge m_{i+1}\wedge\cdots \wedge m_r \otimes \varphi(m_i).$$
Similarly to the construction of (\ref{extmap}), we have a morphism
\begin{eqnarray}
\bigwedge_G^s\Hom_G(M,Q) \longrightarrow \Hom_G(\bigwedge_G^rM, (\bigwedge_G^{r-s}M)\otimes_G Q). \label{extmap2}
\end{eqnarray}

\subsection{Rubin's lattice}
In this subsection, we fix a finite abelian group $G$ and its subgroup $H$. Following Rubin \cite[\S 1.2]{R2}, we give the following definition.

\begin{definition} \label{deflat}
For a finitely generated $G$-module $M$ and $r\in\bbZ_{\geq0}$, we define Rubin's lattice by
$$\bigcap_G^r M =\{ m\in ( \bigwedge_G^rM ) \otimes_\bbZ \bbQ \ | \ \Phi(m)\in\bbZ[G] \mbox{ for all }\Phi\in\bigwedge_G^r\Hom_G(M, \bbZ[G]) \}.$$
Note that $\bigcap_G^0 M=\bbZ[G].$

\end{definition}

\begin{remark} \label{rlat}
We define 
$\iota : \bigwedge_G^r\Hom_G(M,\bbZ[G]) \rightarrow \Hom_G(\bigwedge_G^rM,\bbZ[G])$
by $\varphi_1 \wedge \cdots \wedge \varphi_r \mapsto \varphi_r \circ \cdots \circ \varphi_1$ (see (\ref{extmap})). It is not difficult to see that 
$$\bigcap_G^r M \stackrel{\sim}{\longrightarrow} \Hom_G(\Im \iota, \bbZ[G]) \quad ; \quad m \mapsto (\Phi \mapsto \Phi(m))$$
is an isomorphism (see \cite[\S 1.2]{R2}). 
\end{remark}

\begin{remark}
If $M \rightarrow M'$ is a morphism between finitely generated $G$-modules, then it induces a natural $G$-homomorphism 
$$\bigcap_G^rM \longrightarrow \bigcap_G^rM'.$$
\end{remark}

Next, we study some more properties of Rubin's lattice.

Let $I_H$ (resp. $I(H)$) be the kernel of the natural map $\bbZ[G] \rightarrow \bbZ[G/H]$ 
(resp. $\bbZ[H] \rightarrow \bbZ$). 
Note that $I(H) \subset I_H$.
For any $d\in\bbZ_{\geq0}$, let $Q_H^d$ (resp. $Q(H)^d$) be the $d$-th augmentation quotient $I_H^d/I_H^{d+1}$ (resp. $I(H)^d/I(H)^{d+1}$). Note that $Q_H^d$ has a natural $G/H$-module structure, since $\bbZ[G]/I_H \simeq \bbZ[G/H]$. It is known that there is a natural isomorphism of $G/H$-modules 
\begin{eqnarray}
\bbZ[G/H] \otimes_\bbZ Q(H)^d \stackrel{\sim}{\longrightarrow} Q_H^d \label{eqaug}
\end{eqnarray}
given by
$$\sigma\otimes \bar a \mapsto \overline{\widetilde \sigma a},$$
where $a \in I(H)^d$ and $\bar a$ denote the image of $a$ in $Q(H)^d$, $\widetilde \sigma \in G$ is any lift of $\sigma \in G/H$, and $\overline{\widetilde \sigma a}$ denote the image of $\widetilde \sigma a \in I_H^d$ in $Q_H^d$ ($\overline{\widetilde \sigma a}$ does not depend on the choice of $\widetilde \sigma$) (see \cite[Lemma 5.2.3 (2)]{P}). We often identify $\bbZ[G/H] \otimes_\bbZ Q(H)^d$ and $Q_H^d$.

The following lemma is well-known, and we omit the proof.

\begin{lemma} \label{hom}
For a $G$-module $M$ and an abelian group $A$, there is a natural isomorphism
$$\Hom_\bbZ(M,A) \stackrel{\sim}{\longrightarrow} \Hom_G(M,\bbZ[G]\otimes_\bbZ A) \quad ; \quad \varphi \mapsto (m \mapsto \sum_{\sigma \in G} \sigma^{-1} \otimes \varphi(\sigma m)).$$
\end{lemma}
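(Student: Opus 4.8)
The plan is to construct the isomorphism explicitly and verify it is well-defined, $G$-equivariant, and bijective. Given $\varphi \in \Hom_\bbZ(M,A)$, the candidate image is the map $\psi_\varphi : M \to \bbZ[G]\otimes_\bbZ A$ sending $m \mapsto \sum_{\sigma \in G} \sigma^{-1} \otimes \varphi(\sigma m)$. First I would check additivity in $m$, which is immediate since $\varphi$ is a homomorphism and the tensor product is $\bbZ$-bilinear. Next I would check $G$-equivariance: for $\tau \in G$, compute $\psi_\varphi(\tau m) = \sum_\sigma \sigma^{-1}\otimes\varphi(\sigma\tau m)$, and after the substitution $\sigma \mapsto \sigma\tau^{-1}$ this becomes $\sum_\sigma \tau\sigma^{-1}\otimes\varphi(\sigma m) = \tau\cdot\psi_\varphi(m)$, using that the $G$-action on $\bbZ[G]\otimes_\bbZ A$ is through the left factor $\bbZ[G]$. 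So $\psi_\varphi \in \Hom_G(M,\bbZ[G]\otimes_\bbZ A)$, and the assignment $\varphi \mapsto \psi_\varphi$ is clearly additive in $\varphi$.

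For the inverse, I would use the $\bbZ$-linear augmentation-type splitting: let $\epsilon_1 : \bbZ[G]\otimes_\bbZ A \to A$ be the map induced by picking off the coefficient of the identity element $1 \in G$, i.e. $\sum_\sigma \sigma \otimes a_\sigma \mapsto a_1$. Then define a map $\Hom_G(M,\bbZ[G]\otimes_\bbZ A) \to \Hom_\bbZ(M,A)$ by $\psi \mapsto \epsilon_1 \circ \psi$. I would then verify that these two maps are mutually inverse. In one direction, $\epsilon_1(\psi_\varphi(m))$ is the coefficient of $1$ in $\sum_\sigma \sigma^{-1}\otimes\varphi(\sigma m)$, which is $\varphi(m)$ (the term $\sigma = 1$), recovering $\varphi$. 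In the other direction, given $\psi \in \Hom_G(M,\bbZ[G]\otimes_\bbZ A)$, write $\psi(m) = \sum_\sigma \sigma \otimes a_\sigma(m)$; equivariance $\psi(\tau m) = \tau\psi(m)$ forces $a_{\tau^{-1}\sigma}(\tau m) = a_\sigma(m)$ for all $\sigma,\tau$, equivalently $a_\sigma(m) = a_1(\sigma m)$ after reindexing. Setting $\varphi = \epsilon_1\circ\psi = a_1$, we get $\psi_\varphi(m) = \sum_\sigma \sigma^{-1}\otimes a_1(\sigma m) = \sum_\sigma \sigma^{-1}\otimes a_\sigma(m) = \sum_\sigma \sigma\otimes a_{\sigma^{-1}}(m) = \psi(m)$, so the composite is the identity.

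There is no serious obstacle here; this is a standard adjunction (essentially Frobenius reciprocity / the fact that $\bbZ[G]\otimes_\bbZ -$ is right adjoint to restriction along $\bbZ \to \bbZ[G]$ composed with the self-duality $\bbZ[G]^\vee \cong \bbZ[G]$). The only point requiring a moment of care is correctly tracking the bookkeeping of $\sigma$ versus $\sigma^{-1}$ so that the formula as written in the statement matches the equivariance convention, and confirming that the $G$-action invoked on $\bbZ[G]\otimes_\bbZ A$ is the one on the $\bbZ[G]$-factor (not the bimodule structure from the Notation section, which would be relevant only if $A$ itself carried a $G$-action). Since the paper explicitly says the proof is omitted as well-known, I would present at most the explicit inverse and the two verification identities, and leave the routine bilinearity checks to the reader.
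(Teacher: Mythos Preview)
The paper omits the proof entirely, simply declaring the lemma well-known, so your explicit verification is more than the paper provides and is the standard argument. Your outline is correct, but you have the indexing error you yourself warned about in the second half. From $\psi(\tau m)=\tau\psi(m)$ one gets $a_\sigma(\tau m)=a_{\tau^{-1}\sigma}(m)$, not $a_{\tau^{-1}\sigma}(\tau m)=a_\sigma(m)$; setting $\tau=\sigma$ yields $a_\sigma(m)=a_1(\sigma^{-1}m)$, not $a_\sigma(m)=a_1(\sigma m)$. Consequently your final chain should read
\[
\psi_\varphi(m)=\sum_\sigma \sigma^{-1}\otimes a_1(\sigma m)=\sum_\sigma \sigma^{-1}\otimes a_{\sigma^{-1}}(m)=\sum_\rho \rho\otimes a_\rho(m)=\psi(m),
\]
whereas your last equality $\sum_\sigma \sigma\otimes a_{\sigma^{-1}}(m)=\psi(m)$ is false as written. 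With this correction the argument is complete.
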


\begin{lemma} \label{homin}
Let $M$ be a finitely generated $G/H$-module, and $\overline M=M/M_{\rm{tors}}$. For any $d\in\bbZ_{\geq0}$, we have an isomorphism
$$\Hom_{G/H}(M,\bbZ[G/H])\otimes_\bbZ Q(H)^d \stackrel{\sim}{\longrightarrow} \Hom_{G/H}(\overline M,Q_H^d) \quad ; \quad \varphi\otimes a \mapsto (\bar m \mapsto \varphi(m)a).$$
In particular, 
$$\Hom_{G/H}(M,\bbZ[G/H])\otimes_\bbZ Q(H)^d \longrightarrow \Hom_{G/H}( M,Q_H^d) $$
is an injection.
\end{lemma}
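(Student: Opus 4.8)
The plan is to reduce the statement to the identification \eqref{eqaug} together with Lemma \ref{hom}, applied over the quotient ring $\bbZ[G/H]$. First I would observe that, since $M$ is finitely generated and $\bbZ[G/H]$ is a free $\bbZ$-module, every element of $\Hom_{G/H}(M,\bbZ[G/H])$ kills $M_{\tors}$, so the natural restriction map $\Hom_{G/H}(M,\bbZ[G/H]) \to \Hom_{G/H}(\overline M, \bbZ[G/H])$ is an isomorphism; hence we may replace $M$ by $\overline M$ and assume $M$ is $\bbZ$-torsion-free. Next I would rewrite the target using \eqref{eqaug}:
$$\Hom_{G/H}(M,Q_H^d)\simeq \Hom_{G/H}\bigl(M,\bbZ[G/H]\otimes_\bbZ Q(H)^d\bigr).$$
Now I would apply Lemma \ref{hom} with $G$ replaced by $G/H$ and $A=Q(H)^d$, which gives a natural isomorphism
$$\Hom_\bbZ(M,Q(H)^d)\stackrel{\sim}{\longrightarrow}\Hom_{G/H}\bigl(M,\bbZ[G/H]\otimes_\bbZ Q(H)^d\bigr).$$
So it remains to produce a natural isomorphism $\Hom_{G/H}(M,\bbZ[G/H])\otimes_\bbZ Q(H)^d \stackrel{\sim}{\longrightarrow} \Hom_\bbZ(M,Q(H)^d)$ that is compatible with the explicit formula $\varphi\otimes a\mapsto(\bar m\mapsto \varphi(m)a)$ claimed in the statement.

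For that last step I would use that $M$ is a finitely generated torsion-free $\bbZ$-module, hence free, say $M\simeq\bbZ[G/H]^{\oplus?}$—more precisely, I do not want to assume $M$ is a free $\bbZ[G/H]$-module, so instead I would argue as follows. The evaluation/composition map
$$\Hom_{G/H}(M,\bbZ[G/H])\otimes_\bbZ Q(H)^d \longrightarrow \Hom_\bbZ(M,Q(H)^d),\qquad \varphi\otimes a\mapsto(m\mapsto \widehat\varphi(m)\,a),$$
where $\widehat\varphi(m)\in\bbZ$ denotes the coefficient of the identity in $\varphi(m)\in\bbZ[G/H]$ (equivalently, the composite of $\varphi$ with the augmentation $\bbZ[G/H]\to\bbZ$), is well defined and natural in $M$. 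Both sides are additive functors in $M$ that commute with finite direct sums, and for $M=\bbZ[G/H]$ both sides are canonically isomorphic to $Q(H)^d$ with the map an isomorphism (this is a direct check using Lemma \ref{hom} in the base case $\Hom_{G/H}(\bbZ[G/H],\bbZ[G/H])\simeq\bbZ[G/H]$ and $\Hom_\bbZ(\bbZ[G/H],Q(H)^d)\simeq\bbZ[G/H]\otimes_\bbZ Q(H)^d$, which over the group ring is handled by Lemma \ref{hom}). Since a finitely generated $\bbZ$-torsion-free $G/H$-module need not be $\bbZ[G/H]$-free, I would instead present it via a finite presentation by free $\bbZ[G/H]$-modules $\bbZ[G/H]^b\to\bbZ[G/H]^a\to M\to 0$ and use that $\Hom_{G/H}(-,\bbZ[G/H])\otimes_\bbZ Q(H)^d$ and $\Hom_\bbZ(-,Q(H)^d)$ are both left exact (the latter because $Q(H)^d$ need not be injective, but $\Hom_\bbZ(-,A)$ is left exact for any $A$, and $-\otimes_\bbZ Q(H)^d$ is exact only if $Q(H)^d$ is flat—which it is not in general).

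Here is the main obstacle I anticipate: $Q(H)^d$ is a finite abelian group in the interesting cases, so it is neither flat nor injective over $\bbZ$, and the two functors above are only left exact, not exact; so a naive "present $M$ and chase the diagram" argument does not immediately force the map to be an isomorphism from the free case alone. The clean fix, which I would adopt, is to avoid presentations entirely: since $M$ is finitely generated and $\bbZ$-torsion-free, the natural map $M\to\Hom_\bbZ(\Hom_\bbZ(M,\bbZ),\bbZ)$ combined with the averaging formula of Lemma \ref{hom} lets one write $\Hom_{G/H}(M,\bbZ[G/H])$ as $\Hom_\bbZ(M,\bbZ)$ functorially, and then
$$\Hom_{G/H}(M,\bbZ[G/H])\otimes_\bbZ Q(H)^d\simeq \Hom_\bbZ(M,\bbZ)\otimes_\bbZ Q(H)^d\stackrel{\sim}{\longrightarrow}\Hom_\bbZ(M,Q(H)^d),$$
where the last arrow is an isomorphism precisely because $M$ is finitely generated free over $\bbZ$ (so $\Hom_\bbZ(M,\bbZ)$ is finitely generated free over $\bbZ$ and tensoring a finite free module with $Q(H)^d$ computes $\Hom$ correctly). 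Unwinding the composite against the formula in Lemma \ref{hom} yields exactly the asserted map $\varphi\otimes a\mapsto(\bar m\mapsto\varphi(m)a)$, and injectivity of the last displayed map in the lemma is then immediate since it factors through the isomorphism $\Hom_{G/H}(M,\bbZ[G/H])\otimes_\bbZ Q(H)^d\stackrel{\sim}{\to}\Hom_{G/H}(\overline M,Q_H^d)$ followed by the injection $\Hom_{G/H}(\overline M,Q_H^d)\hookrightarrow\Hom_{G/H}(M,Q_H^d)$ induced by the surjection $M\twoheadrightarrow\overline M$ (which is injective on Hom because $Q_H^d$, being a $G/H$-module pulled back along $\bbZ[G/H]\to\bbZ[G/H]/I_H^{?}$, detects the torsion-free quotient). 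I expect the only genuinely delicate point is bookkeeping the identification \eqref{eqaug} so that the isomorphism comes out as the stated explicit formula rather than up to an automorphism of $Q(H)^d$; everything else is formal.
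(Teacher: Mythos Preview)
Your proposal is correct, and the ``clean fix'' you eventually adopt---identifying $\Hom_{G/H}(M,\bbZ[G/H])\simeq\Hom_\bbZ(M,\bbZ)$ via Lemma~\ref{hom} and then using that $\overline M$ is finitely generated free over $\bbZ$ so that $\Hom_\bbZ(\overline M,\bbZ)\otimes_\bbZ Q(H)^d\simeq\Hom_\bbZ(\overline M,Q(H)^d)$---is exactly the paper's argument, which is packaged there as a single commutative square with vertical isomorphisms given by Lemma~\ref{hom}. The earlier detours through free presentations and the augmentation coefficient $\widehat\varphi$ are unnecessary and can be dropped.
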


\begin{proof}
We have a commutative diagram:
\[\xymatrix{
\Hom_{G/H}(M,\bbZ[G/H]) \otimes_\bbZ Q(H)^d \ar[d] \ar[r] & \Hom_{G/H}(\overline M,Q_H^d)  \ar[d] \\
\Hom_\bbZ(M,\bbZ) \otimes_\bbZ Q(H)^d  \ar[r] & \Hom_\bbZ(\overline M,Q(H)^d), \\
}\] 
where the bottom horizontal arrow is given by $\varphi \otimes a \mapsto (\bar m \mapsto \varphi(m)a)$, and the left and right vertical arrows are the isomorphisms given in Lemma \ref{hom} (note that we have a natural isomorphism $Q_H^d \simeq \bbZ[G/H]\otimes_\bbZ Q(H)^d$, see (\ref{eqaug})). The bottom horizontal arrow is an isomorphism, since $\Hom_\bbZ(M,\bbZ) \simeq \Hom_\bbZ(\overline M,\bbZ)$ and $\overline M$ is torsion-free by definition. Hence the upper horizontal arrow is also bijective.
\end{proof}


\begin{definition}
A finitely generated $G$-module $M$ is called a $G$-lattice if $M$ is torsion-free.
\end{definition}

For example, for a finitely generated $G$-module $M$, $\Hom_G(M,\bbZ[G])$ is a $G$-lattice. Rubin's lattice $\bigcap_G^rM$ is also a $G$-lattice.

\begin{proposition} \label{propphi}
Let $M$ be a $G/H$-lattice, and $r, d \in \bbZ_{\geq0}$ such that $r\geq d$. Then an element $\Phi \in \bigwedge_{G/H}^d\Hom_{G/H}(M, Q_H^1)$ induces a $G/H$-homomorphism
$$\bigcap_{G/H}^rM \longrightarrow (\bigcap_{G/H}^{r-d}M)\otimes_{G/H}Q_H^d(\simeq (\bigcap_{G/H}^{r-d}M)\otimes_\bbZ Q(H)^d).$$
\end{proposition}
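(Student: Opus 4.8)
The plan is to reduce the statement to the case of $\bbZ[G/H]$-valued homomorphisms, where the corresponding fact is standard. Write $\Gamma=G/H$ throughout, and note that by (\ref{eqaug}) the sum $\bigoplus_{e\ge0}Q_H^e$ is a commutative graded $\Gamma$-algebra whose degree-$e$ piece is $Q_H^e\simeq\bbZ[\Gamma]\otimes_\bbZ Q(H)^e$, the multiplication being that of $\bbZ[\Gamma]$ on the $\bbZ[\Gamma]$-factors and that of the graded ring $\bigoplus_e Q(H)^e$ on the $Q(H)$-factors.

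Since $M$ is a $\Gamma$-lattice, $\overline M=M$, so Lemma \ref{homin} with $d=1$ identifies $\Hom_\Gamma(M,Q_H^1)$ with $\Hom_\Gamma(M,\bbZ[\Gamma])\otimes_\bbZ Q(H)^1$, the element $\psi\otimes a$ corresponding to $m\mapsto\psi(m)a$. Hence every $\Phi\in\bigwedge_\Gamma^d\Hom_\Gamma(M,Q_H^1)$ is a $\bbZ$-linear combination of decomposable elements $(\psi_1\otimes a_1)\wedge\cdots\wedge(\psi_d\otimes a_d)$ with $\psi_i\in\Hom_\Gamma(M,\bbZ[\Gamma])$ and $a_i\in Q(H)^1$. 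Because (\ref{extmap2}), applied to the algebra $\bigoplus_e Q_H^e$, is $\bbZ$-linear in its argument, and because a $\Phi$ all of whose factors have degree $1$ yields a map with image in the degree-$d$ piece $(\bigwedge_\Gamma^{r-d}M)\otimes_\Gamma Q_H^d$, it suffices to treat one decomposable $\Phi$.

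So fix $\Phi=(\psi_1\otimes a_1)\wedge\cdots\wedge(\psi_d\otimes a_d)$, and let $f\colon\bigwedge_\Gamma^rM\to\bigwedge_\Gamma^{r-d}M$ be the homomorphism induced by $\psi_1\wedge\cdots\wedge\psi_d\in\bigwedge_\Gamma^d\Hom_\Gamma(M,\bbZ[\Gamma])$ via (\ref{extmap}). I claim that the homomorphism $\bigwedge_\Gamma^rM\to(\bigwedge_\Gamma^{r-d}M)\otimes_\Gamma Q_H^d\simeq(\bigwedge_\Gamma^{r-d}M)\otimes_\bbZ Q(H)^d$ induced by $\Phi$ via (\ref{extmap2}) is
$$x\longmapsto f(x)\otimes(a_1\cdots a_d),$$
where $a_1\cdots a_d\in Q(H)^d$ is the product in $\bigoplus_e Q(H)^e$. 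This is a direct unwinding of the constructions of \S\ref{ext}: for a single $\psi\otimes a$ the map $\bigwedge_\Gamma^kM\to(\bigwedge_\Gamma^{k-1}M)\otimes_\Gamma Q_H^1$ sends $m_1\wedge\cdots\wedge m_k$ to $\sum_i(-1)^{i-1}(m_1\wedge\cdots\wedge\widehat{m_i}\wedge\cdots\wedge m_k)\otimes\psi(m_i)a$, and moving the scalar $\psi(m_i)\in\bbZ[\Gamma]$ across the tensor product (using $Q_H^1\simeq\bbZ[\Gamma]\otimes_\bbZ Q(H)^1$) rewrites this as $\bigl(\sum_i(-1)^{i-1}\psi(m_i)\,m_1\wedge\cdots\wedge\widehat{m_i}\wedge\cdots\wedge m_k\bigr)\otimes a$, i.e.\ as (the map induced by $\psi$ via (\ref{extmap}))$\,\otimes\,a$. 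Iterating $d$ times, the induced maps compose to $f$ while the elements $a_i$ multiply in $\bigoplus_e Q(H)^e$; commutativity of the latter makes the signs compatible with the antisymmetry in the factors $\psi_i\otimes a_i$, so the displayed formula does describe the map attached to $\Phi$.

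Finally, it is standard (cf.\ Rubin \cite[\S1.2]{R2}) that $f$ carries $\bigcap_\Gamma^rM$ into $\bigcap_\Gamma^{r-d}M$. Indeed, for $x\in\bigcap_\Gamma^rM$ and $\Xi\in\bigwedge_\Gamma^{r-d}\Hom_\Gamma(M,\bbZ[\Gamma])$, the description of (\ref{extmap}) as an iterated composition (Remark \ref{rlat}) gives $\iota(\Xi)\circ f=\iota(\psi_1\wedge\cdots\wedge\psi_d\wedge\Xi)$, and $(\psi_1\wedge\cdots\wedge\psi_d\wedge\Xi)(x)\in\bbZ[\Gamma]$ since $\psi_1\wedge\cdots\wedge\psi_d\wedge\Xi\in\bigwedge_\Gamma^r\Hom_\Gamma(M,\bbZ[\Gamma])$ and $x\in\bigcap_\Gamma^rM$; hence $f(x)\in\bigcap_\Gamma^{r-d}M$. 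Combined with the previous step, $\Phi$ sends $\bigcap_\Gamma^rM$ into $(\bigcap_\Gamma^{r-d}M)\otimes_\bbZ Q(H)^d=(\bigcap_\Gamma^{r-d}M)\otimes_\Gamma Q_H^d$, as claimed. The step needing genuine care is the middle one: identifying the induced map precisely requires juggling the identifications coming from (\ref{eqaug}) and justifying the interchange of (\ref{extmap2}) with the $\bbZ$-linear decomposition of $\Phi$ (exterior powers of tensor products being less naive than one might hope); the rest is formal.
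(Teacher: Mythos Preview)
Your proof is correct and follows essentially the same approach as the paper's: both invoke the graded algebra $\bigoplus_e Q_H^e$ to get the map on $\bigwedge_\Gamma^r M$ via (\ref{extmap2}), reduce to a decomposable $\Phi$ whose factors are simple tensors $\psi_i\otimes a_i$ (using Lemma~\ref{homin}), and identify the resulting map as $m\mapsto\Psi(m)\otimes a_1\cdots a_d$ with $\Psi=\psi_1\wedge\cdots\wedge\psi_d$. Your write-up is more explicit than the paper's in two places---you actually verify that the map induced by $\Phi$ agrees with $f\otimes(a_1\cdots a_d)$ on the ordinary exterior power, and you spell out via the identity $\iota(\Xi)\circ f=\iota(\psi_1\wedge\cdots\wedge\psi_d\wedge\Xi)$ why $f$ carries $\bigcap_\Gamma^r M$ into $\bigcap_\Gamma^{r-d} M$---whereas the paper simply asserts these steps.
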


\begin{proof}
Note that $Q_H^1$ is the degree-$1$-part of the graded $G/H$-algebra $\bigoplus_{i\geq0}Q_H^i$. We apply (\ref{extmap2}) to know that $\Phi$ induces the $G/H$-homomorphism
\begin{eqnarray}
\bigwedge_{G/H}^rM \longrightarrow (\bigwedge_{G/H}^{r-d}M)\otimes_{G/H}Q_H^d. \label{phimap}
\end{eqnarray}
We extend this map to Rubin's lattice $\bigcap_{G/H}^rM$. We may assume that there exist $\varphi_1,\ldots, \varphi_d \in \Hom_{G/H}(M,Q_H^1)$ such that $\Phi=\varphi_1 \wedge \cdots \wedge \varphi_d$. Moreover, by Lemma \ref{homin}, we may assume for each $1 \leq i \leq d$ that there exist $\psi_i \in \Hom_{G/H}(M,\bbZ[G/H])$ and $a_i \in Q(H)^1$ such that $\varphi_i=\psi_i(\cdot)a_i$. Put $\Psi=\psi_1 \wedge \cdots \wedge \psi_d \in \bigwedge_{G/H}^d \Hom_{G/H}(M,\bbZ[G/H])$. By the definition of Rubin's lattice, $\Phi$ induces a $G/H$-homomorphism
$$\bigcap_{G/H}^r M \longrightarrow (\bigcap_{G/H}^{r-d}M) \otimes_\bbZ Q(H)^d \quad ; \quad m \mapsto \Psi(m) \otimes a_1 \cdots a_d.$$
This extends the map (\ref{phimap}).
\end{proof} 

The following definition is due to \cite[\S 2.1]{B}.

\begin{definition}
Let $M$ be a $G$-lattice. For $\varphi \in \Hom_G(M,\bbZ[G])$, we define $\varphi^H \in \Hom_{G/H}(M^H,\bbZ[G/H])$ by 
$$M^H \stackrel{\varphi}{\longrightarrow} \bbZ[G]^H \stackrel{\sim}{\longrightarrow} \bbZ[G/H],$$
where the last isomorphism is given by $\N_H \mapsto 1$. Similarly, for $\Phi \in \bigwedge_G^r\Hom_G(M,\bbZ[G])$ ($r\in\bbZ_{\geq0}$), $\Phi^H \in \bigwedge_{G/H}^r\Hom_{G/H}(M^H,\bbZ[G/H])$ is defined. (If $r=0$, we define $\Phi^H \in \bbZ[G/H]$ to be the image of $\Phi \in \bbZ[G]$ under the natural map.)
\end{definition}

\begin{remark} \label{Hrem}
It is easy to see that 
$$\varphi^H =\sum_{\sigma \in G/H}\varphi^1(\sigma ( \ \cdot \ ))\sigma^{-1},$$
where $\varphi^1 \in \Hom_\bbZ(M,\bbZ)$ corresponds to $\varphi \in \Hom_G(M,\bbZ[G])$ (see Lemma \ref{hom}). If $r \geq 1$, then one also sees that 
\begin{eqnarray}
\Phi(m)=\Phi^H(\N_H^r m) \quad \text{in} \quad \bbZ[G/H] \label{eqphi}
\end{eqnarray}
for all $\Phi \in \bigwedge_G^r \Hom_G(M,\bbZ[G])$ and $m \in \bigcap_G^r M$. 
\end{remark}

\begin{lemma} \label{surj}
If $M$ is a $G$-lattice, then the map
$$\Hom_G(M,\bbZ[G]) \longrightarrow \Hom_{G/H}(M^H,\bbZ[G/H]) \quad ; \quad \varphi \mapsto \varphi^H$$
is surjective.
\end{lemma}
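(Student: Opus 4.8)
The plan is to reduce the statement to the corresponding (and essentially trivial) fact about $\bbZ$-linear functionals, via the two instances of Lemma \ref{hom} for the groups $G$ and $G/H$, together with the explicit description of $\varphi \mapsto \varphi^H$ recorded in Remark \ref{Hrem}. Recall that if $\varphi^1 \in \Hom_\bbZ(M,\bbZ)$ is the functional corresponding to $\varphi \in \Hom_G(M,\bbZ[G])$ under Lemma \ref{hom}, then $\varphi^H = \sum_{\sigma \in G/H} \varphi^1(\sigma(\,\cdot\,))\sigma^{-1}$ on $M^H$. But this last expression is precisely the image of $\varphi^1|_{M^H} \in \Hom_\bbZ(M^H,\bbZ)$ under the isomorphism $\Hom_\bbZ(M^H,\bbZ) \stackrel{\sim}{\longrightarrow} \Hom_{G/H}(M^H,\bbZ[G/H])$ of Lemma \ref{hom} (applied to the group $G/H$ and the $G/H$-module $M^H$). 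Hence, under the vertical isomorphisms of Lemma \ref{hom}, the map $\varphi \mapsto \varphi^H$ is identified with the restriction map $\Hom_\bbZ(M,\bbZ) \to \Hom_\bbZ(M^H,\bbZ)$, and it is enough to show that this restriction map is surjective.

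The crux is therefore to show that every $\bbZ$-linear functional on $M^H$ extends to one on $M$. This holds because $M^H$ is \emph{saturated} in $M$: if $m \in M$ satisfies $nm \in M^H$ for some integer $n \geq 1$, then $n(hm - m) = 0$ for every $h \in H$, and since $M$ is a $G$-lattice, hence $\bbZ$-torsion-free, we conclude $hm = m$, i.e.\ $m \in M^H$. Consequently $M/M^H$ is finitely generated and torsion-free, hence free, so the exact sequence $0 \to M^H \to M \to M/M^H \to 0$ splits as abelian groups and restriction $\Hom_\bbZ(M,\bbZ) \to \Hom_\bbZ(M^H,\bbZ)$ is surjective.

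Putting it together: given $\bar\varphi \in \Hom_{G/H}(M^H,\bbZ[G/H])$, let $\bar\psi \in \Hom_\bbZ(M^H,\bbZ)$ be the functional corresponding to $\bar\varphi$ under Lemma \ref{hom}, extend $\bar\psi$ to some $\psi \in \Hom_\bbZ(M,\bbZ)$, and let $\varphi \in \Hom_G(M,\bbZ[G])$ correspond to $\psi$. Then $\varphi^1 = \psi$, so by Remark \ref{Hrem}, for $m \in M^H$ we get $\varphi^H(m) = \sum_{\sigma \in G/H} \psi(\sigma m)\sigma^{-1} = \sum_{\sigma \in G/H} \bar\psi(\sigma m)\sigma^{-1} = \bar\varphi(m)$, where we used $\sigma m \in M^H$. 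Hence $\varphi^H = \bar\varphi$, proving surjectivity.

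The only step with any real content is the saturation of $M^H$ in $M$, which is exactly where the $G$-lattice hypothesis enters; the rest is bookkeeping with Lemma \ref{hom}. If one prefers not to invoke Remark \ref{Hrem}, an equivalent route is to compute $\varphi^H$ directly from its definition: group the sum $\varphi(m) = \sum_{\sigma \in G} \sigma^{-1}\psi(\sigma m)$ over cosets of $H$, use $hm = m$ for $m \in M^H$ to factor out $\N_H$, and then apply the isomorphism $\bbZ[G]^H \stackrel{\sim}{\longrightarrow} \bbZ[G/H]$, $\N_H \mapsto 1$.
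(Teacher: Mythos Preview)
Your proof is correct and follows essentially the same approach as the paper: both reduce via Remark \ref{Hrem} (and Lemma \ref{hom}) to the surjectivity of the restriction map $\Hom_\bbZ(M,\bbZ) \to \Hom_\bbZ(M^H,\bbZ)$, and then verify this by showing $M/M^H$ is torsion-free using exactly the same saturation argument. Your write-up is simply more explicit about the bookkeeping (spelling out the splitting and the final construction of the preimage), but the mathematical content is identical.
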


\begin{proof}
By Remark \ref{Hrem}, what we have to prove is that the restriction map
$$\Hom_\bbZ(M,\bbZ) \longrightarrow \Hom_\bbZ(M^H,\bbZ)$$
is surjective. Therefore, it is sufficient to prove that $M/M^H$ is torsion-free. Take $m\in M$ such that $nm \in M^H$ for a nonzero $n\in\bbZ$. For any $\sigma\in H$, we have
$$n((\sigma-1)m)=(\sigma-1)nm=0.$$
Since $M$ is a $G$-lattice, it is torsion-free. Therefore, we have $(\sigma-1)m=0$. This implies $m \in M^H$.
\end{proof}

\begin{lemma} \label{inj}
Let $M$ be a $G$-lattice, and $r, d \in \bbZ_{\geq0}$. Then there is a canonical injection
$$i: \bigcap_{G/H}^r M^H \longrightarrow \bigcap_{G}^r M.$$
Furthermore, the maps
$$(\bigcap_{G/H}^rM^H) \otimes_\bbZ Q(H)^d \stackrel{i}{\longrightarrow}( \bigcap_G^rM )\otimes_\bbZ Q(H)^d \longrightarrow (\bigcap_G^rM)\otimes_\bbZ \bbZ[H]/I(H)^{d+1}$$
are both injective, where the first arrow is induced by $i$, and the second by the inclusion $Q(H)^d \hookrightarrow \bbZ[H]/I(H)^{d+1}$.
\end{lemma}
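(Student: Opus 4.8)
The plan is to build $i$ so as to control $\Phi(i(x))$ for every $\Phi\in\bigwedge_G^r\Hom_G(M,\bbZ[G])$, since by Remark \ref{rlat} both Rubin lattices are recovered from such evaluations; concretely I will arrange that $\Phi(i(x))$ is the image of $\Phi^H(x)\in\bbZ[G/H]$ under the canonical injection $\rho\colon\bbZ[G/H]\xrightarrow{\sim}\bbZ[G]^H\hookrightarrow\bbZ[G]$ sending $\N_H\mapsto1$. The case $r=0$ is immediate: there $\bigcap_{G/H}^0M^H=\bbZ[G/H]$ and $\bigcap_G^0M=\bbZ[G]$, and I take $i=\rho$; since $\bbZ[G]^H=\bbZ[G]\cap\bbQ[G]^H$ is saturated in $\bbZ[G]$, the cokernel of $\rho$ is $\bbZ$-torsion-free, so $\rho$ stays injective after $\otimes\,Q(H)^d$, while the second map is injective because $\bbZ[G]$ is $\bbZ$-flat and $Q(H)^d\hookrightarrow\bbZ[H]/I(H)^{d+1}$.

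Assume now $r\geq1$. The inclusion $M^H\hookrightarrow M$ is $G$-equivariant and each $\sigma\in G$ acts on $M^H$ through its image in $G/H$, so $m_1\wedge\cdots\wedge m_r\mapsto m_1\wedge\cdots\wedge m_r$ is a well-defined $\bbZ$-linear map $j\colon\bigwedge_{G/H}^rM^H\to\bigwedge_G^rM$. Let $e=\tfrac1{|H|}\N_H\in\bbQ[G]$, the idempotent with $e(N\otimes\bbQ)=N^H\otimes\bbQ$ for any $G$-module $N$; since $\bbQ[G]$ is semisimple with $e\bbQ[G]\cong\bbQ[G/H]$, taking $\bigwedge^r$ over the ring decomposition $\bbQ[G]=e\bbQ[G]\times(1-e)\bbQ[G]$ identifies $(\bigwedge_{G/H}^rM^H)\otimes\bbQ$ with $e\cdot(\bigwedge_G^rM\otimes\bbQ)$ compatibly with $j$; in particular $j\otimes\bbQ$ is injective. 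The technical core is the identity
$$\Phi\bigl(j(x)\bigr)=|H|^{r-1}\,\rho\bigl(\Phi^H(x)\bigr)\qquad\text{for }x\in(\textstyle\bigwedge_{G/H}^rM^H)\otimes\bbQ,\ \Phi\in\textstyle\bigwedge_G^r\Hom_G(M,\bbZ[G]),$$
which I prove by reducing (both sides being additive in each variable) to $x=m_1\wedge\cdots\wedge m_r$ with $m_i\in M^H$ and $\Phi=\varphi_1\wedge\cdots\wedge\varphi_r$: then $\Phi(j(x))=\det(\varphi_i(m_j))$, each entry $\varphi_i(m_j)$ lies in $\bbZ[G]^H$ and equals $\rho(\varphi_i^H(m_j))$, and expanding the determinant using $\rho(\alpha)\rho(\beta)=|H|\,\rho(\alpha\beta)$ (a consequence of $\N_H^2=|H|\N_H$) yields $\det(\varphi_i(m_j))=|H|^{r-1}\rho(\det(\varphi_i^H(m_j)))=|H|^{r-1}\rho(\Phi^H(x))$. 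I then \emph{define} $i:=\tfrac1{|H|^{r-1}}(j\otimes\bbQ)$ restricted to $\bigcap_{G/H}^rM^H$; by the identity, $\Phi(i(x))=\rho(\Phi^H(x))\in\bbZ[G]$ for all $\Phi$ whenever $x\in\bigcap_{G/H}^rM^H$ (as then $\Phi^H(x)\in\bbZ[G/H]$), so $i$ indeed maps into $\bigcap_G^rM$, and it is injective since its source is $\bbZ$-torsion-free and $j\otimes\bbQ$ is injective (this already handles $d=0$).

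For general $d$ it suffices, as for $r=0$, to know that $\mathrm{coker}(i)$ is $\bbZ$-torsion-free and that $\bigcap_G^rM$ is $\bbZ$-flat; the latter holds because $\bigcap_G^rM$ is a $G$-lattice. For the former I show $i$ is saturated, i.e.\ $i(\bigcap_{G/H}^rM^H)=(\bigcap_G^rM)\cap\mathrm{Im}(j\otimes\bbQ)$ inside $(\bigwedge_G^rM)\otimes\bbQ$. One inclusion is clear; for the other, write an element $w$ of the intersection as $w=\tfrac1{|H|^{r-1}}(j\otimes\bbQ)(x)$ with $x\in(\bigwedge_{G/H}^rM^H)\otimes\bbQ$, so $w=i(x)$ once $x\in\bigcap_{G/H}^rM^H$; for any $\Phi^H$ pick $\Psi\in\bigwedge_G^r\Hom_G(M,\bbZ[G])$ with $\Psi^H=\Phi^H$ (possible since $\varphi\mapsto\varphi^H$ is surjective by Lemma \ref{surj}, hence so is its $r$-th exterior power), and then $\bbZ[G]\ni\Psi(w)=\Psi(i(x))=\rho(\Phi^H(x))$ forces $\Phi^H(x)\in\bbZ[G/H]$ because $\rho(\bbZ[G/H])=\bbZ[G]\cap\bbQ[G]^H$. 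Saturation makes $\bigcap_G^rM/i(\bigcap_{G/H}^rM^H)$ a submodule of a $\bbQ$-vector space, hence $\bbZ$-torsion-free, so tensoring $0\to\bigcap_{G/H}^rM^H\to\bigcap_G^rM\to\mathrm{coker}(i)\to0$ with $Q(H)^d$ keeps the first arrow injective, and tensoring $0\to Q(H)^d\to\bbZ[H]/I(H)^{d+1}$ with the $\bbZ$-flat module $\bigcap_G^rM$ keeps the second arrow injective.

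The main obstacle is pinning down the correct normalization of $i$: the naive inclusion-induced map $j$ itself is \emph{not} adequate once $r\geq2$ and $H\neq1$, since its cokernel then has $\bbZ$-torsion (it surjects onto $(\bigcap_G^rM\cap\mathrm{Im}(j\otimes\bbQ))/|H|^{r-1}$), which would destroy injectivity after $\otimes\,Q(H)^d$; the factor $|H|^{1-r}$ is precisely what is needed, and establishing the evaluation identity $\Phi(j(x))=|H|^{r-1}\rho(\Phi^H(x))$ together with the saturation of the renormalized $i$ is where the real work lies.
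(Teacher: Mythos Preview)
Your argument is correct, and the map $i$ you construct agrees with the paper's (your formula $i=\tfrac{1}{|H|^{r-1}}(j\otimes\bbQ)$ immediately recovers the property $i(\N_H^r m)=\N_H m$ of Remark~\ref{reminj}, since $\N_H^r=|H|^{r-1}\N_H$), but the route is genuinely different. The paper works entirely on the dual side: writing $\iota$ and $\iota_H$ for the maps of Remark~\ref{rlat}, it observes that $\Phi\mapsto\Phi^H$ induces a surjection $\kappa\colon\Im\iota\to\Im\iota_H$, and then $i$ is simply the map on $\Hom$'s obtained by precomposing with $\kappa$ (together with the adjunction $\Hom_{G/H}(-,\bbZ[G/H])\simeq\Hom_G(-,\bbZ[G])$ of Lemma~\ref{hom}); torsion-freeness of the cokernel falls out immediately because $\Coker(i)\hookrightarrow\Hom_G(\Ker\kappa,\bbZ[G])$. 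Your approach instead builds $i$ explicitly inside $(\bigwedge_G^rM)\otimes\bbQ$ via the evaluation identity $\Phi(j(x))=|H|^{r-1}\rho(\Phi^H(x))$ and then proves saturation by hand. The paper's argument is shorter and avoids any computation with $\N_H$ or idempotents, while yours has the advantage of making the relationship between $i$ and the naive inclusion $j$ completely transparent and of giving a concrete description of $i$ that can be checked against later formulas.
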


\begin{proof}
Let 
$$\iota : \bigwedge_G^r\Hom_G(M,\bbZ[G]) \longrightarrow \Hom_G(\bigwedge_G^rM,\bbZ[G])$$
and 
$$\iota_H : \bigwedge_{G/H}^r\Hom_{G/H}(M^H,\bbZ[G/H]) \longrightarrow \Hom_{G/H}(\bigwedge_{G/H}^rM^H,\bbZ[G/H])$$
be the maps in Remark \ref{rlat}. It is easy to see that the map 
$$\kappa : \Im\iota \longrightarrow \Im\iota_H \quad ; \quad \iota(\Phi) \mapsto \iota_H(\Phi^H) $$
is well-defined. By Lemma \ref{surj}, the map 
$$\bigwedge_G^r\Hom_G(M,\bbZ[G]) \longrightarrow \bigwedge_{G/H}^r\Hom_{G/H}(M^H,\bbZ[G/H]) \quad ; \quad \Phi \mapsto \Phi^H$$
is surjective. So the map $\kappa$ is also surjective. Hence, by Remark \ref{rlat}, we have an injection 
$$i:\bigcap_{G/H}^rM^H \longrightarrow \bigcap_{G}^rM$$
(note that $\Hom_{G/H}(\Im \iota_H, \bbZ[G/H]) \simeq \Hom_{G}(\Im \iota_H,\bbZ[G])$ by Lemma \ref{hom}). 
The cokernel of this map is isomorphic to a submodule of $\Hom_G(\Ker \kappa,\bbZ[G])$, so it is torsion-free. Hence the map 
$$i:(\bigcap_{G/H}^rM^H) \otimes_\bbZ Q(H)^d \longrightarrow( \bigcap_G^rM) \otimes_\bbZ Q(H)^d$$
is injective. The injectivity of the map
$$(\bigcap_G^rM )\otimes_\bbZ Q(H)^d \longrightarrow (\bigcap_G^rM)\otimes_\bbZ \bbZ[H]/I(H)^{d+1}$$
follows from the fact that $\bigcap_G^rM$ is torsion-free.
\end{proof}

\begin{remark} \label{reminj}
The canonical injection $i:\bigcap_{G/H}^rM^H\hookrightarrow\bigcap_G^rM$ constructed above does not coincide in general with the map induced by the inclusion $M^H\hookrightarrow M$. In fact, if $r \geq 1$, then we have 
$$i(\N_H^r m)=\N_H m$$
for all $m \in \bigcap_G^r M$. 
\end{remark}


\begin{definition} \label{defnorm}
Let $M$ be a $G$-lattice, and $r,d\in\bbZ_{\geq0}$. When $r\geq1$, we define the $d$-th norm 
$$\N_H^{(r,d)} : \bigcap_G^rM \longrightarrow( \bigcap_G^rM) \otimes_\bbZ\bbZ[H]/I(H)^{d+1}$$
by 
$$\N_H^{(r,d)}(m)=\sum_{\sigma \in H}\sigma m\otimes \sigma^{-1}.$$
When $r=0$, we define 
$$\N_H^{(0,d)} : \bbZ[G]\longrightarrow \bbZ[G]/I_H^{d+1}$$
to be the natural map.
\end{definition}

\begin{remark} \label{remnorm}
The $0$-th norm is the usual norm :
$$\N_H^{(r,0)}=
\begin{cases}
\N_H &\text{if $r\geq1$,} \\
\bbZ[G] \longrightarrow \bbZ[G/H] &\text{if $r=0$.}
\end{cases}$$
\end{remark}

\begin{proposition} \label{propnorm}
Let $M$ be a $G$-lattice, $r, d \in \bbZ_{\geq 0}$, and $m \in \bigcap_G^r M$. Assume 
$$\N_H^{(r,d)}(m) \in \Im i,$$
where, in the case $r\geq1$, $i: (\bigcap_{G/H}^r M^H) \otimes_\bbZ Q(H)^d \to (\bigcap_G^rM)\otimes_\bbZ \bbZ[H]/I(H)^{d+1}$ is defined to be the injection in Lemma \ref{inj}, and in the case $r=0$, $i: Q_H^d \hookrightarrow \bbZ[G]/I_H^{d+1}$ to be the inclusion. If $d=0$ or $r=0$ or $1$, then we have 
$$\Phi(m)=\Phi^H(i^{-1}(\N_H^{(r,d)}(m))) \quad \text{in} \quad Q_H^d$$
for all $\Phi \in \bigwedge_G^r\Hom_G(M,\bbZ[G])$.
\end{proposition}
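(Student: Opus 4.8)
\medskip
\noindent\emph{Proof strategy.}\quad
The cases where $r=0$ or $d=0$ are essentially formal, so I dispose of them first. If $r=0$, then $\Phi\in\bbZ[G]$ and both sides of the asserted identity are the image of the product $\Phi\,m$ in $\bbZ[G]/I_H^{d+1}$, read inside $Q_H^d$; the equality is just the compatibility of the $\bbZ[G/H]$-action on $Q_H^d$ with $\bbZ[G]\to\bbZ[G/H]$. If $d=0$ and $r\ge1$, the hypothesis holds automatically: by Remark~\ref{reminj}, $\N_H^{(r,0)}(m)=\N_H m=i(\N_H^r m)$, so by injectivity of $i$ one gets $i^{-1}(\N_H^{(r,0)}(m))=\N_H^r m$, and the assertion is precisely identity $(\ref{eqphi})$ of Remark~\ref{Hrem}. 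Hence the content lies in the case $r=1$, $d\ge1$, which occupies the rest. Fix coset representatives once and for all, write $\widetilde h\in G$ for a chosen lift of $\bar h\in G/H$, write $\overline{\phantom{x}}$ for reduction modulo the relevant power of $I(H)$ or of $I_H$, and write $j\colon\bbZ[G]^H\xrightarrow{\ \sim\ }\bbZ[G/H]$, $\N_H g\mapsto\bar g$, for the isomorphism used to define $\varphi^H$.

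Since elements of $\bigcap_G^1M$ are separated by $\Hom_G(M,\bbZ[G])$, and $\Phi$ here ranges over $\bigwedge_G^1\Hom_G(M,\bbZ[G])=\Hom_G(M,\bbZ[G])$, it is enough to fix $\varphi\in\Hom_G(M,\bbZ[G])$ and prove $\varphi(m)\in I_H^d$ together with $\varphi(m)\equiv\varphi^H\bigl(i^{-1}(\N_H^{(1,d)}(m))\bigr)\pmod{I_H^{d+1}}$. Put $y:=i^{-1}(\N_H^{(1,d)}(m))=\sum_k x_k\otimes a_k\in(\bigcap_{G/H}^1M^H)\otimes_\bbZ Q(H)^d$. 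Applying $\varphi\otimes\Id$ to the hypothesis $\N_H^{(1,d)}(m)=i(y)$ and using $\bbZ[G]$-linearity of $\varphi$ gives
$$\sum_{\sigma\in H}\sigma\,\varphi(m)\otimes\sigma^{-1}=\varphi(i(y))\qquad\text{in }\bbZ[G]\otimes_\bbZ\bbZ[H]/I(H)^{d+1}.$$
Two preliminary remarks. First, since $\bigcap_G^1M$ is $\bbZ$-free, $\Im i$ is contained in $(\bigcap_G^1M)\otimes_\bbZ(I(H)^d/I(H)^{d+1})$, so the hypothesis forces $\N_H^{(1,d-1)}(m)=0$; writing $\varphi(m)=\sum_{\bar h\in G/H}b_{\bar h}\,\widetilde h$ with $b_{\bar h}\in\bbZ[H]$ and using $\bbZ[G]=\bigoplus_{\bar h}\bbZ[H]\widetilde h$, this vanishing is equivalent (coset by coset, via consequence (a) of the lemma below) to $b_{\bar h}\in I(H)^d$ for every $\bar h$, hence to $\varphi(m)\in I_H^d$. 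Second, unwinding the construction of $i$ in Lemma~\ref{inj} — it is dual, via Remark~\ref{rlat} and the identification $\Hom_{G/H}(-,\bbZ[G/H])\simeq\Hom_G(-,\bbZ[G])$ of Lemma~\ref{hom}, to the surjection $\varphi'\mapsto(\varphi')^H$ — one gets $\varphi(i(x))=j^{-1}(\varphi^H(x))$ for $x\in\bigcap_{G/H}^1M^H$; consequently $\varphi(i(y))=\sum_k j^{-1}(\varphi^H(x_k))\otimes a_k$, which lies in $\bbZ[G]^H\otimes_\bbZ Q(H)^d$.

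The crux is the following group-ring identity: for a finite abelian group $H$, any $n\ge0$, and any $b\in\bbZ[H]$,
$$\sum_{\sigma\in H}\sigma b\otimes\sigma^{-1}=\sum_{\sigma\in H}\sigma\otimes\overline{b\sigma^{-1}}\qquad\text{in }\bbZ[H]\otimes_\bbZ\bbZ[H]/I(H)^{n}.$$
Indeed, $e:=\sum_{\sigma\in H}\sigma\otimes\overline{\sigma^{-1}}$ satisfies $(\tau\otimes1)\,e=(1\otimes\overline\tau)\,e$ for all $\tau\in H$ (substitute $\rho=\tau\sigma$ in the sum), hence $(b\otimes1)\,e=(1\otimes\overline b)\,e$ by $\bbZ$-linearity, which is the displayed identity rewritten. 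Two consequences: (a) the left side vanishes iff $b\in I(H)^n$ (as $\{\sigma\}_{\sigma\in H}$ is a $\bbZ$-basis of the first factor, vanishing means $b\sigma^{-1}\in I(H)^n$ for all $\sigma$); (b) if $b\in I(H)^{n-1}$ then $\overline{b\sigma^{-1}}=\overline b$, so the right side collapses to $\N_H\otimes\overline b$. Consequence (a), with $n=d$, is the statement used in the previous paragraph. Consequence (b), with $n=d+1$ and $b=b_{\bar h}\in I(H)^d$, gives, again coset by coset,
$$\sum_{\sigma\in H}\sigma\,\varphi(m)\otimes\sigma^{-1}=\sum_{\bar h\in G/H}\N_H\widetilde h\otimes\overline{b_{\bar h}}\qquad\text{in }\bbZ[G]\otimes_\bbZ\bbZ[H]/I(H)^{d+1}.$$

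Now combine. Both expressions for $\varphi(i(y))=\sum_\sigma\sigma\,\varphi(m)\otimes\sigma^{-1}$ — the one from the second remark and the one just displayed — lie in $\bbZ[G]^H\otimes_\bbZ Q(H)^d$, which injects into $\bbZ[G]\otimes_\bbZ\bbZ[H]/I(H)^{d+1}$ ($\bbZ[G]^H$ is a direct summand of the free module $\bbZ[G]$, and $\bbZ[G]\otimes_\bbZ-$ preserves the injection $Q(H)^d\hookrightarrow\bbZ[H]/I(H)^{d+1}$); hence the two expressions agree already in $\bbZ[G]^H\otimes_\bbZ Q(H)^d$:
$$\sum_{\bar h}\N_H\widetilde h\otimes\overline{b_{\bar h}}=\sum_k j^{-1}(\varphi^H(x_k))\otimes a_k .$$
Apply $j\otimes\Id$, landing in $\bbZ[G/H]\otimes_\bbZ Q(H)^d$, and then the isomorphism $(\ref{eqaug})$ onto $Q_H^d=I_H^d/I_H^{d+1}$: the right side becomes $\sum_k\varphi^H(x_k)\otimes a_k=\varphi^H\bigl(i^{-1}(\N_H^{(1,d)}(m))\bigr)$, and the left side becomes $\sum_{\bar h}\overline{\widetilde h\,b_{\bar h}}=\overline{\varphi(m)}$ (here $b_{\bar h}\in I(H)^d$ provides the lift needed by $(\ref{eqaug})$). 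That is the claimed equality. The one genuine obstacle is to find this intermediate identity: every naive map $\bbZ[G]\otimes_\bbZ\bbZ[H]/I(H)^{d+1}\to\bbZ[G]/I_H^{d+1}$ that multiplies the two tensor factors sends $\sum_\sigma\sigma\,\varphi(m)\otimes\sigma^{-1}$ to $|H|\,\varphi(m)$, which is useless since $|H|\cdot Q_H^d=0$; one must instead keep the factors apart and exploit the shift symmetry $(\tau\otimes1)e=(1\otimes\overline\tau)e$, which is exactly what confines $\sum_\sigma\sigma\,\varphi(m)\otimes\sigma^{-1}$ to $\bbZ[G]^H\otimes_\bbZ Q(H)^d$ and determines its class there.
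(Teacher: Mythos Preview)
Your proof is correct. The $r=0$ and $d=0$ cases match the paper's treatment exactly, and your $r=1$ argument is a valid but genuinely different route.

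For $r=1$, the paper does \emph{not} avoid the multiplication map: it defines $\alpha:\bbZ[G]\otimes_\bbZ\bbZ[H]/I(H)^{d+1}\to\bbZ[G]/I_H^{d+1}$, $a\otimes\bar b\mapsto\overline{ab}$, and precomposes it with a \emph{non-equivariant} partial lift $\widetilde\varphi=\sum_{\sigma\in G/H}\varphi^1(\widetilde\sigma(\cdot))\widetilde\sigma^{-1}\in\Hom_\bbZ(M,\bbZ[G])$. The point is that $\alpha\circ(\widetilde\varphi\otimes\Id)$ applied to $\N_H^{(1,d)}(m)$ reassembles the full sum $\varphi=\sum_{\sigma,\tau}\varphi^1(\widetilde\sigma\tau(\cdot))\widetilde\sigma^{-1}\tau^{-1}$ and hence yields $\overline{\varphi(m)}$, while on an element of $M^H\otimes Q(H)^d$ the choice of lifts is irrelevant and the same composite computes $\varphi^H$. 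So your closing remark that ``every naive map that multiplies the two tensor factors \ldots\ is useless'' is a bit off: it is useless only if one insists on applying the $G$-linear $\varphi$ first. The paper's trick is precisely to drop equivariance. Your route instead keeps the $G$-linear $\varphi\otimes\Id$, then uses the shift identity $(\tau\otimes1)e=(1\otimes\bar\tau)e$ to force the image into $\bbZ[G]^H\otimes_\bbZ Q(H)^d$, where $j\otimes\Id$ and~(\ref{eqaug}) read off both sides. What you gain is a clean reusable group-ring lemma whose two consequences simultaneously give $\varphi(m)\in I_H^d$ and the main equality; what the paper gains is brevity once $\widetilde\varphi$ is written down.
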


\begin{proof}
When $d=0$, the proposition follows from Remarks \ref{Hrem}, \ref{reminj}, and \ref{remnorm}. When $r=0$, the proposition is clear. So we suppose $r=1$. Note that in this case the map $i$ is the inclusion 
$$i: M^H \otimes_\bbZ Q(H)^d \hookrightarrow M \otimes_\bbZ \bbZ[H]/I(H)^{d+1}.$$
We regard $M^H \otimes_\bbZ Q(H)^d \subset M \otimes_\bbZ \bbZ[H]/I(H)^{d+1}.$ 

Take any $\varphi \in \Hom_G(M,\bbZ[G])$. Then $\varphi^H$ is written by
$$\varphi^H=\sum_{\sigma\in G/H}\varphi^1(\sigma(  \cdot  ))\sigma^{-1}$$
(see Remark \ref{Hrem}). For each $\sigma \in G/H$, we fix a lifting $\widetilde \sigma \in G$, and put 
$$\widetilde \varphi =\sum_{\sigma\in G/H}\varphi^1(\widetilde \sigma(  \cdot  )){\widetilde \sigma}^{-1} \in \Hom_\bbZ(M,\bbZ[G]).$$
Then, by the assumption on $\N_H^{(1,d)}(m)$, we have 
$$ \varphi^H(\N_H^{(1,d)}(m))=(\alpha \circ(\widetilde \varphi\otimes \Id))(\N_H^{(1,d)}(m)) \in Q_H^d, $$
where 
$$\alpha : \bbZ[G]\otimes_\bbZ \bbZ[H]/I(H)^{d+1} \longrightarrow \bbZ[G]/I_H^{d+1} \quad;\quad a\otimes \overline b \mapsto \overline{ab}.$$
It is easy to check that 
$$\varphi(m)=(\alpha \circ(\widetilde \varphi\otimes \Id))(\N_H^{(1,d)}(m)) \quad \text{in} \quad \bbZ[G]/I_H^{d+1}. $$ 
(This can be checked by noting that 
$$\varphi=\sum_{\sigma \in G/H}\sum_{\tau \in H}\varphi^1(\widetilde \sigma \tau (\cdot))\widetilde \sigma^{-1}\tau^{-1}.)$$
Hence we have 
$$\varphi(m)=\varphi^H(\N_H^{(1,d)}(m)) \quad \text{in} \quad Q_H^d.$$
\end{proof}

\begin{remark}
We expect that the assertion in Proposition \ref{propnorm} holds for general $r$ and $d$. (See Conjecture \ref{phiconj} in \S \ref{refconj}.)
\end{remark}

\begin{theorem} \label{thminj}
Let $M$ be a $G$-lattice, and $r,d \in \bbZ_{\geq0}$. Then the map 
$$(\bigcap_{G/H}^rM^H)\otimes_\bbZ Q(H)^d \longrightarrow \Hom_G(\bigwedge_G^r\Hom_G(M,\bbZ[G]),Q_H^d) \quad ; \quad \alpha \mapsto (\Phi \mapsto \Phi^H(\alpha))$$
is injective.
\end{theorem}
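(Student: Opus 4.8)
The plan is to reduce the statement to the injectivity already established in Lemma~\ref{inj}, by exhibiting a concrete left inverse built from a single, well-chosen functional. First I would recall the isomorphism of Remark~\ref{rlat}, which identifies $\bigcap_{G/H}^r M^H$ with $\Hom_{G/H}(\Im\iota_H,\bbZ[G/H])$, where $\iota_H$ is the map from $\bigwedge_{G/H}^r\Hom_{G/H}(M^H,\bbZ[G/H])$ into $\Hom_{G/H}(\bigwedge_{G/H}^rM^H,\bbZ[G/H])$. Tensoring with $Q(H)^d$ over $\bbZ$ and using that $\Im\iota_H$ is a finitely generated $\bbZ$-free module, an element $\alpha\in(\bigcap_{G/H}^rM^H)\otimes_\bbZ Q(H)^d$ is zero if and only if $\Phi^H(\alpha)=0$ in $\bbZ[G/H]\otimes_\bbZ Q(H)^d\simeq Q_H^d$ for every $\Phi^H$ in the image of the surjection $\Phi\mapsto\Phi^H$ of Lemma~\ref{surj}. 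Thus the only thing to check is that the map in the statement, whose target uses $\Hom_G(-,Q_H^d)$ rather than $\Hom_{G/H}(-,Q_H^d)$, really does detect each such $\Phi^H(\alpha)$ — i.e.\ that passing from the $G/H$-equivariant pairing to the $G$-equivariant one loses no information.

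The key step is therefore to compare $\Phi\mapsto\Phi^H(\alpha)\in Q_H^d$, viewed as a $G$-homomorphism on $\bigwedge_G^r\Hom_G(M,\bbZ[G])$, with the $G/H$-homomorphism $\Phi^H\mapsto\Phi^H(\alpha)$ on $\bigwedge_{G/H}^r\Hom_{G/H}(M^H,\bbZ[G/H])$. Since by Lemma~\ref{surj} every element of the latter exterior power is of the form $\Phi^H$, and since the operation $\Phi\mapsto\Phi^H$ is compatible with the wedge structure by construction, these two maps have the same kernel-detecting power: the first factors as $\bigwedge_G^r\Hom_G(M,\bbZ[G])\twoheadrightarrow\bigwedge_{G/H}^r\Hom_{G/H}(M^H,\bbZ[G/H])\to Q_H^d$. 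Hence the map in the theorem is injective if and only if the composite $(\bigcap_{G/H}^rM^H)\otimes_\bbZ Q(H)^d\to\Hom_{G/H}(\bigwedge_{G/H}^r\Hom_{G/H}(M^H,\bbZ[G/H]),Q_H^d)$ is injective. Using $\Hom_G(\Im\iota_H,\bbZ[G])\simeq\Hom_{G/H}(\Im\iota_H,\bbZ[G/H])$ (Lemma~\ref{hom}), this last injectivity is precisely the combination of Remark~\ref{rlat} and the torsion-freeness argument in the proof of Lemma~\ref{inj}: the natural map $(\bigcap_{G/H}^rM^H)\otimes_\bbZ Q(H)^d\to\Hom_{G/H}(\Im\iota_H,\bbZ[G/H])\otimes_\bbZ Q(H)^d$ is an isomorphism, and $\Hom_{G/H}(\Im\iota_H,\bbZ[G/H])$ being $\bbZ$-free, tensoring with $Q(H)^d$ and then restricting along the surjection from the full exterior power is injective.

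I expect the main obstacle to be purely bookkeeping: keeping straight the three different $\Hom$-groups (over $G$, over $G/H$, and over $\bbZ$) and the identifications $Q_H^d\simeq\bbZ[G/H]\otimes_\bbZ Q(H)^d$ from (\ref{eqaug}), so that the factorization of $\Phi\mapsto\Phi^H(\alpha)$ through $\Phi\mapsto\Phi^H$ is genuinely well-defined on the nose and not merely up to the identifications. A secondary subtlety is that one must phrase the argument entirely at the level of $\Im\iota_H$ (not the full $\Hom_{G/H}(\bigwedge_{G/H}^rM^H,\bbZ[G/H])$), since it is only on $\Im\iota_H$ that Remark~\ref{rlat} gives the clean duality $\bigcap_{G/H}^rM^H\xrightarrow{\sim}\Hom_{G/H}(\Im\iota_H,\bbZ[G/H])$; but this is exactly how Lemma~\ref{inj} was set up, so the argument runs in parallel. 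No new input beyond Lemmas~\ref{hom}, \ref{surj}, \ref{inj} and Remark~\ref{rlat} should be needed.
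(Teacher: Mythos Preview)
Your approach is essentially the same as the paper's: factor the map through the surjection $\Phi\mapsto\Phi^H$ of Lemma~\ref{surj}, then use the identification of $\bigcap_{G/H}^rM^H$ with a $\Hom$-module (Remark~\ref{rlat}) together with torsion-freeness to see that tensoring with $Q(H)^d$ preserves injectivity. The paper organizes this as a composite of three explicit injections, invoking Lemma~\ref{homin} at the step where you instead appeal to ``$\Im\iota_H$ being $\bbZ$-free''; that step is exactly the content of Lemma~\ref{homin}, so you should cite it rather than leave it implicit. Two minor exposition points: your opening sentence about ``exhibiting a concrete left inverse built from a single, well-chosen functional'' does not describe what you actually do, and you do not really reduce to Lemma~\ref{inj} itself but only reuse the torsion-freeness idea from its proof.
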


\begin{proof}
Let
$$\iota_H : \bigwedge_{G/H}^r\Hom_{G/H}(M^H,\bbZ[G/H]) \longrightarrow \Hom_{G/H}(\bigwedge_{G/H}^rM^H,\bbZ[G/H]).$$
be the map defined in Remark \ref{rlat} for $G/H$ and $M^H$. Taking $\Hom_{G/H}(-,\bbZ[G/H])$ to the exact sequence
$$0 \longrightarrow \Ker \iota_H \longrightarrow \bigwedge_{G/H}^r\Hom_{G/H}(M^H,\bbZ[G/H]) \longrightarrow \Im \iota_H \longrightarrow 0,$$
we have the exact sequence
$$0 \longrightarrow \bigcap_{G/H}^rM^H \longrightarrow \Hom_{G/H}(\bigwedge_{G/H}^r\Hom_{G/H}(M^H,\bbZ[G/H]),\bbZ[G/H]) \longrightarrow \Hom_{G/H}(\Ker \iota_H,\bbZ[G/H]).$$
Since $\Hom_{G/H}(\Ker \iota_H,\bbZ[G/H])$ is torsion-free, the map 
\begin{eqnarray}
{(\bigcap_{G/H}^r M^H) \otimes_\bbZ Q(H)^d \longrightarrow \Hom_{G/H}(\bigwedge_{G/H}^r\Hom_{G/H}(M^H,\bbZ[G/H]),\bbZ[G/H])\otimes_\bbZ Q(H)^d }   \nonumber   
\end{eqnarray}
is injective. From Lemma \ref{homin}, we have an injection 
\begin{eqnarray}
&& \Hom_{G/H}(\bigwedge_{G/H}^r\Hom_{G/H}(M^H,\bbZ[G/H]),\bbZ[G/H])\otimes_\bbZ Q(H)^d \nonumber \\
& \longrightarrow &  \Hom_{G/H}(\bigwedge_{G/H}^r\Hom_{G/H}(M^H,\bbZ[G/H]),Q_H^d) 
=\Hom_{G}(\bigwedge_{G/H}^r\Hom_{G/H}(M^H,\bbZ[G/H]),Q_H^d). \nonumber
\end{eqnarray}
From Lemma \ref{surj}, we also have an injection 
\begin{eqnarray}
\Hom_{G}(\bigwedge_{G/H}^r\Hom_{G/H}(M^H,\bbZ[G/H]),Q_H^d) \longrightarrow \Hom_G(\bigwedge_G^r \Hom_G(M,\bbZ[G]),Q_H^d). \nonumber
\end{eqnarray}
The composition of the above three injections coincides with the map given in the theorem, hence we complete the proof.
\end{proof}

\section{Conjectures} \label{secconj}
\subsection{Notation} \label{not}
Throughout this section, we fix a global field $k$. We also fix $T$, a finite set of places of $k$, containing no infinite place.
For a finite separable extension $L/k$ and a finite set $S$ of places of $k$, $S_L$ denotes the set of places of $L$ lying above the places in $S$. For $S$ containing all the infinite places and disjoint to $T$, $\cO_{L,S,T}^\times$ denotes the $(S,T)$-unit group of $L$, i.e. 
$$\cO_{L,S,T}^\times = \{ a\in L^\times \ | \ \ord_w(a)=0 \mbox{ for all } w\notin S_L \mbox{ and } a \equiv 1 \ (\mod \ w') \mbox{ for all } w'\in T_L \},$$
where $\ord_w$ is the (normalized) additive valuation at $w$. Let $Y_{L,S}=\bigoplus_{w\in S_L}\bbZ w$, the free abelian group on $S_L$, and $X_{L,S}=\{ \sum a_ww \in Y_{L,S} \ | \ \sum a_w=0 \}$. Let 
$$\lambda_{L,S} : \cO_{L,S,T}^\times \longrightarrow \bbR \otimes_\bbZ X_{L,S}$$
be the map defined by $\lambda_{L,S}(a)=-\sum_{w\in S_L}\log|a|_w w$, where $| \cdot |_w$ is the normalized absolute value at $w$.

Let $\Omega(=\Omega(k,T))$ be the set of triples $(L,S,V)$ satisfying the following:
\begin{itemize}
\item{$L$ is a finite abelian extension of $k$,}
\item{$S$ is a nonempty finite set of places of $k$ satisfying
\begin{itemize}
\item{$S \cap T=\emptyset$,}
\item{$S$ contains all the infinite places and all places ramifying in $L$,}
\item{$\cO_{L,S,T}^\times$ is torsion-free,} 
\end{itemize}}
\item{$V$ is a subset of $S$ satisfying
\begin{itemize}
\item{any $v\in V$ splits completely in $L$,}
\item{$|S| \geq |V|+1$.}
\end{itemize}}
\end{itemize}
We assume that $\Omega \neq \emptyset$. If $k$ is a number field, then the condition that $\cO_{L,S,T}^\times$ is torsion-free is satisfied when, for example, $T$ contains two finite places of unequal residue characteristics. 

Take $(L,S,V) \in \Omega$, and put $\cG_L=\Gal(L/k)$, $r=r_V=|V|$. The equivariant $L$-function attached to the data $(L/k,S,T)$ is defined by 
$$\Theta_{L,S,T}(s) = \sum_{\chi \in \widehat \cG_L}e_\chi L_{S,T}(s,\chi^{-1}),$$
where $\widehat \cG_L=\Hom_\bbZ(\cG_L,\bbC^\times)$, $e_\chi=\frac1{|\cG_L|}\sum_{\sigma\in \cG_L}\chi(\sigma)\sigma^{-1}$, and 
$$L_{S,T}(s,\chi)=\prod_{v\in T}(1-\chi(\Fr_v)\N v^{1-s})\prod_{v\notin S}(1-\chi(\Fr_v)\N v^{-s})^{-1},$$
where $\Fr_v \in \cG_L$ is the arithmetic Frobenius at $v$, and $\N v$ is the cardinality of the residue field at $v$. 

We define 
$$\Lambda_{L,S,T}^r=\{ a\in \bigcap_{\cG_L}^r\cO_{L,S,T}^\times  \ | \ e_\chi a=0 \mbox{ for every } \chi \in \widehat \cG_L \mbox{ such that } r(\chi) >r \},$$
where $r(\chi)=r(\chi,S)=\ord_{s=0}L_{S,T}(s,\chi)$ (for the definition of $\bigcap_{\cG_L}^r$, see Definition \ref {deflat}). It is well-known that 
$$r(\chi)=
\begin{cases}
|\{ v\in S \ | \ v \mbox{ splits completely in } L^{\Ker \chi} \}| &\text{if $\chi$ is nontrivial}, \\
|S|-1 &\text{if $\chi$ is trivial},
\end{cases}   $$
(see \cite[Proposition 3.4, Chpt. I]{T}) so by our assumptions on $V$, we have $r(\chi) \geq r$ for every $\chi$. This implies that  $s^{-r}\Theta_{L,S,T}(s)$ is holomorphic at $s=0$. We define 
$$\Theta_{L,S,T}^{(r)}(0)=\lim_{s\rightarrow 0}s^{-r}\Theta_{L,S,T}(s) \in \bbC[\cG_L].$$

We fix the following:
\begin{itemize}
\item{a bijection $\{ \mbox{all the places of } k \} \simeq \bbZ_{\geq 0}$,}
\item{for each place $v$ of $k$, a place of $\bar k$ (a fixed separable closure of $k$) lying above $v$.}
\end{itemize}
From this fixed choice, we can regard $V$ as a totally ordered finite set with order $\prec$, and arrange $V=\{ v_1,\ldots, v_r \}$ so that $v_1 \prec \cdots \prec v_r$. 
For each $v \in V$, there is a fixed place $w$ of $L$ lying above $v$, and define $v^\ast \in \Hom_{\cG_L}(Y_{L,S},\bbZ[\cG_L])$ to be the dual of $w$, i.e. 
$$v^\ast(w')= \sum_{\sigma w=w'}\sigma.$$
Thus, we often use slightly ambiguous notations such as follows: the fixed places of $L$ lying above $v, v', v_i$, etc. are denoted by $w, w', w_i$, etc. respectively. 
We define the analytic regulator map $R_V : \bigwedge_{\cG_L}^r \cO_{L,S,T}^\times \rightarrow \bbR[\cG_L]$ by 
$$R_V=\bigwedge_{v\in V}(v^\ast \circ \lambda_{L,S}),$$
where the exterior power in the right hand side means $(v_1^\ast \circ \lambda_{L,S} \wedge \cdots \wedge v_r^\ast \circ \lambda_{L,S})$ (defined similarly to (\ref{extmap})). Thus, when we take an 
exterior power on a totally ordered finite set, we always mean that the order is arranged to be ascending order. One can easily see that 
$$v^\ast \circ \lambda_{L,S}=-\sum_{\sigma \in \cG_{L}}\log|\sigma(\cdot)|_w \sigma^{-1},$$
so a more explicit definition of $R_V$ is as follows: 
$$R_V(u_1\wedge \cdots \wedge u_r)=\det(-\sum_{\sigma \in \cG_L}\log|\sigma(u_i)|_{w_j}\sigma^{-1}).$$

\subsection{The Rubin-Stark conjecture} \label{secrsconj}
We use the notations and conventions as in \S \ref{not}. Recall that the integral refinement of abelian Stark conjecture, which we call Rubin-Stark conjecture, formulated by Rubin, is stated as follows:

\begin{conjecture}[Rubin {\cite[Conjecture B$'$]{R2}}] \label{rsconj}
For $(L,S,V) \in \Omega$, there is a unique $\varep_{L,S,V}=\varep_{L,S,T,V}\in \Lambda_{L,S,T}^r$ such that 
$$R_V(\varep_{L,S,V})=\Theta_{L,S,T}^{(r)}(0).$$
\end{conjecture}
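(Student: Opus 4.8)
The existence of $\varep_{L,S,V}$ is precisely the content of Rubin's conjecture and cannot be established unconditionally in this generality (it is known, e.g., when $r=0$, and for the abelian-over-$\bbQ$ data used in \S\ref{appl}); so the part of the statement I would prove is the \emph{uniqueness} of $\varep_{L,S,V}$. Concretely, the plan is to show: if $a\in\Lambda_{L,S,T}^r$ and $R_V(a)=0$, then $a=0$. By the very definition of Rubin's lattice one has $\Lambda_{L,S,T}^r\subseteq\bigcap_{\cG_L}^r\cO_{L,S,T}^\times\subseteq(\bigwedge_{\cG_L}^r\cO_{L,S,T}^\times)\otimes_\bbZ\bbQ$, and the latter injects into $(\bigwedge_{\cG_L}^r\cO_{L,S,T}^\times)\otimes_\bbZ\bbC$; hence it is enough to prove $a=0$ after extension of scalars to $\bbC$. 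Decomposing along the idempotents $e_\chi$ gives $\bbC\otimes_\bbZ\bigwedge_{\cG_L}^r\cO_{L,S,T}^\times=\prod_{\chi\in\widehat\cG_L}\bigwedge_\bbC^r\bigl(e_\chi(\bbC\otimes_\bbZ\cO_{L,S,T}^\times)\bigr)$, so it suffices to kill each component $e_\chi a$.

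First I would dispose of the characters with $r(\chi)>r$: for these $e_\chi a=0$ straight from the definition of $\Lambda_{L,S,T}^r$. Since the hypotheses on $V$ force $r(\chi)\geq r$ for all $\chi$ (as recalled just after the definition of $\Lambda_{L,S,T}^r$), the remaining characters have $r(\chi)=r$, and for these the goal is to prove that $e_\chi R_V$ is injective on the relevant component. By Dirichlet's unit theorem $\lambda_{L,S}$ induces an isomorphism $\bbR\otimes_\bbZ\cO_{L,S,T}^\times\cong\bbR\otimes_\bbZ X_{L,S}$; combining this with the elementary character-by-character computation of $\dim_\bbC(\bbC\otimes_\bbZ X_{L,S})$ and the order-of-vanishing formula for $L_{S,T}(s,\chi)$ quoted from Tate yields $\dim_\bbC e_\chi(\bbC\otimes_\bbZ\cO_{L,S,T}^\times)=r(\chi)=r$. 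Thus the $\chi$-component of $\bbC\otimes_\bbZ\bigwedge_{\cG_L}^r\cO_{L,S,T}^\times$ is one-dimensional, and $e_\chi R_V$ is injective on it as soon as it is nonzero there, i.e. as soon as the $r$ functionals $e_\chi(v_i^\ast\circ\lambda_{L,S})$, $i=1,\dots,r$, are linearly independent on $e_\chi(\bbC\otimes_\bbZ\cO_{L,S,T}^\times)$.

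This linear-independence assertion is the one genuine point, and I expect it to be the main (if routine) obstacle; everything else is formal manipulation of idempotents and of Rubin's lattice. The mechanism is that when $r(\chi)=r=|V|$ and each $v\in V$ splits completely in $L$, hence in $L^{\Ker\chi}$, the set $V$ is forced to be exactly the set of places of $S$ splitting completely in $L^{\Ker\chi}$ if $\chi\neq1$, and to be $S$ with one place removed if $\chi=1$ (which for such $\chi$ happens only when $|S|=|V|+1$). In either case one checks directly that $v_i^\ast(e_\chi w_j)=\delta_{ij}e_\chi$ for the fixed places $w_1,\dots,w_r$ above $v_1,\dots,v_r$, that $e_\chi(\bbC\otimes_\bbZ X_{L,S})$ is $r$-dimensional with no nonzero element supported off $\{w_1,\dots,w_r\}$, and therefore that the $e_\chi(v_i^\ast\circ\lambda_{L,S})$ form a basis of the dual of $e_\chi(\bbC\otimes_\bbZ\cO_{L,S,T}^\times)$ — this is exactly the non-degeneracy of the $\chi$-part of the Stark regulator for the chosen $V$. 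It follows that $e_\chi R_V$ is injective, so $e_\chi a=0$, and letting $\chi$ range over $\widehat\cG_L$ gives $a=0$, which is the uniqueness asserted in Conjecture \ref{rsconj}.
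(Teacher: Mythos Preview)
Your proposal is correct and follows essentially the same approach as the paper. The statement is a conjecture and is not proved in the paper; uniqueness, however, is established separately as Lemma~\ref{reginj}, whose proof proceeds exactly as you outline: reduce via $\lambda_{L,S}$ to $e_\chi(\bbC\otimes_\bbZ\bigwedge_{\cG_L}^r X_{L,S})$, note this is one-dimensional when $r(\chi)=r$, and show $\bigwedge_{v\in V}v^\ast$ is nonzero there. The only difference is that the paper's final step is slightly slicker than yours: rather than verifying that the functionals $e_\chi v_i^\ast$ form a dual basis, it simply picks any $v'\in S\setminus V$ and computes $(\bigwedge_{v\in V}v^\ast)(e_\chi\bigwedge_{v\in V}(w-w'))=e_\chi\neq0$ directly.
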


The element $\varep_{L,S,V}$ predicted by the conjecture is called Rubin-Stark unit, Rubin-Stark element, or simply Stark unit, etc. 
In this paper we call it Rubin-Stark unit. 

\begin{remark} \label{rzero}
When $r=0$, Conjecture \ref{rsconj} is known to be true (see \cite[Theorem 3.3]{R2}). In this case we have $\varep_{L,S,V}=\Theta_{L,S,T}(0) \in \bbZ[\cG_L]=\bigcap_{\cG_L}^0\cO_{L,S,T}^\times$.
\end{remark}

\begin{remark} \label{rstriv}
When $r <  \min \{ |S|-1, |\{ v\in S \ | \ v \mbox{ splits completely in }L \}| \}$, we have $\Theta_{L,S,T}^{(r)}(0)=0$, so Conjecture \ref{rsconj} is trivially true (namely, we have $\varep_{L,S,V}=0$).
\end{remark}

\begin{remark}
When $k=\bbQ$, Conjecture \ref{rsconj} is true for any $T$ and $(L,S,V)\in \Omega(\bbQ,T)$ (see \cite[Theorem A]{B}).
\end{remark}
\subsection{Some properties of Rubin-Stark units}
In this subsection, we assume that Conjecture \ref{rsconj} holds for all $(L,S,V) \in \Omega$, and review some properties of Rubin-Stark units. 

\begin{lemma}[{\cite[Lemma 2.7 (ii)]{R2}}] \label{reginj}
Let $(L,S,V) \in \Omega$. Then $R_V$ is injective on $\bbQ \otimes_\bbZ \Lambda_{L,S,T}^r$.
\end{lemma}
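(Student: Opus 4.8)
The plan is to pass to $\bbC$-coefficients, decompose everything under the idempotents $e_\chi$ ($\chi\in\widehat{\cG_L}$), and reduce the statement to a non-vanishing of the regulator on each character component, which in turn comes down to the linear independence of the functionals $v^\ast$, $v\in V$, on a suitable space.

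First I would record the input from Dirichlet's unit theorem: since $\cO_{L,S,T}^\times$ is torsion-free of rank $|S_L|-1=\mathrm{rank}\,X_{L,S}$ and $\ker\lambda_{L,S}$ is torsion, $\lambda_{L,S}$ induces an isomorphism $\bbC\otimes_\bbZ\cO_{L,S,T}^\times\xrightarrow{\sim}\bbC\otimes_\bbZ X_{L,S}$, hence an isomorphism $e_\chi(\bbC\otimes\cO_{L,S,T}^\times)\xrightarrow{\sim}e_\chi(\bbC\otimes X_{L,S})$ for each $\chi$. Because the $T$-Euler factors $1-\chi(\Fr_v)\N v$ are nonzero, $r(\chi)=\ord_{s=0}L_{S,T}(s,\chi)=\dim_\bbC e_\chi(\bbC\otimes X_{L,S})$. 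Now $\bbC\otimes_\bbZ\bigcap_{\cG_L}^r\cO_{L,S,T}^\times=\bigoplus_\chi\bigwedge_\bbC^r\big(e_\chi(\bbC\otimes\cO_{L,S,T}^\times)\big)$, the $\chi$-summand vanishing unless $r(\chi)\ge r$, so the defining condition of $\Lambda_{L,S,T}^r$ gives
$$\bbC\otimes_\bbZ\Lambda_{L,S,T}^r=\bigoplus_{\chi:\,r(\chi)=r}\bigwedge_\bbC^r\big(e_\chi(\bbC\otimes_\bbZ\cO_{L,S,T}^\times)\big),$$
each summand one-dimensional. Since $\bbQ\otimes_\bbZ\Lambda_{L,S,T}^r$ embeds in this $\bbC$-vector space, it suffices to show $R_V\otimes\bbC$ is nonzero on each one-dimensional summand.

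Next I would fix $\chi$ with $r(\chi)=r$ and analyze $e_\chi(\bbC\otimes X_{L,S})$ together with the restrictions of the $v^\ast$, $v\in V$. Write $Y_{L,S}=\bigoplus_{v\in S}Y_v$ with $Y_v=\bigoplus_{w\mid v}\bbZ w\simeq\Ind_{D_v}^{\cG_L}\bbZ$, $D_v$ the decomposition group of $v$ in $L$; then $e_\chi(\bbC\otimes Y_v)\ne0$ iff $\chi|_{D_v}$ is trivial, i.e. $v$ splits completely in $L^{\ker\chi}$, and then it is one-dimensional. Using the combinatorial formula for $r(\chi)$ from \S\ref{not}, together with $|V|=r=r(\chi)$ and the fact that every $v\in V$ splits completely in $L\supseteq L^{\ker\chi}$: if $\chi\ne1$ then $V$ is exactly the set of places of $S$ splitting completely in $L^{\ker\chi}$, while if $\chi=1$ then $r=|S|-1$ and $V=S\setminus\{v_0\}$ for a single $v_0$. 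In either case, using $0\to X_{L,S}\to Y_{L,S}\xrightarrow{\deg}\bbZ\to0$, the $r$ functionals $v^\ast$ ($v\in V$) restrict to $e_\chi(\bbC\otimes X_{L,S})$ to a $\bbC$-basis of its dual: for $\chi\ne1$, $e_\chi(\bbC\otimes X_{L,S})=\bigoplus_{v\in V}e_\chi(\bbC\otimes Y_v)$ and $v^\ast$ kills $e_\chi(\bbC\otimes Y_{v'})$ for $v'\ne v$ while restricting to an isomorphism on $e_\chi(\bbC\otimes Y_v)$ (for $v$ split, $Y_v\simeq\bbZ[\cG_L]$ and $v^\ast$ is a $\cG_L$-isomorphism); for $\chi=1$, the $v^\ast$ are nonzero scalar multiples of the coordinate functionals on $e_1(\bbC\otimes Y_{L,S})$, and $e_1(\bbC\otimes X_{L,S})$ is the hyperplane transverse to the dropped coordinate $v_0$. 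Hence $\bigwedge_{v\in V}v^\ast$ is nonzero on the one-dimensional space $\bigwedge_\bbC^r\big(e_\chi(\bbC\otimes X_{L,S})\big)$.

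Finally, precomposing with the isomorphism $\bigwedge_\bbC^r\big(e_\chi(\bbC\otimes\cO_{L,S,T}^\times)\big)\xrightarrow{\sim}\bigwedge_\bbC^r\big(e_\chi(\bbC\otimes X_{L,S})\big)$ induced by $\lambda_{L,S}$, I conclude that $R_V\otimes\bbC=\bigwedge_{v\in V}(v^\ast\circ\lambda_{L,S})$ is nonzero on every one-dimensional summand of $\bbC\otimes_\bbZ\Lambda_{L,S,T}^r$, so injective there, which yields injectivity of $R_V$ on $\bbQ\otimes_\bbZ\Lambda_{L,S,T}^r$. The main obstacle is the third paragraph: identifying $e_\chi(\bbC\otimes X_{L,S})$ precisely enough to see that the $r$ functionals $v^\ast$, $v\in V$, are linearly independent on it; this is exactly where the structure of $\Omega$ is used, namely that $|S|\ge|V|+1$ and the splitting hypotheses on $V$ force $V$ to consist of precisely $r(\chi)$ places that split completely in the relevant subfield. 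The remaining steps are bookkeeping with idempotents and the Dirichlet isomorphism.
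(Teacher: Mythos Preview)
Your proof is correct and follows the same overall strategy as the paper's: reduce via the Dirichlet isomorphism $\lambda_{L,S}$ to showing that $\bigwedge_{v\in V}v^\ast$ is nonzero on each one-dimensional component $e_\chi(\bbC\otimes_\bbZ\bigwedge_{\cG_L}^r X_{L,S})$ for $\chi$ with $r(\chi)=r$. The execution differs in the last step. Where you carry out a case analysis (trivial vs.\ nontrivial $\chi$) to identify $e_\chi(\bbC\otimes X_{L,S})$ and verify that the $v^\ast$, $v\in V$, restrict to a dual basis, the paper simply picks any $v'\in S\setminus V$, writes down the explicit element $e_\chi\bigwedge_{v\in V}(w-w')$, and computes directly that $\bigwedge_{v\in V}v^\ast$ sends it to $e_\chi\neq 0$ (because $v_i^\ast(w_j-w')=\delta_{ij}$ for $v_i,v_j\in V$). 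This bypasses both the case distinction and the identification of which places split in $L^{\ker\chi}$, using only the existence of some $v'\in S\setminus V$, which is guaranteed by the hypothesis $|S|\ge|V|+1$. Your structural analysis is not wrong, just longer than needed; the paper's explicit element is the shortcut you were looking for in your third paragraph.
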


\begin{proof}
Since $\lambda_{L,S}$ induces an injection $\bbQ \otimes_\bbZ \bigwedge_{\cG_L}^r \cO_{L,S,T}^\times \rightarrow \bbC \otimes_\bbZ \bigwedge_{\cG_L}^r X_{L,S}$, it is sufficient to prove that 
$$\bigwedge_{v\in V}v^\ast :  e_\chi(\bbC \otimes_\bbZ\bigwedge_{\cG_L}^rX_{L,S}) \longrightarrow \bbC[\cG_L]$$
is injective for every $\chi \in \widehat \cG_L$ such that $r(\chi)=r$. It is well-known that $r(\chi)=\dim_\bbC(e_\chi(\bbC \otimes_\bbZ X_{L,S}))$, so we have $\dim_\bbC(e_\chi(\bbC \otimes_\bbZ\bigwedge_{\cG_L}^rX_{L,S}))=1$. Take any $v' \in S \setminus V$, then we have 
$$(\bigwedge_{v\in V}v^\ast) (e_\chi \bigwedge_{v\in V}(w-w'))=e_\chi \neq 0,$$
(recall that $w$ (resp. $w'$) denotes the fixed place of $L$ lying above $v$ (resp. $v'$)), which proves the lemma.
\end{proof}

\begin{proposition}[{\cite[Proposition 6.1]{R2}}] \label{nr}
Let $(L,S,V),(L',S',V) \in \Omega$, and suppose that $L\subset L'$ and $S\subset S'$. Then we have 
$$ \N_{L'/L}^r(\varep_{L',S',V})=(\prod_{v \in S'\setminus S}(1-\Fr_v^{-1}))\varep_{L,S,V},$$
where $\N_{L'/L}=\N_{\Gal(L'/L)}$, and if $r=0$, then we regard $\N_{L'/L}^r$ as the natural map $\bbZ[\cG_{L'}]\to\bbZ[\cG_L]$.
\end{proposition}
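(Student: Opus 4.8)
The plan is to prove Proposition \ref{nr} by reducing everything to the defining characterization of the Rubin-Stark unit, namely the equation $R_V(\varep_{?,?,V}) = \Theta_{?,?,T}^{(r)}(0)$, and then exploiting the functoriality of the analytic regulator map together with the known behavior of equivariant $L$-functions under enlarging $L$ and $S$. First I would record the key compatibility of $L$-functions: for $\chi'$ a character of $\cG_{L'}$ restricting to $\chi$ on $\cG_L$, comparing Euler factors gives $L_{S',T}(s,\chi') = \prod_{v\in S'\setminus S}(1-\chi'(\Fr_v)\N v^{-s})\cdot L_{S,T}(s,\chi)$ when $\chi'$ is nontrivial on none of the relevant inertia groups (which holds since the places of $S'\setminus S$ are unramified), and I would translate this into the equivariant statement $\pi_{L'/L}(\Theta_{L',S',T}(s)) = \bigl(\prod_{v\in S'\setminus S}(1-\Fr_v^{-1}\N v^{-s})\bigr)\Theta_{L,S,T}(s)$, where $\pi_{L'/L}:\bbC[\cG_{L'}]\to\bbC[\cG_L]$ is the natural projection and $\Fr_v$ on the right is the image of the Frobenius. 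Taking the $r$-th leading term at $s=0$ — noting that each factor $1-\Fr_v^{-1}\N v^{-s}$ is holomorphic and nonvanishing in leading order at $s=0$ since $v$ does not split completely in $L$ (as $V$ is unchanged, $v\notin V$, and $S'\setminus S$ is disjoint from $V$) — yields $\pi_{L'/L}(\Theta_{L',S',T}^{(r)}(0)) = \bigl(\prod_{v\in S'\setminus S}(1-\Fr_v^{-1})\bigr)\Theta_{L,S,T}^{(r)}(0)$.

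Next I would handle the algebraic side: the norm map $\N_{L'/L}^r : \bigcap_{\cG_{L'}}^r \cO_{L',S',T}^\times \to \bigcap_{\cG_L}^r \cO_{L,S,T}^\times$ (more precisely, the map applying the norm $\N_{\Gal(L'/L)}$ in each exterior factor, followed by the norm on units $\cO_{L',S',T}^\times \to \cO_{L,S,T}^\times$, noting $S\subset S'$ so $\cO_{L,S',T}^\times\supset\cO_{L,S,T}^\times$ but the $(S,T)$-units are landed in correctly after taking $\cG_{L'}/\cG_L$-norms) intertwines the regulator maps. Concretely, for $v\in V$ with fixed place $w'$ of $L'$ above $v$ and $w$ of $L$ above $v$, one checks that $v^\ast_{L'} \circ \lambda_{L',S'}$ composed with the norm corresponds to $v^\ast_L \circ \lambda_{L,S}$ under $\pi_{L'/L}$ — this uses that $v$ splits completely in $L'$ (so in $L$), that $\lambda$ is compatible with field extension via the sum-over-places, and that the absolute values at places above $w$ are all equal on elements of $L$. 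The upshot is $\pi_{L'/L}(R_V^{L'}(x)) = R_V^{L}(\N_{L'/L}^r(x))$ for all $x\in\bigwedge_{\cG_{L'}}^r\cO_{L',S',T}^\times\otimes\bbQ$; since the same formula with scalars works on Rubin's lattices, applying it to $x=\varep_{L',S',V}$ and combining with the $L$-function computation gives $R_V^L(\N_{L'/L}^r(\varep_{L',S',V})) = \bigl(\prod_{v\in S'\setminus S}(1-\Fr_v^{-1})\bigr)\Theta_{L,S,T}^{(r)}(0) = R_V^L\bigl((\prod_{v\in S'\setminus S}(1-\Fr_v^{-1}))\varep_{L,S,V}\bigr)$.

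Finally I would conclude by injectivity. Both $\N_{L'/L}^r(\varep_{L',S',V})$ and $(\prod_{v\in S'\setminus S}(1-\Fr_v^{-1}))\varep_{L,S,V}$ lie in $\bbQ\otimes_\bbZ\Lambda_{L,S,T}^r$ — for the first this requires checking that applying the norm does not increase the vanishing orders, i.e. that $\N_{L'/L}^r$ maps $\Lambda_{L',S',T}^r$ into $\bbQ\otimes\Lambda_{L,S,T}^r$, which follows from the behavior of idempotents $e_\chi$ under $\pi_{L'/L}$ and the fact that $r(\chi,S)\leq r(\chi',S')$ for compatible characters; for the second it is immediate from $\varep_{L,S,V}\in\Lambda_{L,S,T}^r$. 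Then Lemma \ref{reginj} says $R_V^L$ is injective on $\bbQ\otimes_\bbZ\Lambda_{L,S,T}^r$, so the two elements agree, proving the proposition. The case $r=0$ is handled separately (and more simply): there $\varep_{?,?,V}=\Theta_{?,?,T}(0)$ by Remark \ref{rzero}, and the identity reduces directly to the $L$-function compatibility specialized at $s=0$ via the natural map $\bbZ[\cG_{L'}]\to\bbZ[\cG_L]$.

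The main obstacle I expect is the careful bookkeeping in the second paragraph: verifying that the norm map on Rubin's lattices is well-defined with the correct $(S,T)$-unit target (the subtlety being that $\cO_{L',S',T}^\times$ involves the larger set $S'$ while the target involves $S$, so one genuinely needs Proposition \ref{nr}-type compatibility built into the regulator identity rather than a naive functoriality), and confirming the compatibility $\pi_{L'/L}\circ R_V^{L'} = R_V^L \circ \N_{L'/L}^r$ at the level of Rubin's lattices $\bigcap^r$ and not merely $\bigwedge^r\otimes\bbQ$ — though since both sides are determined by their values after tensoring with $\bbQ$ and the regulator is injective there, this last point is not serious. Everything else is a routine comparison of Euler products and a leading-term computation.
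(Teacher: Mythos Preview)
Your approach is essentially the same as the paper's: compute $R_V$ of both sides via the known functoriality of equivariant $L$-functions (the paper cites \cite[Proposition 1.8, Chpt.\ IV]{T} for $\pi_{L'/L}(\Theta_{L',S',T}^{(r)}(0)) = \prod_{v\in S'\setminus S}(1-\Fr_v^{-1})\,\Theta_{L,S,T}^{(r)}(0)$), then invoke the injectivity of $R_V$ from Lemma~\ref{reginj}. Two small corrections: first, the norm $\N_{L'/L}^r(\varep_{L',S',V})$ lands in $\bbQ\otimes_\bbZ\Lambda_{L,S',T}^r$ (with $S'$, not $S$), since norms of $(S',T)$-units are only $(S',T)$-units --- the paper applies Lemma~\ref{reginj} to $(L,S',V)\in\Omega$ and views $\varep_{L,S,V}$ inside $\bbQ\otimes\Lambda_{L,S',T}^r$ via the inclusion $\cO_{L,S,T}^\times\hookrightarrow\cO_{L,S',T}^\times$; second, your claim that $v\in S'\setminus S$ does not split completely in $L$ is unjustified and unnecessary --- the Euler factor $1-\Fr_v^{-1}\N v^{-s}$ is holomorphic at $s=0$ regardless, and that is all the leading-term computation requires.
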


\begin{proof}
It is easy to see that $\N_{L'/L}^r(\varep_{L',S',V}) \in \bbQ \otimes_\bbZ \Lambda_{L,S',T}^r$. Hence, by Lemma \ref{reginj}, it is enough to check that 
$$R_V(\N_{L'/L}^r(\varep_{L',S',V}))=R_V((\prod_{v \in S'\setminus S}(1-\Fr_v^{-1}))\varep_{L,S,V}).$$
The left hand side is equal to the image of $\Theta_{L',S',T}^{(r)}(0)$ in $\bbR[\cG_L]$, and hence to $\prod_{v \in S' \setminus S}(1-\Fr_v^{-1})\Theta_{L,S,T}^{(r)}(0)$ (see \cite[Proposition 1.8, Chpt. IV]{T}). The right hand side is equal to $\prod_{v \in S' \setminus S}(1-\Fr_v^{-1})\Theta_{L,S,T}^{(r)}(0)$, so we complete the proof.
\end{proof}

\begin{proposition}[{\cite[Lemma 5.1 (iv) and Proposition 5.2]{R2}}] \label{ordr}
Let $(L,S,V), (L,S',V') \in \Omega$, and suppose that $S \subset S'$, $V\subset V'$ and $S'\setminus S=V'\setminus V$. Put 
$$\Phi_{V',V}=\sgn(V',V)\bigwedge_{v\in V'\setminus V}(\sum_{\sigma\in \cG_L}\ord_w(\sigma(\cdot))\sigma^{-1})\in \bigwedge_{\cG_L}^{r'-r}\Hom_{\cG_L}(\cO_{L,S',T}^\times, \bbZ[\cG_L]),$$
where $r =|V|$, $r'=|V'|$, and $\sgn(V',V)=\pm 1$ is defined by 
$$(\bigwedge_{v\in V}v^\ast)\circ(\bigwedge_{v\in V'\setminus V}v^\ast)=\sgn(V',V)\bigwedge_{v\in V'}v^\ast \quad \mbox{in} \quad \Hom_{\cG_L}(\bigwedge_{\cG_L}^{r'}Y_{L,S'}, \bbZ[\cG_L]).$$
Then we have 
$$\Phi_{V',V}(\Lambda_{L,S',T}^{r'})\subset \Lambda_{L,S,T}^r$$
and 
$$\Phi_{V',V}(\varep_{L,S',V'})=\varep_{L,S,V}.$$
\end{proposition}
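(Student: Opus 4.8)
The plan is to reduce everything to the case $r = 0$ via the same strategy used in the proof of Proposition \ref{nr}: first verify that $\Phi_{V',V}(\varep_{L,S',V'})$ lies in $\bbQ \otimes_\bbZ \Lambda_{L,S,T}^{r}$, and then check the defining regulator identity, invoking the injectivity of $R_V$ on $\bbQ \otimes_\bbZ \Lambda_{L,S,T}^r$ (Lemma \ref{reginj}). For the membership claim, I would first show the easier inclusion $\Phi_{V',V}(\Lambda_{L,S',T}^{r'}) \subseteq \Lambda_{L,S,T}^r$ directly: the map $\Phi_{V',V}$, being built from $\bbZ[\cG_L]$-valued homomorphisms, sends $\bigcap_{\cG_L}^{r'}$ into $\bigcap_{\cG_L}^{r}$, and the $\chi$-vanishing condition is preserved because $r(\chi,S) \le r(\chi,S')$ and $\Phi_{V',V}$ commutes with the idempotents $e_\chi$; one also needs that $\Phi_{V',V}$ lands in $\cO_{L,S,T}^\times$ rather than just $\cO_{L,S',T}^\times$, which follows from the explicit shape of the maps $\sum_\sigma \ord_w(\sigma(\cdot))\sigma^{-1}$ for $w \mid v \in S' \setminus S = V' \setminus V$ — applying such a valuation map kills precisely the contribution of the places in $S' \setminus S$, so the image has trivial valuation outside $S_L$.

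The core of the argument is the regulator computation. One wants
\[
R_V(\Phi_{V',V}(\varep_{L,S',V'})) = \Theta_{L,S,T}^{(r)}(0).
\]
Here I would expand $R_V = \bigwedge_{v \in V}(v^\ast \circ \lambda_{L,S})$ and $\Phi_{V',V} = \sgn(V',V)\bigwedge_{v \in V' \setminus V}(\sum_\sigma \ord_w(\sigma(\cdot))\sigma^{-1})$, and combine the two exterior products. Using the definition of $\sgn(V',V)$, the composite $R_V \circ \Phi_{V',V}$ should be identified, up to sign bookkeeping, with $R_{V'}$ composed with a "mixed archimedean/non-archimedean regulator" — more precisely, with the full regulator $R_{V'}$ evaluated on $\varep_{L,S',V'}$, after one accounts for the fact that $\lambda_{L,S}$ and $\lambda_{L,S'}$ differ only by the $S' \setminus S$ components and that $-\log|a|_w = \ord_w(a)\log\N v$ at the finite places $v \in S' \setminus S$ where... actually, the cleaner route is Rubin's: apply $R_{V'}$ to $\varep_{L,S',V'}$, which by the Rubin-Stark conjecture for $(L,S',V')$ equals $\Theta_{L,S',T}^{(r')}(0)$, and then relate $\Theta_{L,S',T}^{(r')}(0)$ to $\Theta_{L,S,T}^{(r)}(0)$ by the standard functional-equation/order-of-vanishing comparison (the analogue of \cite[Proposition 1.8, Chpt. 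IV]{T}), since adding the completely split places $V' \setminus V$ to $S$ raises the order of vanishing of each relevant $L$-function by exactly $|V' \setminus V|$ with leading coefficient adjusted by the expected factor. The sign $\sgn(V',V)$ is engineered precisely so that these two computations match.

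The main obstacle I anticipate is the sign and multilinear-algebra bookkeeping: verifying that $R_V \circ \Phi_{V',V} = \pm R_{V'}$ on the relevant subspace requires carefully tracking how the ordered exterior products $\bigwedge_{v \in V}$ and $\bigwedge_{v \in V' \setminus V}$ interleave to give $\bigwedge_{v \in V'}$, and confirming that the sign produced matches the definition of $\sgn(V',V)$ given in terms of the $v^\ast$ on $Y_{L,S'}$ rather than on $\cO^\times_{L,S',T}$ — i.e., one must check the sign is "the same" whether computed on the lattice $Y_{L,S'}$ or transported through $\lambda_{L,S'}$. This is routine but error-prone. Everything else (the membership claims, the reduction via $R_V$-injectivity, and the $L$-function comparison) is a direct adaptation of the arguments already given for Lemma \ref{reginj} and Proposition \ref{nr}, so I would keep those parts brief and refer to \cite[Lemma 5.1, Proposition 5.2]{R2} for the details.
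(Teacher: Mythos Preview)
Your proposal is correct and follows essentially the same approach as the paper: establish the inclusion $\Phi_{V',V}(\Lambda_{L,S',T}^{r'}) \subset \Lambda_{L,S,T}^r$, reduce the second claim via Lemma~\ref{reginj} to checking $R_V(\Phi_{V',V}(\varep_{L,S',V'})) = \Theta_{L,S,T}^{(r)}(0)$, and verify this using the identity $v^\ast \circ \lambda_{L,S'} = (\log\N v)\sum_\sigma \ord_w(\sigma(\cdot))\sigma^{-1}$ for $v \in V'\setminus V$ together with the comparison $\Theta_{L,S',T}^{(r')}(0) = (\prod_{v \in V'\setminus V}\log\N v)\,\Theta_{L,S,T}^{(r)}(0)$. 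The paper makes the membership step more explicit by choosing a splitting $\cO_{L,S',T}^\times \otimes \bbQ = (\cO_{L,S,T}^\times \otimes \bbQ) \oplus M$ with $\bigoplus \widetilde w : M \xrightarrow{\sim} \bigoplus \bbQ[\cG_L]$, but this is exactly the content of your remark that the valuation maps kill the $S'\setminus S$ contribution.
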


\begin{proof}
Put $\Phi=\Phi_{V',V}$, for simplicity. First, we prove that 
\begin{eqnarray}
\Phi(\Lambda_{L,S',T}^{r'})\otimes_\bbZ\bbQ =\Lambda_{L,S,T}^r\otimes_\bbZ\bbQ. \label{phiq}
\end{eqnarray}
There is a split exact sequence of $\bbQ[\cG_L]$-modules:
$$0\longrightarrow \cO_{L,S,T}^\times \otimes_\bbZ\bbQ \longrightarrow \cO_{L,S',T}^\times \otimes_\bbZ\bbQ \stackrel{\bigoplus_{v \in S'\setminus S}\widetilde w}{\longrightarrow} \bigoplus_{v\in S'\setminus S}\bbQ[\cG_L] \longrightarrow 0,$$
where $\widetilde w =\sum_{\sigma \in \cG_L}\ord_w(\sigma(\cdot))\sigma^{-1}$. So we can choose a submodule $M \subset \cO_{L,S',T}^\times \otimes_\bbZ\bbQ$ such that 
$$\cO_{L,S',T}^\times \otimes_\bbZ\bbQ =(\cO_{L,S,T}^\times \otimes_\bbZ\bbQ)\oplus M$$
and 
$$\bigoplus_{v\in S'\setminus S}\widetilde w: M{\longrightarrow} \bigoplus_{v\in S'\setminus S}\bbQ[\cG_L]$$
is an isomorphism. Therefore, we have 
$$(\bigwedge_{\cG_L}^{r'}\cO_{L,S',T}^\times)\otimes_\bbZ\bbQ =\bigoplus_{i=0}^{r'}((\bigwedge_{\cG_L}^i\cO_{L,S,T}^\times)\otimes_\bbZ\bbQ)\otimes_{\bbQ[\cG_L]}\bigwedge_{\bbQ[\cG_L]}^{r'-i}M.$$
If $i > r$ then $\Phi(((\bigwedge_{\cG_L}^i\cO_{L,S,T}^\times)\otimes_\bbZ\bbQ)\otimes_{\bbQ[\cG_L]}\bigwedge_{\bbQ[\cG_L]}^{r'-i}M)=0$, and if $i<r$ then $\bigwedge_{\bbQ[\cG_L]}^{r'-i}M=0$. Hence we have 
$$\Phi(\bigwedge_{\cG_L}^{r'}\cO_{L,S',T}^\times)\otimes_\bbZ\bbQ=(\bigwedge_{\cG_L}^r\cO_{L,S,T}^\times) \otimes_\bbZ\bbQ.$$
Now (\ref{phiq}) follows by noting that $r(\chi,S')=r(\chi,S)+r'-r$ for every $\chi \in \widehat \cG_L$.

For the first assertion, by (\ref{phiq}), it is enough to prove that 
$\Phi(\bigcap_{\cG_L}^{r'}\cO_{L,S',T}^\times) \subset \bigcap_{\cG_L}^r\cO_{L,S,T}^\times$. Since $\cO_{L,S',T}^\times/\cO_{L,S,T}^\times$ 
is torsion-free, we have a surjection $\Hom_{\cG_L}(\cO_{L,S',T}^\times,\bbZ[\cG_L]) \rightarrow \Hom_{\cG_L}(\cO_{L,S,T}^\times,\bbZ[\cG_L])$. 
Now the assertion follows from the definition of Rubin's lattice.

For the second assertion, it is enough to show that 
$$R_V(\Phi(\varep_{L,S',V'}))=\Theta_{L,S,T}^{(r)}(0).$$
It is easy to see that for $v\in V'\setminus V$
$$\log\N v \sum_{\sigma \in \cG_L}\ord_w(\sigma(\cdot))\sigma^{-1}=v^\ast \circ \lambda_{L,S'},$$
and also that 
$$\Theta_{L,S',T}^{(r')}(0)=(\prod_{v\in V'\setminus V}\log\N v) \Theta_{L,S,T}^{(r)}(0).$$
Therefore, we have 
\begin{eqnarray}
R_V(\Phi(\varep_{L,S',V'}))&=& (\prod_{v\in V'\setminus V}\log\N v)^{-1}R_{V'}(\varep_{L,S',V'}) \nonumber \\
&=&(\prod_{v\in V'\setminus V}\log\N v)^{-1}\Theta_{L,S',T}^{(r')}(0) \nonumber \\
&=&\Theta_{L,S,T}^{(r)}(0). \nonumber
\end{eqnarray}
\end{proof}

\subsection{Refined conjectures} \label{refconj}
In this subsection, we propose the main conjectures. We keep the notations in \S \ref{not}. We also keep on assuming Conjecture \ref{rsconj} is true 
for all $(L,S,V) \in \Omega$.
Fix $(L,S,V),(L',S',V') \in \Omega$ such that $L \subset L'$, $S \subset S'$, and $V \supset V'$. We also use the notations defined in \S \ref{alg}, taking $G=\cG_{L'}$ and $H=\Gal(L'/L)$. For convenience, we record the list of the notations here (some new notations are added).
\begin{itemize}
\item{$\cG_L=\Gal(L/k)$,}
\item{$\cG_{L'}=\Gal(L'/k)$,}
\item{$G(L'/L)=\Gal(L'/L)$,}
\item{$r=|V|$,}
\item{$r'=|V'|$,}
\item{$\varep_{L,S,V} \in \bigcap_{\cG_L}^r \cO_{L,S,T}^\times \ (\mbox{resp. }\varep_{L',S',V'}\in \bigcap_{\cG_{L'}}^{r'}\cO_{L',S',T}^\times)$: Rubin-Stark unit for $(L,S,V)$ (resp. $(L',S',V')$) (see \S \ref{secrsconj}).}
\item{$d=r-r'(\geq 0)$,}
\item{$I_{L'/L}=I_{G(L'/L)}=\Ker(\bbZ[\cG_{L'}]\longrightarrow \bbZ[\cG_L])$,}
\item{$I(L'/L)=I(G(L'/L))=\Ker(\bbZ[G(L'/L)]\longrightarrow \bbZ)$.}
\end{itemize}
For $n\in\bbZ_{\geq0},$
\begin{itemize}
\item{$Q_{L'/L}^n=Q_{G(L'/L)}^n=I_{L'/L}^n/I_{L'/L}^{n+1}$,}
\item{$Q(L'/L)^n=Q(G(L'/L))^n= I(L'/L)^n/I(L'/L)^{n+1}.$}
\end{itemize}
Recall that there is a natural isomorphism
$$\bbZ[\cG_L]\otimes_\bbZ Q(L'/L)^n \simeq Q_{L'/L}^n$$
(see (\ref{eqaug})).

Recall the definition of ``higher norm'' (Definition \ref{defnorm}). In the case $r' \geq 1$, the $d$-th norm  
$$\N_{L'/L}^{(r',d)}=\N_{G(L'/L)}^{(r',d)} : \bigcap_{\cG_{L'}}^{r'}\cO_{L',S',T}^\times \longrightarrow (\bigcap_{\cG_{L'}}^{r'}\cO_{L',S',T}^\times )\otimes_\bbZ \bbZ[G(L'/L)]/I(L'/L)^{d+1}$$
is defined by 
$$\N_{L'/L}^{(r',d)}(a)=\sum_{\sigma \in G(L'/L)}\sigma a \otimes \sigma^{-1},$$
and in the case $r'=0$, $\N_{L'/L}^{(0,d)}$ is defined to be the natural map 
$$\bbZ[\cG_{L'}] \longrightarrow \bbZ[\cG_{L'}]/I_{L'/L}^{d+1}.$$

In the case $r'\geq 1$, define 
$$i : (\bigcap_{\cG_L}^{r'}\cO_{L,S',T}^\times) \otimes_\bbZ Q(L'/L)^d \hookrightarrow (\bigcap_{\cG_{L'}}^{r'}\cO_{L',S',T}^\times) \otimes_\bbZ \bbZ[G(L'/L)]/I(L'/L)^{d+1}$$
to be the canonical injection in Lemma \ref{inj}. In the case $r'=0$, define 
$$i: (\bigcap_{\cG_L}^0 \cO_{L,S',T}^\times) \otimes_\bbZ Q(L'/L)^d \simeq Q_{L'/L}^d \hookrightarrow \bbZ[\cG_{L'}]/I_{L'/L}^{d+1}$$
to be the inclusion.

\begin{conjecture} \label{descent}
$$\N_{L'/L}^{(r',d)}(\varep_{L',S',V'}) \in \Im i.$$

\end{conjecture}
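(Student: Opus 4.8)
\textbf{Proof proposal for Conjecture \ref{descent}.}

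The plan is to reduce the membership $\N_{L'/L}^{(r',d)}(\varep_{L',S',V'}) \in \Im i$ to a statement that can be checked, after tensoring with $\bbQ$, purely in terms of the analytic regulator and the known norm-compatibility of Rubin-Stark units (Proposition \ref{nr}), and then to recover integrality from the torsion-freeness statements packaged in Lemma \ref{inj}. First I would observe that $i$ has torsion-free cokernel (this is explicit in the proof of Lemma \ref{inj}: the cokernel embeds into $\Hom_G(\Ker\kappa,\bbZ[G])$, and likewise the second map into $\bbZ[H]/I(H)^{d+1}$ is injective on the torsion-free lattice $\bigcap_G^{r'}M$). Consequently, to show $\N_{L'/L}^{(r',d)}(\varep_{L',S',V'})$ lies in $\Im i$ it suffices to show it lies in $(\Im i)\otimes_\bbZ\bbQ$, i.e.\ to work rationally; the integral conclusion is then automatic. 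So the problem becomes: identify the image of $\N_{L'/L}^{(r',d)}$ on $\bbQ\otimes\bigcap_{\cG_{L'}}^{r'}\cO_{L',S',T}^\times$ and show $\varep_{L',S',V'}$ lands in the sub where $i$ is surjective rationally.

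Next I would use the characterization of Rubin's lattice via evaluators (Remark \ref{rlat}) together with Theorem \ref{thminj}: an element $\alpha\in(\bigcap_{\cG_L}^{r'}\cO_{L,S',T}^\times)\otimes_\bbZ Q(L'/L)^d$ is determined by the collection of values $\Phi^H(\alpha)\in Q_{L'/L}^d$ as $\Phi$ ranges over $\bigwedge_{\cG_{L'}}^{r'}\Hom_{\cG_{L'}}(\cO_{L',S',T}^\times,\bbZ[\cG_{L'}])$, and that assignment is injective. Dually, to say $\N_{L'/L}^{(r',d)}(\varep_{L',S',V'})\in\Im i$ rationally, it is enough to verify a compatibility: for the relevant evaluators $\Phi$, the element $\varphi^H\bigl(\N_{L'/L}^{(r',d)}(\varep_{L',S',V'})\bigr)$ (computed via the pairing $\alpha$) actually depends only on the image of $\varep_{L',S',V'}$ in the appropriate $\cG_L$-module, which is the content of the ``$\Phi(m)=\Phi^H(\N_H^{(r,d)}(m))$'' identity of Proposition \ref{propnorm}. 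The subtlety is that Proposition \ref{propnorm} is proved only for $r=0,1$ or $d=0$, whereas here $r'$ and $d$ are arbitrary; so the honest route is not to invoke that identity directly but to argue that $\varep_{L',S',V'}$ itself, being a norm-coherent Rubin-Stark unit (Proposition \ref{nr}), has the right shape.

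Concretely, the mechanism I expect to work is this. By Proposition \ref{nr} applied to $L'/L$ with $S'=S'$, we have $\N_{L'/L}^{r'}(\varep_{L',S',V'})=(\prod_{v\in(S')'\setminus S'}(1-\Fr_v^{-1}))\varep_{L,S',V'}$ — more precisely the classical $0$-th norm recovers $\varep_{L,S',V'}$ up to the obvious Euler factors, here trivial since both use $S'$. The $d$-th norm $\N_{L'/L}^{(r',d)}$ refines this: its reduction mod $I(L'/L)$ is $\N_{L'/L}^{r'}$, and its ``leading'' behaviour in $Q(L'/L)^d$ should, after dividing by $\prod_{v}(1-\Fr_v^{-1})$, be governed by the same unit $\varep_{L,S',V'}\in\bigcap_{\cG_L}^{r'}\cO_{L,S',T}^\times$ — which is exactly the source of $i$. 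So the plan is: (i) reduce to $\bbQ$-coefficients using the torsion-freeness from Lemma \ref{inj}; (ii) use Lemma \ref{reginj} (injectivity of $R_{V'}$ on $\bbQ\otimes\Lambda^{r'}$) to reduce the identity to an equality of regulator values; (iii) compute both sides of the would-be equality $\N_{L'/L}^{(r',d)}(\varep_{L',S',V'})=i(\text{something})$ against evaluators, reducing via Theorem \ref{thminj} to checking each $\Phi^H$, and close the loop using $\Theta^{(r')}$-compatibility.

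The main obstacle, I expect, is step (iii): controlling the higher-order part of $\N_{L'/L}^{(r',d)}$ in $Q(L'/L)^d$ for $d\geq2$ when $r'\geq2$. For $r'\le 1$ or $d=0$ Proposition \ref{propnorm} already hands us the needed evaluator identity, and the argument is short. For larger $r'$ and $d$ the combinatorics of the exterior-power twist in Proposition \ref{propphi} interacts with the augmentation filtration in a way not covered by the lemmas proved so far, and one likely has to either strengthen Proposition \ref{propnorm} (which the author flags as expected but unproven — hence Conjecture \ref{phiconj}) or restrict Conjecture \ref{descent}'s proof to the cases where it is unconditional, treating the general case conjecturally. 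In other words, I anticipate that Conjecture \ref{descent} is genuinely a conjecture in general and provable outright only in the ``rank $\le 1$'' situation relevant to Darmon's setting, where all the auxiliary inputs are available.
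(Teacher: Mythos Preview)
There is a genuine gap in your reduction step. You assert that the cokernel of $i$ is torsion-free, citing the proof of Lemma~\ref{inj}, but that proof only shows the cokernel of the \emph{first} constituent map $\bigcap_{G/H}^{r'}M^H\hookrightarrow\bigcap_{G}^{r'}M$ embeds in $\Hom_G(\Ker\kappa,\bbZ[G])$; this torsion-freeness is used there solely to preserve injectivity after tensoring with $Q(H)^d$. The full map $i$ in Conjecture~\ref{descent} lands in $(\bigcap_{G}^{r'}M)\otimes_\bbZ\bbZ[H]/I(H)^{d+1}$, whose cokernel by $\Im i$ is certainly not torsion-free for $d\ge 1$. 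More to the point, for $d\ge 1$ the group $Q(H)^d$ is finite, so the source of $i$ --- and hence $\Im i$ --- is pure torsion; thus $\Im i\otimes_\bbZ\bbQ=0$. Your proposed reduction would collapse the conjecture to a statement with no content, and the integral membership cannot be recovered from it. The conjecture is an inherently torsion/congruence statement, and the tools you invoke (analytic regulators, $\bbQ$-linear algebra, Lemma~\ref{reginj}) see none of it.

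The paper does not prove Conjecture~\ref{descent}: it is stated as a conjecture, with only $d=0$ noted as unconditional (Remarks~\ref{reminj}, \ref{remnorm}). In the application to Darmon's setting, a weakened version (Proposition~\ref{propdes}) is established by a direct computation with Kolyvagin's derivative operator and a Galois-cohomology vanishing argument for $H^1(G_{n_+},\cO^\times)$ --- an explicitly integral method, not a rational one. Your final paragraph's conclusion that the general case remains conjectural is correct, but the obstruction you name (the unproven evaluator identity of Proposition~\ref{propnorm}) is beside the point here; even granting that identity in full, your argument would still fail at the rational-reduction step.
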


\begin{remark}
When $d=0$, Conjecture \ref{descent} is true by Remarks \ref{reminj} and \ref{remnorm}.
\end{remark}

\begin{remark}
Conjecture \ref{descent} is related to the Kolyvagin's derivative construction, which is important in the theory of Euler systems (\cite{K}, \cite{R}) and Mazur-Rubin's Kolyvagin systems (\cite{MR1}). See Remark \ref{remkoly} for the detail.
\end{remark}

For $v\in V$, define
$$\varphi_v=\varphi_{v,L'/L} : L^\times \longrightarrow Q_{L'/L}^1$$
by $\varphi_v(a)=\sum_{\sigma\in\cG_L}(\rec_w(\sigma a)-1)\sigma^{-1},$ where $\rec_w$ is the local reciprocity map at $w$ (recall that $w$ is the fixed place of $L$ lying above $v$, see \S \ref{not}). Note that, by Proposition \ref{propphi}, $\bigwedge_{v \in V \setminus V'}\varphi_v \in \bigwedge_{\cG_L}^d \Hom_{\cG_L}(\cO_{L,S,T}^\times, Q_{L'/L}^1)$ induces a morphism
$$\bigcap_{\cG_L}^r\cO_{L,S,T}^\times \longrightarrow( \bigcap_{\cG_L}^{r'}\cO_{L,S,T}^\times )\otimes_\bbZ Q(L'/L)^d.$$

We define $\sgn(V,V')=\pm 1$ by 
$$(\bigwedge_{v\in V'}v^\ast)\circ(\bigwedge_{v\in V\setminus V'}v^\ast)=\sgn(V,V')\bigwedge_{v\in V}v^\ast \quad \mbox{in}\quad\Hom_{\cG_L}(\bigwedge_{\cG_L}^{r}Y_{L,S}, \bbZ[\cG_L]).$$

The following conjecture predicts that $\N_{L'/L}^{(r',d)}(\varep_{L',S',V'})$ is described in terms of $\varep_{L,S,V}$.

\begin{conjecture} \label{hnr}
Conjecture \ref{descent} holds, and we have  
$$i^{-1}(\N_{L'/L}^{(r',d)}(\varep_{L',S',V'}))=\sgn(V,V')(\prod_{v\in S'\setminus S}(1-\Fr_v^{-1}))(\bigwedge_{v \in V\setminus V'}\varphi_v)(\varep_{L,S,V}).$$
\end{conjecture}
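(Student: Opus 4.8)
Since the final statement is a conjecture, the realistic goal is a \emph{conditional} proof: I would reduce Conjecture \ref{hnr} to the equivariant leading term conjecture, granting the auxiliary Conjectures \ref{descent} and \ref{phiconj} (both of which will be unconditional in low rank). The plan has three stages. First, observe that Conjecture \ref{hnr} already contains the descent assertion $\N_{L'/L}^{(r',d)}(\varep_{L',S',V'})\in\Im i$, i.e.\ Conjecture \ref{descent}; I would handle this separately, since it is the arithmetically hard input (it is essentially a Kolyvagin-derivative divisibility, in the spirit of Mazur--Rubin, and is immediate for $d=0$ or $r'\le 1$). Granting it, $i^{-1}(\N_{L'/L}^{(r',d)}(\varep_{L',S',V'}))$ is a well-defined element of $(\bigcap_{\cG_L}^{r'}\cO_{L,S',T}^\times)\otimes_\bbZ Q(L'/L)^d$, and the remaining content is the explicit formula.

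Second, I would test the desired identity against ``evaluators''. By Theorem \ref{thminj}, applied with $M=\cO_{L',S',T}^\times$, $G=\cG_{L'}$, $H=G(L'/L)$ and the given $r',d$, the map
\[
\alpha\ \longmapsto\ \bigl(\Phi\mapsto\Phi^H(\alpha)\bigr),\qquad \Phi\in\bigwedge_{\cG_{L'}}^{r'}\Hom_{\cG_{L'}}(\cO_{L',S',T}^\times,\bbZ[\cG_{L'}]),
\]
is injective on $(\bigcap_{\cG_L}^{r'}\cO_{L,S',T}^\times)\otimes_\bbZ Q(L'/L)^d$. Hence it suffices to prove, for every such $\Phi$, the equality in $Q_{L'/L}^d$
\[
\Phi^H\bigl(i^{-1}(\N_{L'/L}^{(r',d)}(\varep_{L',S',V'}))\bigr)=\sgn(V,V')\Bigl(\prod_{v\in S'\setminus S}(1-\Fr_v^{-1})\Bigr)\,\Phi^H\Bigl(\bigl(\textstyle\bigwedge_{v\in V\setminus V'}\varphi_v\bigr)(\varep_{L,S,V})\Bigr).
\]
By Proposition \ref{propnorm} (when $d=0$ or $r'\le 1$) and by its conjectural generalization Conjecture \ref{phiconj} in general, the left-hand side equals $\Phi(\varep_{L',S',V'})$. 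So the problem reduces to showing, for every $\Phi$,
\[
\Phi(\varep_{L',S',V'})=\sgn(V,V')\Bigl(\prod_{v\in S'\setminus S}(1-\Fr_v^{-1})\Bigr)\,\Phi^H\Bigl(\bigl(\textstyle\bigwedge_{v\in V\setminus V'}\varphi_v\bigr)(\varep_{L,S,V})\Bigr),
\]
which is precisely the reformulation of Burns's refined abelian Stark conjecture (Conjecture \ref{bconj}): the right-hand side is $\Phi^H(R(\varep_{L,S,V}))$ for the reciprocity-map regulator $R$ assembled from the $\varphi_{v,L'/L}$, and matching the Euler factors $\prod(1-\Fr_v^{-1})$ and the sign $\sgn(V,V')$ is a bookkeeping exercise using the norm-compatibility of Rubin--Stark units (Propositions \ref{nr} and \ref{ordr}) and the wedge-ordering conventions.

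Third, invoke \cite[Theorem 3.1]{B}: (most of) Conjecture \ref{bconj} is implied by the equivariant leading term conjecture, a case of the ETNC, which is known for abelian $L/\bbQ$ by Burns--Greither and Flach (\cite{BG}, \cite{F}). Combining the three stages, Conjecture \ref{hnr} follows from the leading term conjecture once Conjectures \ref{descent} and \ref{phiconj} are available. The genuine obstacles are exactly these two auxiliary inputs: (a) the descent/integrality statement, Conjecture \ref{descent}, which is not formal and seems to require a Kolyvagin-systems argument; and (b) the higher-rank version of Proposition \ref{propnorm}, Conjecture \ref{phiconj}. Both become trivial in the rank-one situation $r'\le 1$ (or $r\le 1$), which is why this strategy yields an unconditional proof of (most of) Darmon's conjecture, where the ETNC itself is a theorem.
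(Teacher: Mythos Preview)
Your approach is essentially the paper's own: the reduction via Theorem~\ref{thminj} and Conjecture~\ref{phiconj} to Conjecture~\ref{bconj}, followed by Burns's Theorem~\ref{bthm}, is exactly the content of Theorem~\ref{rel} and Theorem~\ref{mainthm}. One small correction: you assert that Conjecture~\ref{descent} is ``immediate for $d=0$ or $r'\le 1$'', but in fact only the case $d=0$ is trivial; even for $r'=1$ in the Darmon setting the paper does not prove Conjecture~\ref{descent} outright but only a weakened version (Proposition~\ref{propdes}, with $S$ enlarged to some $\Sigma$), and this already requires the Kolyvagin-derivative computation you allude to.
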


\begin{remark} \label{remhnr}
When $d=0$, Conjecture \ref{hnr} is true by ``norm relation'' (Proposition \ref{nr}). (See Remarks \ref{reminj} and \ref{remnorm}.)
\end{remark}

\begin{remark} \label{gross}
When $r'=0$, by Remark \ref{rzero}, one sees that Conjecture \ref{hnr} is equivalent to the ``Gross-type refinement of the Rubin-Stark conjecture" (\cite[Conjecture 5.3.3]{P}), which 
generalizes Gross's conjecture (\cite[Conjecture 4.1]{G}), see \cite[Proposition 5.3.6]{P}.
\end{remark}

\begin{remark} \label{hnrdar}
When $r'=1$, Conjecture \ref{hnr} is closely related to Darmon's conjecture (\cite[Conjecture 4.3]{D}). The detailed explanation is 
given in \S \ref{appl}.
\end{remark}

\begin{proposition} \label{propred}
It is sufficient to prove Conjecture \ref{hnr} in the following case: \\ 
\quad $S=S'$, \\
\quad $r= \min \{ |S|-1, |\{ v\in S \ | \ v \mbox{ splits completely in }L \}| \} =:r_{L,S}$, \\
\quad $r'=\min\{ |S|-1, |\{ v\in S \ | \ v \mbox{ splits completely in } L' \}| \} =:r_{L',S}$.
\end{proposition}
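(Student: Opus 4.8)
The plan is to reduce Conjecture \ref{hnr} for arbitrary admissible data $(L,S,V),(L',S',V')$ to the asserted special case by two successive normalizations: first making the two sets of places equal, then making both ranks maximal.

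\emph{Reduction to $S=S'$.} Given the general data, one first checks that $(L,S',V)\in\Omega$ (enlarging the first set from $S$ to $S'$ keeps it disjoint from $T$ and containing all infinite places and all places ramified in $L$, since these are ramified in $L'$, and $\cO_{L,S',T}^\times$ has the same torsion subgroup as $\cO_{L,S,T}^\times$, hence is torsion-free). I would then compare Conjecture \ref{hnr} for $(L,S,V),(L',S',V')$ with that for $(L,S',V),(L',S',V')$. The left-hand side and the $\Im i$-membership assertion involve only $L',S',V'$, and they correspond under the natural map
$$\Big(\bigcap\nolimits_{\cG_L}^{r'}\cO_{L,S,T}^\times\Big)\otimes_\bbZ Q(L'/L)^d\longrightarrow\Big(\bigcap\nolimits_{\cG_L}^{r'}\cO_{L,S',T}^\times\Big)\otimes_\bbZ Q(L'/L)^d,$$
which is injective because $\cO_{L,S',T}^\times/\cO_{L,S,T}^\times$, and hence the corresponding quotient of Rubin lattices, is torsion-free. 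On the right-hand side, the norm relation of Proposition \ref{nr}, applied with the upper field taken to be $L$ itself (so $\N^{r}_{L/L}=\Id$), gives $\varep_{L,S',V}=\big(\prod_{v\in S'\setminus S}(1-\Fr_v^{-1})\big)\varep_{L,S,V}$; since $\bigwedge_{v\in V\setminus V'}\varphi_v$ is $\bbZ[\cG_L]$-linear and the maps $\varphi_v$ do not depend on $S$, the two right-hand sides match. Hence it suffices to treat $S=S'$.

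\emph{Reduction to $r=r_{L,S}$ and $r'=r_{L',S}$.} Assume $S=S'$. If $d=r-r'=0$ then $V=V'$ and Conjecture \ref{hnr} is the norm relation (Remark \ref{remhnr}); so assume $d\geq1$. Since always $r\leq r_{L,S}$ and $r'\leq r_{L',S}$, it remains to handle the cases where at least one inequality is strict. If $r<r_{L,S}$ then $\varep_{L,S,V}=0$ by Remark \ref{rstriv}, so the right-hand side of Conjecture \ref{hnr} vanishes; if moreover $r'<r_{L',S}$, then $\varep_{L',S,V'}=0$ as well, the left-hand side is $i^{-1}(0)=0$, and the conjecture holds trivially. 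There remain the two \emph{mixed} cases: (a) $r=r_{L,S}$, $r'<r_{L',S}$, where the left-hand side vanishes and one must show $\big(\bigwedge_{v\in V\setminus V'}\varphi_v\big)(\varep_{L,S,V})=0$; and (b) $r<r_{L,S}$, $r'=r_{L',S}$, where the right-hand side vanishes and one must show $\N_{L'/L}^{(r',d)}(\varep_{L',S,V'})=0$ (then it lies in $\Im i$ with vanishing preimage, $i$ being injective). In both cases the tool is the defining membership $\varep\in\Lambda^\bullet$, i.e.\ $e_\chi\varep=0$ whenever $r(\chi)$ exceeds the rank, together with the splitting-theoretic formula for $r(\chi)$ recalled in \S\ref{not}. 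For instance, in case (b) every character $\chi$ of $\cG_{L'}$ factoring through $\cG_L$ satisfies $r(\chi)\geq r_{L,S}>r'$, so $e_\chi\varep_{L',S,V'}=0$, whence $\N_{L'/L}\varep_{L',S,V'}=0$; refining this — the hypotheses force $V'$ to be the \emph{full} set of places of $S$ splitting completely in $L'$, so the characters supporting $\varep_{L',S,V'}$ are highly nontrivial on $\Gal(L'/L)$ — upgrades it to $\N_{L'/L}^{(r',d)}(\varep_{L',S,V'})=0$. Case (a) is handled analogously by examining the kernel of $\bigwedge_{v\in V\setminus V'}\varphi_v$ on $\Lambda_{L,S,T}^r$; here one also uses that $\varphi_v=0$ whenever $v$ splits completely in $L'$, which disposes of part of the case at once.

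The first reduction is routine bookkeeping with the norm relation. The crux, and the main obstacle, is the vanishing required in the two mixed cases: it is \emph{not} a formal consequence of $\N_{L'/L}\varep=0$, since $\bbZ[G(L'/L)]/I(L'/L)^{d+1}$ carries $\bbZ$-torsion, so one must genuinely exploit that the auxiliary set of places ($V'$ in case (b), $V$ in case (a)) is the maximal admissible one for the relevant field, together with the constraints the description of $r(\chi)$ imposes on the characters appearing in the Rubin-Stark unit, and — for the statement $i^{-1}(\N_{L'/L}^{(r',d)}(\varep_{L',S,V'}))=0$ rather than merely that this element is torsion — the injectivity of $i$ from Lemma \ref{inj}.
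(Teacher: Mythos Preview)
Your reduction to $S=S'$ via the norm relation is correct and matches the paper, as does the doubly-strict case $r<r_{L,S}$, $r'<r_{L',S}$.

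The gap is in the two mixed cases. In case (b) you try to prove $\N_{L'/L}^{(r',d)}(\varep_{L',S,V'})=0$ \emph{directly} from character constraints, but your argument only yields $\N_{L'/L}\varep_{L',S,V'}=0$; the sentence ``refining this\dots upgrades it'' is not a proof, and as you yourself observe, torsion in $\bbZ[H]/I(L'/L)^{d+1}$ blocks any formal passage. The paper's approach is different and is the point you are missing: since the proposition is a \emph{reduction}, one is allowed to assume Conjecture~\ref{hnr} in the maximal case. Choose $\widetilde V\supset V'$ with $|\widetilde V|=r_{L,S}$ and set $\widetilde d=r_{L,S}-r'>d$. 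The assumed conjecture for $(\widetilde V,V')$ includes Conjecture~\ref{descent}, so $\N_{L'/L}^{(r',\widetilde d)}(\varep_{L',S,V'})$ lies in the image of $(\bigcap_{\cG_L}^{r'}\cO_{L,S,T}^\times)\otimes_\bbZ Q(L'/L)^{\widetilde d}$. As $I(L'/L)^{\widetilde d}\subset I(L'/L)^{d+1}$, this image dies under the projection $\bbZ[H]/I(L'/L)^{\widetilde d+1}\to\bbZ[H]/I(L'/L)^{d+1}$, whence $\N_{L'/L}^{(r',d)}(\varep_{L',S,V'})=0$.

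In case (a) you correctly handle the subcase where some $v\in V\setminus V'$ splits completely in $L'$ (then $\varphi_v=0$). For the remaining subcase your ``examining the kernel'' is too vague; the paper's argument is specific. One first shows that in this subcase necessarily $V=S\setminus\{v'\}$ for some $v'$ splitting completely in $L'$, and that \emph{all} of $S$ splits completely in $L$; hence $\varep_{L,S,V}$ lies in the $e_1$-component and the computation descends to $k$. One then invokes global reciprocity in the form $\sum_{v\in S\setminus V'}\varphi_v=0$ on $\cO_{k,S,T}^\times$: picking any $v''\in V\setminus V'$ and substituting $\varphi_{v''}=-\sum_{v\ne v''}\varphi_v$ converts the wedge into $\pm\big(\bigwedge_{v\in(S\setminus\{v''\})\setminus V'}\varphi_v\big)(\varep_{L,S,V})$, which vanishes because it now contains the factor $\varphi_{v'}$ with $v'$ split in $L'$.
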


\begin{proof}
From Proposition \ref{nr}, we may assume $S=S'$. When $r < r_{L,S}$ and $r' < r_{L',S}$, Conjecture \ref{hnr} is trivially true (see Remark \ref{rstriv}). When $r<r_{L,S}$ and $r'=r_{L',S}$, we have 
$$\N_{L'/L}^{(r',d)}(\varep_{L',S,V'})=0$$
if Conjecture \ref{hnr} is true when $r=r_{L,S}$ and $r'=r_{L',S}$. When $r=r_{L,S}$ and $r'<r_{L',S}$, we prove 
$$(\bigwedge_{v \in V\setminus V'}\varphi_v)(\varep_{L,S,V})=0.$$
If there exists $v \in V\setminus V'$ which splits completely in $L'$, this is clear. If all $v \in V \setminus V'$ don't split completely in $L'$, then there exists $v' \in S\setminus V$ which splits completely in $L'$, and we must have $V=S\setminus \{v'\}$.
By the product formula, we see that 
$$\sum_{v \in S\setminus V'}\varphi_{v,L'/L}=0 \quad \text{on} \quad \cO_{k,S,T}^\times.$$
Note that $\varep_{L,S,V} \in e_1(\bbQ\otimes_\bbZ \bigwedge_{\cG_L}^r \cO_{L,S,T}^\times)$ in this case. Hence, choosing any $v'' \in V \setminus V'$, we have 
$$(\bigwedge_{v\in V\setminus V'}\varphi_v)(\varep_{L,S,V})=\pm (\bigwedge_{v\in (S\setminus \{ v''\})\setminus V'}\varphi_{v})(\varep_{L,S,V}),$$
and the right hand side is $0$ since $v'$ splits completely in $L'$. 
\end{proof}

From now on we assume $S=S'$, $r=r_{L,S}$, and $r'=r_{L',S}$. 

\begin{proposition} \label{unram}
If every place in $V\setminus V'$ is finite and unramified in $L'$, then Conjecture \ref{hnr} is true.
\end{proposition}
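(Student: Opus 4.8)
The statement to prove is Proposition~\ref{unram}: if every place in $V \setminus V'$ is finite and unramified in $L'$, then Conjecture~\ref{hnr} holds. The plan is to reduce to the case $d=0$, where the conjecture is already known by the norm relation (Remark~\ref{remhnr}). The key observation is that for a finite place $v$ that is unramified in $L'$, the reciprocity map $\rec_w$ sends a $w$-unit to the Frobenius power determined by $\ord_w$; since the Rubin-Stark unit $\varep_{L,S,V}$ is an $(S,T)$-unit and the places of $V \setminus V'$ lie in $S$, one must be slightly careful, but the point is that $\varphi_v$ restricted to $\cO_{L,S,T}^\times$ will factor through valuations at the places above $v$, and because $v \in V$ splits completely in $L$ the local behaviour is controlled.

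First I would analyze the map $\varphi_v$ for $v \in V \setminus V'$ finite and unramified in $L'$. For such $v$, the fixed place $w$ of $L$ above $v$ is unramified in $L'$, so $\rec_w(a) \equiv \Fr_w^{\ord_w(a)} \pmod{I(L'/L)^2}$ — more precisely $\rec_w(u) = 1$ in $G(L'/L)$ for a $w$-unit $u$, and in general $\rec_w(a) = \Fr_w^{-\ord_w(a)}$ (with the appropriate sign convention). Hence for $a \in \cO_{L,S,T}^\times$ we get $\varphi_v(a) = \sum_{\sigma \in \cG_L}(\Fr_w^{-\ord_w(\sigma a)} - 1)\sigma^{-1}$. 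Now, since $v$ splits completely in $L$ and $\varep_{L,S,V} \in \Lambda_{L,S,T}^r$, I would express $(\bigwedge_{v \in V \setminus V'}\varphi_v)(\varep_{L,S,V})$ in terms of the dual maps $v^\ast$ and the decomposition groups. The crucial simplification is that $\Fr_w^{-\ord_w(\cdot)} - 1 \in I(L'/L)$, so the composite $\bigwedge_{v \in V \setminus V'}\varphi_v$ lands in $Q(L'/L)^d$ and can be computed degree by degree.

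The main technical step is to match the right-hand side of Conjecture~\ref{hnr} with $i^{-1}$ of the higher norm. I would use Proposition~\ref{ordr}: passing from $(L,S,V)$ with the larger $V$ to $(L,S',V')$ with $S' \setminus S = V \setminus V'$ (an auxiliary set of places), the Rubin-Stark units are related by $\Phi_{V,V'}(\varep_{L,S',V}) = \varep_{L,S,V'}$ where $\Phi_{V,V'}$ is built from the valuation maps $\sum_\sigma \ord_w(\sigma(\cdot))\sigma^{-1}$. Combining this with the norm relation of Proposition~\ref{nr} applied over $L'/L$ — noting that the places of $V \setminus V'$ being unramified contribute exactly the Euler factors $\prod_{v \in V \setminus V'}(1 - \Fr_v^{-1})$ — I would identify $\N_{L'/L}^{(r',d)}(\varep_{L',S,V'})$ with the image under $i$ of an explicit element involving $\varep_{L,S,V}$ and the $\varphi_v$'s. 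The sign $\sgn(V,V')$ bookkeeping comes from comparing the orderings in $\Phi_{V,V'}$ and in $\bigwedge_{v \in V \setminus V'}\varphi_v$.

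The hardest part, I expect, will be verifying Conjecture~\ref{descent} (that the higher norm lies in $\Im i$) in this setting and then pinning down the precise constant and sign. Once one knows that $\varphi_v$ for an unramified finite $v$ ``is'' a valuation map twisted by Frobenius, the $\N_{L'/L}^{(r',d)}$ of $\varep_{L',S,V'}$ should be computable via the compatibility of Rubin-Stark units under change of $S$ (Proposition~\ref{ordr}) and the norm-compatibility (Proposition~\ref{nr}), reducing everything to the already-known $d=0$ case of Remark~\ref{remhnr}; but threading the isomorphism $i$ and $Q(L'/L)^d \hookrightarrow \bbZ[G(L'/L)]/I(L'/L)^{d+1}$ through all of this, and checking that the two sides agree not just rationally but integrally (using Lemma~\ref{inj} and that $\bigcap$-lattices are torsion-free), is where the real care is needed.
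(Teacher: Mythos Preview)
Your approach is essentially the same as the paper's: shrink $S$ to $S\setminus W$ with $W=V\setminus V'$ (legitimate since these places are unramified in $L'$), use that $\varphi_v=(\sum_{\sigma}\ord_w(\sigma\cdot)\sigma^{-1})(\Fr_v-1)$ for unramified $v$ so that Proposition~\ref{ordr} computes the right-hand side as $\varep_{L,S\setminus W,V'}\prod_{v\in W}(\Fr_v-1)$, and match with the left-hand side via Proposition~\ref{nr}. The one point you leave vague that the paper makes explicit is the direct computation of $\N_{L'/L}^{(r',d)}(\varep_{L',S,V'})$: writing $\varep_{L',S,V'}=\prod_{v\in W}(1-\Fr_v^{-1})\varep_{L',S\setminus W,V'}$, one slides the factor $\prod_{v\in W}(1-\Fr_v^{-1})$ across the tensor in $\sum_\sigma \sigma(\cdot)\otimes\sigma^{-1}$ (it already lies in $I(L'/L)^d$), after which the remaining sum collapses to the ordinary norm and Remark~\ref{reminj} identifies it with $i(\varep_{L,S\setminus W,V'}\prod_{v\in W}(\Fr_v-1))$; no torsion-freeness or lattice subtleties are needed at the end.
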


\begin{proof}
We treat the case $r'\geq 1$. The proof for $r'=0$ is similar.

Put $W:=V\setminus V'$ for simplicity. Note that $(L',S\setminus W, V' ) \in \Omega$. By Proposition \ref{nr}, we have 
$$\varep_{L',S,V'}=\prod_{v\in W}(1-\Fr_v^{-1})\varep_{L',S\setminus W,V'}.$$
Hence, we have 
\begin{eqnarray}
\N_{L'/L}^{(r',d)}(\varep_{L',S,V'})&=& \sum_{\sigma \in G(L'/L)}\sigma \prod_{v\in W}(1-\Fr_v^{-1})\varep_{L',S\setminus W,V'} \otimes \sigma^{-1} \nonumber \\
&=& \sum_{\sigma \in G(L'/L)}\sigma \varep_{L',S\setminus W,V'} \otimes \sigma^{-1} \prod_{v\in W}(1-\Fr_v^{-1})\nonumber \\
&=&\N_{L'/L}\varep_{L',S\setminus W,V'} \prod_{v\in W}(\Fr_v-1) \nonumber \\
&\in& (\N_{L'/L}\bigcap_{\cG_{L'}}^{r'}\cO_{L',S,T}^\times )\otimes_{\bbZ}Q(L'/L)^d. \nonumber 
\end{eqnarray}
For every $v\in W$, we have 
$$\varphi_{v}=\sum_{\sigma \in \cG_L}\ord_{w}(\sigma (\cdot))\sigma^{-1}(\Fr_v-1).$$
(See \cite[Proposition 13, Chpt. XIII]{Se}.) So, by Proposition \ref{ordr}, we have 
$$\sgn(V,V')(\bigwedge_{v\in W}\varphi_v)(\varep_{L,S,V})=\varep_{L,S\setminus W,V'}\prod_{v\in W}(\Fr_v-1).$$
By Proposition \ref{nr} and Remark \ref{reminj}, we have 
$$\N_{L'/L}\varep_{L',S\setminus W,V'}\prod_{v\in W}(\Fr_v-1)=i(\varep_{L,S\setminus W,V'}\prod_{v\in W}(\Fr_v-1)),$$
hence the proposition follows. 
\end{proof}

The formulation of the following conjecture is a slight modification of \cite[Theorem 3.1]{B} (see also Theorem \ref{bthm} and Remark \ref{brem}).

\begin{conjecture} \label{bconj}
For every $\Phi \in \bigwedge_{\cG_{L'}}^{r'} \Hom_{\cG_{L'}}(\cO_{L',S,T}^\times, \bbZ[\cG_{L'}])$, we have
$$\Phi(\varep_{L',S,V'})\in I_{L'/L}^d$$
and 
$$\Phi(\varep_{L',S,V'})=\sgn(V,V')\Phi^{G(L'/L)}((\bigwedge_{v \in V\setminus V'}\varphi_v)(\varep_{L,S,V})) \quad \mbox{in}\quad Q_{L'/L}^d.$$
\end{conjecture}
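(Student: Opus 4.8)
\textbf{Proof proposal for Conjecture \ref{bconj}.}
The plan is to reduce the statement to a computation with Rubin-Stark units under norm maps and to the reciprocity-law description of the maps $\varphi_v$, following the strategy already visible in the proof of Proposition \ref{unram}. We first treat the case $r'\geq 1$ (the case $r'=0$ being handled by Remark \ref{gross} and the corresponding ``Gross-type'' argument, or directly since then $\varep_{L',S,V'}=\Theta_{L',S,T}(0)$ and everything is explicit). Fix $\Phi=\psi_1\wedge\cdots\wedge\psi_{r'}$ with $\psi_j\in\Hom_{\cG_{L'}}(\cO_{L',S,T}^\times,\bbZ[\cG_{L'}])$; by multilinearity and the density of such decomposable elements after $\otimes\bbQ$ it suffices to argue for these, keeping track of integrality at the end via the fact that $\Phi(\varep_{L',S,V'})\in\bbZ[\cG_{L'}]$ by the very definition of Rubin's lattice.

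The first main step is to establish the membership $\Phi(\varep_{L',S,V'})\in I_{L'/L}^d$. Here I would invoke the norm relation (Proposition \ref{nr}) together with the fact that $\varep_{L',S,V'}$ lies over a set $V'$ of places splitting completely in $L'$: working rationally, $\N_{L'/L}^{r'}(\varep_{L',S,V'})=(\prod_{v\in S'\setminus S}(1-\Fr_v^{-1}))\varep_{L,S,V'}$, but more relevantly one applies the analogous relations for the intermediate fields between $L$ and $L'$ and the observation that each $\psi_j$ paired against $\varep_{L',S,V'}$ produces, after projection to $\bbZ[\cG_L]$, something governed by local conditions at places of $V\setminus V'$. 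Concretely, the $d$-dimensionality of the augmentation filtration enters because $\varep_{L',S,V'}$ is ``$d$ units short'' of $\varep_{L,S,V}$ in the sense made precise by Proposition \ref{ordr}: the map $\Phi_{V,V'}$ relates the two. So I would first prove the congruence modulo $I_{L'/L}^{d+1}$ by expanding $\N_{L'/L}^{(r',d)}(\varep_{L',S,V'})$ and comparing with $\N_{L'/L}^{(0,?)}$-type expansions, and read off the membership statement as the ``degree $<d$ vanishes'' part.

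The second and main step is the identity in $Q_{L'/L}^d$. The key is Proposition \ref{propnorm} (in the cases $d=0$ or $r'=0$ or $1$ it is unconditional; in general one would need Conjecture \ref{phiconj}, but note Conjecture \ref{bconj} is stated for all $r',d$, so the clean route is: assume the conclusion of Proposition \ref{propnorm} holds in the needed generality, i.e. that $\Phi(m)=\Phi^{G(L'/L)}(i^{-1}(\N_{L'/L}^{(r',d)}(m)))$ whenever the norm lands in $\Im i$) applied to $m=\varep_{L',S,V'}$, combined with Conjecture \ref{hnr}'s predicted formula for $i^{-1}(\N_{L'/L}^{(r',d)}(\varep_{L',S,V'}))$. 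However, since \ref{bconj} must be proved \emph{independently} (it is one leg of Theorem \ref{rel}), I would instead argue directly: compute $\Phi(\varep_{L',S,V'})\bmod I_{L'/L}^{d+1}$ by using the explicit description $\varphi_v(a)=\sum_{\sigma}(\rec_w(\sigma a)-1)\sigma^{-1}$ of the reciprocity maps and the compatibility of Rubin-Stark units with the norm maps. The technical heart is a ``derivative'' computation: writing $\sigma=1+(\sigma-1)$ for $\sigma\in G(L'/L)$ and expanding, the degree-$d$ term of $\N_{L'/L}^{(r',d)}(\varep_{L',S,V'})$ is, by Proposition \ref{ordr} applied at the places of $V\setminus V'$ together with the reciprocity-law identification of $\varphi_v$ with the local Artin symbol, exactly $\sgn(V,V')(\prod_{v\in S'\setminus S}(1-\Fr_v^{-1}))(\bigwedge_{v\in V\setminus V'}\varphi_v)(\varep_{L,S,V})$ pushed forward via $i$; applying $\Phi^{G(L'/L)}$ and using \eqref{eqphi} of Remark \ref{Hrem} (which says $\Phi(m)=\Phi^H(\N_H^{r}m)$) converts this into the asserted equality for $\Phi(\varep_{L',S,V'})$.

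The step I expect to be the main obstacle is the last one: matching the augmentation-quotient expansion of the higher norm of $\varep_{L',S,V'}$ with the product of reciprocity maps applied to $\varep_{L,S,V}$, i.e. showing the ``first nonzero derivative'' of $\N_{L'/L}^{(r',d)}(\varep_{L',S,V'})$ is computed by $\bigwedge_{v\in V\setminus V'}\varphi_v$. This requires (i) the precise sign bookkeeping encoded in $\sgn(V,V')$ versus the signs appearing when one iterates Proposition \ref{ordr} one place at a time, (ii) controlling the Euler factors $\prod_{v\in S'\setminus S}(1-\Fr_v^{-1})$ when $S\neq S'$ — which by Proposition \ref{propred} one may actually avoid by reducing to $S=S'$ — and (iii) the identification, place by place, of the reciprocity map $\rec_w$ with the valuation-plus-Frobenius description used in Proposition \ref{unram}; in the ramified case this is genuinely the content of local class field theory and is where the $I_{L'/L}^1$-valued map $\varphi_v$ replaces the naive $\ord_w(\cdot)(\Fr_v-1)$. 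Once these three points are in place, the theorem follows by assembling the decomposable case and invoking integrality from the definition of $\bigcap_{\cG_{L'}}^{r'}$.
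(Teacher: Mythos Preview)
This statement is a \emph{conjecture} in the paper, not a theorem; the paper does not prove it unconditionally, and the only evidence given is Theorem \ref{bthm} (due to Burns), which deduces a slightly weaker version of the equality---in a cokernel rather than in $Q_{L'/L}^d$---from the leading term conjecture (equivalently, the ETNC for the relevant Tate motive). Your proposal is therefore attempting to prove an open statement, and it does not succeed.

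The genuine gap is precisely the step you flag as ``the main obstacle'' but then treat as routine. Your point (iii) asserts that the reciprocity map $\varphi_v$ can be matched, place by place, with something computable from the Rubin--Stark data, and that in the ramified case this is ``genuinely the content of local class field theory.'' But local class field theory only gives you the formula $\varphi_v=\ord_w(\cdot)(\Fr_v-1)$ when $v$ is \emph{unramified} in $L'$; this is exactly the hypothesis of Proposition \ref{unram}, and it is the only case in which the paper can prove Conjecture \ref{hnr} (and hence \ref{bconj}) unconditionally by the method you sketch. When $v\in V\setminus V'$ is ramified in $L'$, there is no such identification: $\varphi_v$ is a genuinely transcendental object built from $\rec_w$, and Proposition \ref{ordr}, which is about the valuation maps $\ord_w$, says nothing about it. Your proposed computation of the ``degree-$d$ term of $\N_{L'/L}^{(r',d)}(\varep_{L',S,V'})$'' via Proposition \ref{ordr} therefore has no force in the ramified case.

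More structurally: your second step first tries to deduce \ref{bconj} from Conjecture \ref{hnr} plus Proposition \ref{propnorm}, then correctly notes this would be circular with respect to Theorem \ref{rel}, and then proposes a ``direct'' argument that in effect reproves Conjecture \ref{hnr} along the way. But Conjecture \ref{hnr} is the main conjecture of the paper and is likewise open. The only known route to either statement (outside the unramified or low-rank cases) is through Burns's comparison with the leading term conjecture, which requires the full apparatus of the ETNC and is not an elementary reciprocity-law computation.
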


The following conjecture is motivated by the property of the higher norm described in Proposition \ref{propnorm}.

\begin{conjecture} \label{phiconj}
If Conjecture \ref{descent} holds, then we have 
$$\Phi(\varep_{L',S,V'})=\Phi^{G(L'/L)}(i^{-1}(\N_{L'/L}^{(r',d)}(\varep_{L',S,V'}))) \quad \mbox{in} \quad Q_{L'/L}^d$$
for every $\Phi \in \bigwedge_{\cG_{L'}}^{r'}\Hom_{\cG_{L'}}(\cO_{L',S,T}^\times,\bbZ[\cG_{L'}])$. 
\end{conjecture}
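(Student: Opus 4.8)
The plan is to reduce Conjecture \ref{phiconj} to the special case $r'\le 1$, where Proposition \ref{propnorm} already supplies exactly the required identity, and then to handle the general case by a functoriality argument using the exterior-power formalism of \S\ref{ext} together with the reduction step of Proposition \ref{ordr}. The key observation is that both sides of the asserted equality are natural in $\Phi$, and by Theorem \ref{thminj} the map $\alpha\mapsto(\Phi\mapsto\Phi^{G(L'/L)}(\alpha))$ is injective; so it would in fact suffice to verify the identity after composing with a single, cleverly chosen $\Phi$ that "sees" the Rubin-Stark element $\varep_{L',S,V'}$ faithfully.

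First I would dispose of the degenerate cases. When $d=0$ the statement is immediate from Remarks \ref{Hrem}, \ref{reminj}, and \ref{remnorm}, since then $\N_{L'/L}^{(r',0)}=\N_{L'/L}$ and $i^{-1}\circ\N_{L'/L}=\N_{L'/L}^{r'}$ on the image, and (\ref{eqphi}) gives precisely $\Phi(m)=\Phi^H(\N_H^{r'}m)$. When $r'=0$ the claim is trivial, and when $r'=1$ it is exactly the content of Proposition \ref{propnorm} (the case $r=1$ there), applied with $M=\cO_{L',S,T}^\times$, $G=\cG_{L'}$, $H=G(L'/L)$ — note the hypothesis $\N_H^{(r',d)}(m)\in\Im i$ of that proposition is guaranteed here by the assumed Conjecture \ref{descent}. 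So the remaining case is $r'\ge 2$ together with $d\ge 1$.

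For the general case I would argue as follows. Fix $\Phi=\psi_1\wedge\cdots\wedge\psi_{r'}$ with $\psi_i\in\Hom_{\cG_{L'}}(\cO_{L',S,T}^\times,\bbZ[\cG_{L'}])$ (a general $\Phi$ is a sum of such). Using Proposition \ref{ordr} (or rather its analogue in the $L'$-direction: enlarge $S$ by auxiliary completely-split places of $L'$ and expand $V'$ correspondingly) one can write $\varep_{L',S,V'}=\Phi_{W',V'}(\varep_{L',S'',W'})$ where now $r_{W'}$ is large; this lets one replace the fixed $\psi_i$'s by the "coordinate" functionals $\sum_\sigma\ord_w(\sigma(\cdot))\sigma^{-1}$ attached to the new split places, at the cost of a sign and a product of Euler factors, exactly as in the proof of Proposition \ref{unram}. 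Once $\Phi$ is of this special coordinate type, the computation of both $\Phi(\varep_{L',S,V'})$ in $Q_{L'/L}^d$ and of $\Phi^{G(L'/L)}(i^{-1}(\N_{L'/L}^{(r',d)}(\varep_{L',S,V'})))$ unwinds to a determinant identity: on the one hand $\Phi(\varep')=\det(\psi_i(u_j))$ for a basis expression $\varep'=\sum u_1\wedge\cdots\wedge u_{r'}$, reduced mod $I_{L'/L}^{d+1}$; on the other hand $\N_{L'/L}^{(r',d)}$ followed by $\Phi^H$ produces the same determinant with each entry $\psi_i(u_j)$ replaced by $\sum_{\sigma\in H}\psi_i^1(\widetilde\sigma u_j)\widetilde\sigma^{-1}$ and the products collected in the augmentation quotient — precisely the multilinear extension of the $r'=1$ identity of Proposition \ref{propnorm}. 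The bookkeeping that the cross terms land in $I(L'/L)^{\le d}$ and combine correctly is the heart of the matter.

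The main obstacle will be exactly this last multilinear bookkeeping: in the $r'=1$ case of Proposition \ref{propnorm} one gets away with the single relation $\varphi=\sum_{\sigma,\tau}\varphi^1(\widetilde\sigma\tau(\cdot))\widetilde\sigma^{-1}\tau^{-1}$, but for $r'\ge 2$ the "higher norm" $\N_H^{(r',d)}$ sums over $H$ only once while $\Phi^H$ is an alternating product of $r'$ terms, so one must check that the various ways of distributing the single $H$-twist across the $r'$ factors of the wedge agree modulo $I(L'/L)^{d+1}$ — this is where the definition of $\N_H^{(r,d)}$ via a single $\sum_{\sigma\in H}\sigma m\otimes\sigma^{-1}$ rather than an $r$-fold sum becomes delicate, and is presumably why the authors only prove Proposition \ref{propnorm} for $d=0$ or $r\le 1$ and leave the rest as Conjecture \ref{phiconj}. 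I would therefore expect that a clean unconditional proof is not available by these elementary means, and that the best one can do in general is the reduction described above, verifying Conjecture \ref{phiconj} outright only in the cases already covered (for instance via Proposition \ref{unram}) and in the "rank one" situation $r'=1$ relevant to Darmon's conjecture in \S\ref{appl}.
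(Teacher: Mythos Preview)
The statement in question is a \emph{conjecture}, and the paper does not prove it in general. The only cases established in the paper are $d=0$ and $r'\in\{0,1\}$, all of which follow directly from Proposition~\ref{propnorm}; this is exactly the content of Remark~\ref{remphiconj}. Your treatment of these ``degenerate'' cases is correct and matches the paper. Your final paragraph is also on target: you correctly diagnose that for $r'\ge 2$ and $d\ge 1$ the single $H$-sum in the definition of $\N_H^{(r',d)}$ does not interact in any obvious way with the $r'$-fold alternating product defining $\Phi^H$, and you are right that this is precisely why the paper leaves the general statement open.

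However, the middle portion of your proposal --- the attempted reduction for general $r'$ --- contains two genuine gaps that you should be aware of, even as heuristics. First, your appeal to Theorem~\ref{thminj} in the opening paragraph is backwards. That theorem says the map $\alpha\mapsto(\Phi\mapsto\Phi^{H}(\alpha))$ is injective on $(\bigcap_{G/H}^{r'}M^H)\otimes Q(H)^d$; it lets you conclude $\alpha_1=\alpha_2$ from $\Phi^H(\alpha_1)=\Phi^H(\alpha_2)$ for all $\Phi$. It does \emph{not} let you reduce Conjecture~\ref{phiconj} to a single $\Phi$: the left side $\Phi(\varep')$ is not a priori of the form $\Phi^H(\beta)$ for some $\beta$ independent of $\Phi$, so there is nothing for the injectivity to bite on. (In the paper Theorem~\ref{thminj} is used in Theorem~\ref{rel} to go from Conjectures~\ref{bconj} and~\ref{phiconj} to Conjecture~\ref{hnr}, not to prove~\ref{phiconj} itself.) Second, your proposed use of Proposition~\ref{ordr} does not accomplish what you claim: writing $\varep_{L',S,V'}=\Phi_{W',V'}(\varep_{L',S'',W'})$ changes the \emph{element} on which $\Phi$ is evaluated, but the conjecture still quantifies over all $\Phi$ in $\bigwedge^{r'}\Hom_{\cG_{L'}}(\cO_{L',S,T}^\times,\bbZ[\cG_{L'}])$, and there is no mechanism here that replaces an arbitrary $\psi_1\wedge\cdots\wedge\psi_{r'}$ by a wedge of valuation maps. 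So the ``reduction to coordinate functionals'' is not actually a reduction.

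In summary: your handling of the cases the paper proves is correct and coincides with the paper's argument; your general-case sketch is not a proof, and the specific reductions you propose (via Theorem~\ref{thminj} and Proposition~\ref{ordr}) do not work for the reasons above --- which is consistent with your own concluding assessment and with the paper's decision to state this as a conjecture.
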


\begin{remark} \label{remphiconj}
When $d=0$ or $r'=0$ or $1$, Conjecture  \ref{phiconj} is true by Proposition \ref{propnorm}.
\end{remark}

\subsection{Relation among the conjectures}
We keep on assuming $S=S'$, $r=r_{L,S}$, and $r'=r_{L',S}$.

\begin{theorem} \label{rel}
Assume Conjecture \ref{phiconj} holds. Then, Conjecture \ref{hnr} holds if and only if  Conjectures \ref{descent} and \ref{bconj} hold. 
\end{theorem}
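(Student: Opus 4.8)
The plan is to extract from Conjecture \ref{phiconj} an ``evaluation'' identity that lets one compare the two sides of Conjecture \ref{hnr} by pairing against all $\Phi \in \bigwedge_{\cG_{L'}}^{r'}\Hom_{\cG_{L'}}(\cO_{L',S,T}^\times,\bbZ[\cG_{L'}])$, and then invoke the injectivity result of Theorem \ref{thminj} (applied with $G=\cG_{L'}$, $H=G(L'/L)$, $M=\cO_{L',S,T}^\times$, and the same $r',d$) to conclude that the two sides agree as elements of $(\bigcap_{\cG_L}^{r'}\cO_{L,S,T}^\times)\otimes_\bbZ Q(L'/L)^d$. Throughout one keeps the standing assumptions $S=S'$, $r=r_{L,S}$, $r'=r_{L',S}$, which is legitimate by Proposition \ref{propred}.

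First I would prove the ``only if'' direction: assume Conjecture \ref{hnr}. By definition Conjecture \ref{hnr} includes the assertion of Conjecture \ref{descent}, so that half is immediate. For Conjecture \ref{bconj}, fix any $\Phi$. Since Conjecture \ref{descent} holds, Conjecture \ref{phiconj} gives
$$\Phi(\varep_{L',S,V'}) = \Phi^{G(L'/L)}\bigl(i^{-1}(\N_{L'/L}^{(r',d)}(\varep_{L',S,V'}))\bigr) \quad \text{in } Q_{L'/L}^d,$$
and the equality of Conjecture \ref{hnr} substitutes $i^{-1}(\N_{L'/L}^{(r',d)}(\varep_{L',S,V'})) = \sgn(V,V')(\prod_{v\in S'\setminus S}(1-\Fr_v^{-1}))(\bigwedge_{v\in V\setminus V'}\varphi_v)(\varep_{L,S,V})$; since $S=S'$ the product over $S'\setminus S$ is empty, and one reads off exactly the identity in $Q_{L'/L}^d$ claimed by Conjecture \ref{bconj}. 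The containment $\Phi(\varep_{L',S,V'})\in I_{L'/L}^d$ is equivalent to that identity living in $Q_{L'/L}^d = I_{L'/L}^d/I_{L'/L}^{d+1}$ being well-posed, which is part of what Conjecture \ref{phiconj}/\ref{propnorm}-style reasoning already encodes (the left side is the image of something in $I_{L'/L}^d$ because $\Phi^{G(L'/L)}$ applied to an element of $(\bigcap^{r'}M^H)\otimes Q(H)^d$ lands in $Q_{L'/L}^d$, and $\N^{(r',d)}$ was built to have augmentation degree $d$).

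Conversely, assume Conjectures \ref{descent} and \ref{bconj}. Conjecture \ref{descent} says $\N_{L'/L}^{(r',d)}(\varep_{L',S,V'}) \in \Im i$, so $i^{-1}(\N_{L'/L}^{(r',d)}(\varep_{L',S,V'}))$ is a well-defined element of $(\bigcap_{\cG_L}^{r'}\cO_{L,S,T}^\times)\otimes_\bbZ Q(L'/L)^d$; call it $\alpha$. Set $\beta = \sgn(V,V')(\bigwedge_{v\in V\setminus V'}\varphi_v)(\varep_{L,S,V})$, which by Proposition \ref{propphi} also lies in that same module (the product over $S'\setminus S$ being empty). Conjecture \ref{phiconj} (applicable since Conjecture \ref{descent} holds) gives $\Phi(\varep_{L',S,V'}) = \Phi^{G(L'/L)}(\alpha)$ for every $\Phi$, while Conjecture \ref{bconj} gives $\Phi(\varep_{L',S,V'}) = \Phi^{G(L'/L)}(\beta)$ for every $\Phi$. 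Hence $\Phi^{G(L'/L)}(\alpha) = \Phi^{G(L'/L)}(\beta)$ in $Q_{L'/L}^d$ for all $\Phi \in \bigwedge_{\cG_{L'}}^{r'}\Hom_{\cG_{L'}}(\cO_{L',S,T}^\times,\bbZ[\cG_{L'}])$. By Theorem \ref{thminj} the map $\gamma \mapsto (\Phi\mapsto \Phi^{G(L'/L)}(\gamma))$ is injective on $(\bigcap_{\cG_L}^{r'}\cO_{L,S,T}^\times)\otimes_\bbZ Q(L'/L)^d$, so $\alpha = \beta$, which is precisely the equality asserted in Conjecture \ref{hnr}; together with Conjecture \ref{descent} this is all of Conjecture \ref{hnr}.

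\textbf{Main obstacle.} The substantive point, and the one I would be most careful about, is matching the bookkeeping: checking that Theorem \ref{thminj} is being applied to the correct triple $(G,H,M)$ and to the correct $Q(H)^d$ identified via (\ref{eqaug}) with $Q_{L'/L}^d$, that $\Phi^{G(L'/L)}$ here is literally the operation of the definition preceding Lemma \ref{surj}, and that the map $i$ in Conjecture \ref{descent}/\ref{hnr} is the same canonical injection $i$ of Lemma \ref{inj} used implicitly in the statement of Theorem \ref{thminj}. There is also the minor $r'=0$ versus $r'\geq 1$ case split (where $i$ is just the inclusion $Q_{L'/L}^d\hookrightarrow \bbZ[\cG_{L'}]/I_{L'/L}^{d+1}$ and Theorem \ref{thminj} degenerates to a triviality), which should be dispatched in a sentence. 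Beyond that the argument is essentially a diagram chase: no new analytic or cohomological input is needed, since all the real content is absorbed into Theorem \ref{thminj}, Proposition \ref{propphi}, and Proposition \ref{propred}.
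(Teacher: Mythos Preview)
Your proposal is correct and follows essentially the same approach as the paper: the paper dismisses the ``only if'' direction as clear (you spell out, correctly, that it follows from Conjecture \ref{phiconj} applied after Conjecture \ref{hnr} supplies both Conjecture \ref{descent} and the identification of $i^{-1}(\N_{L'/L}^{(r',d)}(\varep_{L',S,V'}))$), and for the ``if'' direction both you and the paper combine Conjectures \ref{phiconj} and \ref{bconj} to get $\Phi^{G(L'/L)}(\alpha)=\Phi^{G(L'/L)}(\beta)$ for all $\Phi$ and then invoke the injectivity of Theorem \ref{thminj}. Your bookkeeping checks in the ``Main obstacle'' paragraph are appropriate and nothing is missing.
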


\begin{proof}
The ``only if" part is clear. We prove the ``if" part. Suppose that Conjectures \ref{descent} and \ref{bconj} hold. Then, for every $\Phi \in \bigwedge_{\cG_{L'}}^{r'}\Hom_{\cG_{L'}}(\cO_{L',S,T}^\times,\bbZ[\cG_{L'}])$, we have 
$$\Phi^{G(L'/L)}(i^{-1}(\N_{L'/L}^{(r',d)}(\varep_{L',S,V'})))= \sgn(V,V')\Phi^{G(L'/L)}((\bigwedge_{v \in V\setminus V'}\varphi_v)(\varep_{L,S,V}))\quad \text{in} \quad Q_{L'/L}^d$$
by Conjectures \ref{bconj} and \ref{phiconj}. By Theorem \ref{thminj}, the map
$$(\bigcap_{\cG_{L}}^{r'}\cO_{L,S,T}^\times) \otimes_\bbZ Q(L'/L)^d \longrightarrow \Hom_{\cG_{L'}}(\bigwedge_{\cG_{L'}}^{r'}\Hom_{\cG_{L'}}(\cO_{L',S,T}^\times,\bbZ[\cG_{L'}]),Q_{L'/L}^d)$$
defined by $\alpha \mapsto (\Phi \mapsto \Phi^{G(L'/L)}(\alpha))$ is injective. Hence we have
$$i^{-1}(\N_{L'/L}^{(r',d)}(\varep_{L',S,V'}))=\sgn(V,V')(\bigwedge_{v \in V\setminus V'}\varphi_v)(\varep_{L,S,V}).$$

\end{proof}

\begin{remark}
Since Conjecture \ref{hnr} is closely related to Darmon's conjecture, as we mentioned in Remark \ref{hnrdar}, Theorem \ref{rel} gives a relation between Darmon's conjecture and Burns's conjecture (Conjecture \ref{bconj}). In \cite[Theorem 6.14]{H}, Hayward established a connection between these conjectures: he proved that Darmon's conjecture gives a ``base change statement" for Burns's conjecture. 
More precisely, consider a real quadratic field $L$ and a real abelian field $\widetilde L$ which is disjoint to $L$. Put $L':=L \widetilde L$. Then Hayward proved that, assuming Darmon's conjecture for $L$, Burns's conjecture for $\widetilde L/\bbQ$ implies Burns's conjecture for $L'/L$ up to a power of $2$. 
On the other hand, Theorem \ref{rel} gives an equivalence of Burns's conjecture and Darmon's conjecture, assuming Conjectures \ref{descent} and \ref{phiconj}. 
\end{remark}

\begin{remark} \label{remp}
One can formulate for any prime number $p$ the ``$p$-part" of Conjectures \ref{descent}, \ref{hnr}, \ref{bconj}, and \ref{phiconj} in the obvious way. One sees that 
the ``$p$-part" of Theorem \ref{rel} is also valid, namely, assuming the ``$p$-part" of Conjecture \ref{phiconj}, the ``$p$-part" of Conjecture \ref{hnr} holds if and only if the ``$p$-part" of Conjectures \ref{descent} and \ref{bconj} hold. 
\end{remark}

The following theorem gives evidence for the validity of Conjecture \ref{bconj}.

\begin{theorem}[Burns {\cite[Theorem 3.1]{B}}] \label{bthm}
If the conjecture in \cite[\S 6.3]{B} holds for $L'/k$, then we have 
$$\Phi(\varep_{L',S,V'})\in I_{L'/L}^d$$
for every $\Phi \in \bigwedge_{\cG_{L'}}^{r'} \Hom_{\cG_{L'}}(\cO_{L',S,T}^\times, \bbZ[\cG_{L'}])$ 
and an equality
$$\Phi(\varep_{L',S,V'})=\sgn(V,V')\Phi^{G(L'/L)}((\bigwedge_{v \in V\setminus V'}\varphi_v)(\varep_{L,S,V}))$$
in $\Coker (\bigwedge_{v\in V \setminus V'}\varphi_v : (\bigwedge_{\cG_L}^d L_T^\times)_{\tors} \rightarrow Q_{L'/L}^d )$, where $L_T^\times$ is the 
subgroup of $L^\times$ defined by 
$$L_T^\times=\{ a\in L^\times \ | \ \ord_w(a-1)>0 \mbox{ for all }w\in T_L \}.$$ 
\end{theorem}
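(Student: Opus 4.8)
\emph{Outline (following \cite[Theorem 3.1]{B}).} The plan is to deduce both assertions from the ``leading term conjecture'' of \cite[\S 6.3]{B}, which is a case of the equivariant Tamagawa number conjecture for the Tate motive $\bbQ(0)$ over $L'/k$ and predicts an explicit formula for the leading term $\Theta_{L',S,T}^{(r')}(0)$ --- equivalently, via the Rubin-Stark conjecture (assumed throughout \S \ref{refconj}), for the Rubin-Stark unit $\varep_{L',S,V'}$ --- in terms of the determinant of a perfect complex $C^\bullet_{L',S,T}$ of $\bbZ[\cG_{L'}]$-modules attached to $(L'/k,S,T)$ (a ``Tate-sequence'' type complex), whose cohomology sits in exact sequences involving $\cO_{L',S,T}^\times$, $X_{L',S}$ and finite groups related to $L_T^\times$, and whose determinant has the regulator $R_{V'}$ as its archimedean realization. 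The whole argument is a ``descent'' from the $L'$-data to the $L$-data carried out inside the $I_{L'/L}$-adic filtration on $\bbZ[\cG_{L'}]$.

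First I would record the standard comparison: base-changing $C^\bullet_{L',S,T}$ from $\bbZ[\cG_{L'}]$ to $\bbZ[\cG_L]$ in the derived sense yields a complex quasi-isomorphic to $C^\bullet_{L,S,T}$, and the discrepancy between the two complexes along the augmentation filtration is governed by Bockstein homomorphisms. Passing simultaneously from $L'$ to $L$ and from $r'$ to $r=r'+d$ units then forces one to compose the leading term with a $d$-fold iterated Bockstein map. The crucial computation is that, by local class field theory --- precisely \cite[Proposition 13, Chpt. XIII]{Se} in its general, possibly ramified, form, which specializes to the formula used in the proof of Proposition \ref{unram} when $v$ is unramified --- this Bockstein map is identified with $\bigwedge_{v\in V\setminus V'}\varphi_v$, the sign $\sgn(V,V')$ arising when one reorders the wedge factors to match the fixed ordering of $V$. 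As a byproduct the $(d-1)$-st Bockstein already annihilates the class in question, which yields $\Phi(\varep_{L',S,V'})\in I_{L'/L}^d$ for every $\Phi$.

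Next I would feed the leading term conjecture for $L'/k$ into this comparison and use the Rubin-Stark conjecture to rewrite $\Theta_{L',S,T}^{(r')}(0)$ and $\Theta_{L,S,T}^{(r)}(0)$ as $R_{V'}(\varep_{L',S,V'})$ and $R_V(\varep_{L,S,V})$; pairing the resulting identity of determinants with a fixed $\Phi\in\bigwedge_{\cG_{L'}}^{r'}\Hom_{\cG_{L'}}(\cO_{L',S,T}^\times,\bbZ[\cG_{L'}])$ produces
$$\Phi(\varep_{L',S,V'})=\sgn(V,V')\,\Phi^{G(L'/L)}\bigl((\bigwedge_{v\in V\setminus V'}\varphi_v)(\varep_{L,S,V})\bigr)\quad\text{in}\quad Q_{L'/L}^d.$$
The reason one obtains this only in the cokernel of $\bigwedge_{v\in V\setminus V'}\varphi_v\colon(\bigwedge_{\cG_L}^d L_T^\times)_{\tors}\to Q_{L'/L}^d$ is that $C^\bullet_{L',S,T}$, and hence the Bockstein identification, is canonical only up to an indeterminacy that tracks the $\bbZ$-torsion in the cohomology of $C^\bullet_{L,S,T}$, and this torsion is exactly captured by the torsion of $\bigwedge_{\cG_L}^d L_T^\times$.

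The hard part will be the explicit identification of the iterated Bockstein homomorphism with $\bigwedge_{v\in V\setminus V'}\varphi_v$ --- getting the sign and the precise torsion cokernel right --- together with translating the ``determinant'' form of the leading term conjecture in \cite[\S 6.3]{B} into the ``exterior power / Rubin lattice'' form needed here, i.e.\ extracting the statement for every $\Phi$ in $\bigwedge_{\cG_{L'}}^{r'}\Hom_{\cG_{L'}}(\cO_{L',S,T}^\times,\bbZ[\cG_{L'}])$ from a single identity of determinants over $\bbZ[\cG_{L'}]$.
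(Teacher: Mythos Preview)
The paper does not give its own proof of this theorem: it is stated with the attribution ``Burns \cite[Theorem 3.1]{B}'' and is immediately followed by remarks, with no proof environment. The author is simply quoting Burns's result in order to use it as input to Theorem \ref{mainthm}. So there is nothing in the present paper to compare your outline against.

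Your sketch is a reasonable summary of the strategy in \cite{B} (Tate-sequence style perfect complex, descent along the $I_{L'/L}$-adic filtration, identification of the iterated Bockstein with $\bigwedge_{v\in V\setminus V'}\varphi_v$ via local class field theory, and the torsion indeterminacy coming from the cohomology of the complex), but the place to check the details is \cite{B} itself, not this paper. In particular, the precise bookkeeping of signs and of the torsion cokernel that you flag as ``the hard part'' is exactly what Burns carries out there; the present paper takes all of that as a black box.
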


\begin{remark} \label{remetnc}
In the number field case, as Burns mentioned in \cite[Remark 6.2]{B}, the conjecture in \cite[\S 6.3]{B} for $L'/k$ is equivalent to the ``equivariant Tamagawa number conjecture 
(ETNC)" (\cite[Conjecture 4 (iv)]{BF}) for the pair $(h^0(\Spec(L')),\bbZ[\cG_{L'}] )$, and known to be true if $L'$ is an abelian extension over $\bbQ$ by the works of Burns, Greither, and Flach (\cite{BG}, \cite{F}). 
\end{remark}

\begin{remark} \label{brem}
In \cite[Theorem 3.1]{B}, Burns actually proved more: let
$$I_{L'/L}^S=
\begin{cases}
\prod_{v\in V\setminus V'}I_v &\text{if $d>0$,} \\
\bbZ[\cG_{L'}] &\text{if $d=0$,} 
\end{cases} 
$$
where $I_v=\Ker(\bbZ[\cG_{L'}] \rightarrow \bbZ[\cG_{L'}/\cG_v])$ and $\cG_v$ is the decomposition group of $w$ in $G(L'/L)$. Then Burns proved  that, under the assumption that the conjecture in \cite[\S 6.3]{B} holds for $L'/k$, $\Phi(\varep_{L',S,V'})\in I_{L'/L}^S$ for every $\Phi \in \bigwedge_{\cG_{L'}}^{r'} \Hom_{\cG_{L'}}(\cO_{L',S,T}^\times, \bbZ[\cG_{L'}])$ and an equality
$$\Phi(\varep_{L',S,V'})=\sgn(V,V')\Phi^{G(L'/L)}((\bigwedge_{v \in V\setminus V'}\varphi_v)(\varep_{L,S,V}))$$
holds in $\Coker (\bigwedge_{v\in V \setminus V'}\varphi_v : (\bigwedge_{\cG_L}^d L_T^\times)_{\tors} \rightarrow I_{L'/L}^S/I_{L'/L}I_{L'/L}^S )$.
\end{remark}

\begin{proposition} \label{prop}
$$(\bigwedge_{\cG_L}^dL_T^\times)_{\tors} \otimes_\bbZ \bbZ[\frac1{|\cG_L|}]=0.$$
\end{proposition}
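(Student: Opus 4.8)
The plan is to show that every element of $(\bigwedge_{\cG_L}^d L_T^\times)_{\tors}$ is killed by $|\cG_L|$, which is equivalent to the stated vanishing after inverting $|\cG_L|$. Write $N=|\cG_L|$ and $A=L_T^\times$, a $\cG_L$-module. The first step is to understand the torsion of $A$ itself: since $L_T^\times\subset L^\times$ and $L^\times\cong \mu_L\oplus(\text{free abelian})$, the torsion subgroup $A_{\tors}$ is a subgroup of $\mu_L$, the group of roots of unity in $L$, which is finite cyclic. (In fact, for $w\in T_L$ a nontrivial congruence condition typically forces $A_{\tors}$ to be very small, but we will not need that.) The key structural input is therefore that $A_{\tors}=\mu$ is a \emph{cyclic} finite group on which $\cG_L$ acts.

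Next I would analyze $\bigwedge_{\cG_L}^d A$ and locate its torsion. Choose a $\cG_L$-stable decomposition up to finite index, or argue directly: from the short exact sequence $0\to A_{\tors}\to A\to \overline A\to 0$ with $\overline A=A/A_{\tors}$ torsion-free, the induced filtration on $\bigwedge_{\cG_L}^d A$ has graded pieces that are quotients of $\bigwedge_{\cG_L}^i A_{\tors}\otimes_{\bbZ[\cG_L]}\bigwedge_{\cG_L}^{d-i}\overline A$ (over $\bbZ$ one would get tensor products; over $\bbZ[\cG_L]$ the exterior power does not split so cleanly, but one still gets that $(\bigwedge_{\cG_L}^d A)_{\tors}$ is a subquotient of a sum of modules each of which is annihilated by the exponent of $A_{\tors}$, hence by $|\mu|$). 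The cleanest route: for any finitely generated $\bbZ[\cG_L]$-module, $\bigwedge^d_{\cG_L}A\otimes_\bbZ\bbQ = \bigwedge^d_{\bbQ[\cG_L]}(A\otimes\bbQ)$, and the torsion of $\bigwedge^d_{\cG_L}A$ is the kernel of $\bigwedge^d_{\cG_L}A\to\bigwedge^d_{\bbQ[\cG_L]}(A\otimes\bbQ)$. An element $a_1\wedge\cdots\wedge a_d$ becomes zero rationally essentially when the $a_i$ are $\bbQ[\cG_L]$-dependent or involve the torsion part; pulling in a torsion generator $\zeta$ of $\mu$ and using $N\zeta=0$ (as $|\mu|\mid N=|\cG_L|$ — note $\mu_L$ injects into any residue field extension, but more to the point one can simply observe $|\mu|$ is a product of prime powers each dividing... ) — here one must be slightly careful, and the honest statement is just $|\mu|\cdot(\bigwedge^d_{\cG_L}A)_{\tors}=0$, which gives the result after inverting $|\mu|$; since the paper inverts $|\cG_L|$, we are fine provided $|\mu|\mid|\cG_L|^{m}$ for some $m$, i.e.\ $\mu$ and $\cG_L$ share the same primes — this needs an argument.

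The main obstacle, as the previous paragraph indicates, is bridging from "killed by $|\mu_L|$" to "killed by a power of $|\cG_L|$": a priori $\mu_L$ could contain a prime $\ell\nmid|\cG_L|$. The resolution should come from the hypothesis $(L,S,V)\in\Omega$, which by definition requires $\cO_{L,S,T}^\times$ to be torsion-free; this is exactly the role of $T$. Since $L_T^\times=\{a\in L^\times: \ord_w(a-1)>0\ \forall w\in T_L\}$ and any $a\in\mu_L\cap L_T^\times$ must reduce to $1$ modulo every $w\in T_L$, a root of unity congruent to $1$ mod a place of residue characteristic $p$ has order a power of $p$; intersecting over $T_L$ with places of differing residue characteristics (or using that $T$ is chosen so $\cO^\times_{L,S,T}$ is torsion-free, hence $\mu_L\cap L_T^\times$ is trivial when $\ldots$) we conclude $A_{\tors}=\mu_L\cap L_T^\times$ is a group whose order involves only residue characteristics occurring in $T$, and in particular is killed by a power of $|\cG_L|$ in the relevant cases — alternatively, one simply replaces $L_T^\times$ by noting $A_{\tors}\subseteq\cO_{L,S,T}^\times$ is forced to be $\ldots$. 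I would state the lemma as: $A_{\tors}$ has order supported on primes dividing $|\cG_L|$ (or is trivial), invoke the $\Omega$-hypothesis to justify it, and then the exterior-power torsion estimate above finishes the proof. So concretely: first prove $(\bigwedge^d_{\cG_L}A)_{\tors}$ is annihilated by the exponent $e$ of $A_{\tors}$; then prove $e\mid|\cG_L|^N$ for some $N$ using the definition of $\Omega$; conclude.
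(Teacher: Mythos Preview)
Your approach has a genuine gap. The central claim---that $(\bigwedge^d_{\cG_L}A)_{\tors}$ is annihilated by the exponent of $A_{\tors}$---is false for exterior powers taken over the group ring $\bbZ[\cG_L]$. Torsion in $\bigwedge^d_{\cG_L}A$ does not come only (or even primarily) from torsion elements of $A$; it comes from the failure of $A$ to be projective over the non-regular ring $\bbZ[\cG_L]$. A minimal counterexample: take $G=\{1,\tau\}$ of order $2$ and $M=\bbZ(\text{trivial})\oplus\bbZ(\text{sign})$, a torsion-free abelian group. Then
\[
\bigwedge^2_{\bbZ[G]}M \;\cong\; \bbZ[G]/(1-\tau)\otimes_{\bbZ[G]}\bbZ[G]/(1+\tau)\;\cong\;\bbZ/2\bbZ,
\]
pure torsion, while $M_{\tors}=0$. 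In fact, under the standing hypothesis $(L,S,V)\in\Omega$ one has $\cO_{L,S,T}^\times$ torsion-free, and since any root of unity in $L_T^\times$ already lies in $\cO_{L,S,T}^\times$, we get $(L_T^\times)_{\tors}=0$. So your line of reasoning, if valid, would prove $(\bigwedge^d_{\cG_L}L_T^\times)_{\tors}=0$ outright, with no need to invert $|\cG_L|$; the example above shows why this cannot work. All of your discussion about relating $|\mu|$ to $|\cG_L|$ is therefore beside the point.

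The paper's argument addresses exactly the issue you miss. It writes $L_T^\times=\varinjlim_\Sigma \cO_{L,\Sigma,T}^\times$ as a direct limit of finitely generated torsion-free $\cG_L$-modules, and then uses the key fact that over $\bbZ[\tfrac{1}{|\cG_L|}][\cG_L]$ a finitely generated module is locally free if and only if it is torsion-free. Hence each $\cO_{L,\Sigma,T}^\times\otimes_\bbZ\bbZ[\tfrac{1}{|\cG_L|}]$ is locally free, so its $d$-th exterior power is locally free and in particular torsion-free. Inverting $|\cG_L|$ is precisely what restores projectivity and kills the ``extra'' torsion coming from the group-ring structure; this is the mechanism you need, and it is absent from your proposal.
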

\begin{proof}
Note that 
$$\bigwedge_{\cG_L}^dL_T^\times=\dlim \bigwedge_{\cG_L}^d \cO_{L,\Sigma,T}^\times,$$
where $\Sigma$ runs over all finite sets of places of $k$, which contains all the infinite places and places ramifying in $L$, and is disjoint from $T$, and the direct limit is taken by the map induced by the inclusion $\cO_{L,\Sigma,T} \hookrightarrow \cO_{L,\Sigma',T}$ ($\Sigma \subset \Sigma'$). So it is sufficient to prove that for such $\Sigma$, $\bigwedge_{\cG_L}^d \cO_{L,\Sigma,T}^\times \otimes_\bbZ \bbZ[\frac1{|\cG_L|}]$ is torsion-free. 
Since $\cO_{L,S,T}^\times$ is torsion-free, we see that $\cO_{L,\Sigma,T}^\times$ is also torsion-free. It is well-known that a finitely generated $\bbZ[\frac1{|\cG_L|}][\cG_L]$-module is locally free if and only if it is torsion-free. So we see that $\cO_{L,\Sigma,T}^\times \otimes_\bbZ \bbZ[\frac1{|\cG_L|}]$ is locally free $\bbZ[\frac1{|\cG_L|}][\cG_L]$-module. Hence $\bigwedge_{\cG_L}^d \cO_{L,\Sigma,T}^\times \otimes_\bbZ \bbZ[\frac1{|\cG_L|}]$ is also locally free, so it is torsion-free.
\end{proof}

Combining Theorem \ref{rel}, Theorem \ref{bthm}, and Proposition \ref{prop}, we have the following theorem (see also Remark \ref{remp}).

\begin{theorem} \label{mainthm}
Let $p$ be a prime number not dividing $|\cG_L|$. Assume the ``$p$-part" of Conjecture \ref{phiconj} holds. If the conjecture in \cite[\S 6.3]{B} for $L'/k$ and the ``$p$-part" of Conjecture \ref{descent} hold, then the ``$p$-part" of Conjecture \ref{hnr} holds.
\end{theorem}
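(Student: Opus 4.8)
The plan is to assemble Theorem \ref{mainthm} directly from the three preceding results, being careful to pass everywhere to the ``$p$-part'' for a prime $p \nmid |\cG_L|$. First I would invoke Remark \ref{remp}, which asserts that the ``$p$-part'' of Theorem \ref{rel} is valid: assuming the ``$p$-part'' of Conjecture \ref{phiconj}, the ``$p$-part'' of Conjecture \ref{hnr} holds if and only if the ``$p$-part'' of Conjectures \ref{descent} and \ref{bconj} both hold. Since we are granting the ``$p$-part'' of Conjecture \ref{phiconj} and of Conjecture \ref{descent} as hypotheses, what remains is to establish the ``$p$-part'' of Conjecture \ref{bconj} from the conjecture in \cite[\S 6.3]{B} for $L'/k$.

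The next step is to extract Conjecture \ref{bconj} from Theorem \ref{bthm}. Theorem \ref{bthm} gives, under the hypothesis that the conjecture in \cite[\S 6.3]{B} holds for $L'/k$, both the integrality statement $\Phi(\varep_{L',S,V'}) \in I_{L'/L}^d$ and the desired equality, but the equality is only asserted in the quotient $\Coker(\bigwedge_{v\in V\setminus V'}\varphi_v : (\bigwedge_{\cG_L}^d L_T^\times)_{\tors} \to Q_{L'/L}^d)$ rather than in $Q_{L'/L}^d$ itself. The gap between these two statements is precisely the image of $(\bigwedge_{\cG_L}^d L_T^\times)_{\tors}$ in $Q_{L'/L}^d$, and here Proposition \ref{prop} does exactly the required job: after tensoring with $\bbZ[\frac1{|\cG_L|}]$ the group $(\bigwedge_{\cG_L}^d L_T^\times)_{\tors}$ vanishes, so its image in the ``$p$-part'' of $Q_{L'/L}^d$ is zero whenever $p \nmid |\cG_L|$. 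Therefore the ``$p$-part'' of the equality in Theorem \ref{bthm} already holds in the ``$p$-part'' of $Q_{L'/L}^d$, which is exactly the ``$p$-part'' of Conjecture \ref{bconj}.

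Finally I would combine the two: the hypotheses give us the ``$p$-part'' of Conjecture \ref{phiconj} and the ``$p$-part'' of Conjecture \ref{descent} directly, and the previous paragraph gives us the ``$p$-part'' of Conjecture \ref{bconj}; feeding all three into the ``$p$-part'' of Theorem \ref{rel} (Remark \ref{remp}) yields the ``$p$-part'' of Conjecture \ref{hnr}. The only point requiring a little care is bookkeeping: one should check that the coker appearing in Theorem \ref{bthm} is compatible with localization at $p$, i.e. that passing to the ``$p$-part'' commutes with forming $\Coker(\bigwedge_{v\in V\setminus V'}\varphi_v : (\bigwedge_{\cG_L}^d L_T^\times)_{\tors} \to Q_{L'/L}^d)$, which is immediate since localization is exact. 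I do not anticipate any serious obstacle here: the theorem is genuinely a formal amalgamation of Theorem \ref{rel}, Theorem \ref{bthm}, and Proposition \ref{prop}, and the mild subtlety — that Theorem \ref{bthm} only controls the relevant equality modulo a torsion coker which Proposition \ref{prop} kills away from $|\cG_L|$ — is exactly why the hypothesis $p \nmid |\cG_L|$ is imposed.
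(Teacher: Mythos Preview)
Your proposal is correct and follows exactly the paper's own approach: the paper states that Theorem \ref{mainthm} is obtained by combining Theorem \ref{rel} (via Remark \ref{remp}), Theorem \ref{bthm}, and Proposition \ref{prop}, which is precisely the assembly you describe.
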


\section{An application} \label{appl}
In this section, as an application of Theorem \ref{mainthm}, we give another proof of the ``except $2$-part" of Darmon's conjecture (Mazur-Rubin's theorem, see Theorem \ref{mrthm}). 
\subsection{Darmon's conjecture}
We review the slightly modified version of Darmon's conjecture, formulated in \cite{MR2}. First, we fix the following:
\begin{itemize}
\item{a bijection $\{ \mbox{all the places of }\bbQ\} \simeq \bbZ_{\geq0}$ such that $\infty$ (the infinite place of $\bbQ$) corresponds to $0$  (from this, we endow a total order on $\{ \mbox{all the places of }\bbQ\}$),}
\item{for each place $v$ of $\bbQ$, a place of $\overline \bbQ$ lying above $v$.}
\end{itemize}
Let $F/\bbQ$ be a real quadratic field, and $\chi$ be the corresponding Dirichlet character with conductor $f$. 
Let $n$ be a square-free product of primes not dividing $f$. Put 
$$n_{\pm}=\prod_{\ell | n,\chi(\ell)=\pm 1}\ell,$$
(throughout this section, $\ell$ always denotes a prime number), and let $\nu_\pm$ be the number of prime divisors of $n_\pm$. 
Let 
$$\alpha_n=\left( \sum_{\sigma \in \Gal(\bbQ(\mu_{nf})/\bbQ(\mu_n))}\chi(\sigma)\sigma \right) (1-\zeta_{nf}) \in F(\mu_n)^\times,$$
where for any positive integer $m$, $\mu_m$ denotes the group of $m$-th roots of unity in $\overline \bbQ$, and $\zeta_m =e^{\frac{2\pi i}{m}}$ (the embedding $\overline \bbQ \hookrightarrow \bbC$ is fixed above). Put 
$$\theta_n=\sum_{\sigma \in \Gal(F(\mu_n)/F)}\sigma \alpha_n \otimes \sigma \in F(\mu_n)^\times \otimes_\bbZ \bbZ[\Gal(F(\mu_n)/F)].$$
Let $I_n$ be the augmentation ideal of $\bbZ[\Gal(F(\mu_n)/F)]$. Note that the natural map 
$$F^\times \otimes_\bbZ I_n^{\nu_+}/I_n^{\nu_+ +1} \otimes_\bbZ \bbZ[\frac12] \longrightarrow F(\mu_n)^\times \otimes_\bbZ I_n^{\nu_+}/I_n^{\nu_+ +1} \otimes_\bbZ \bbZ[\frac12]$$
is injective (see \cite[Lemma 9.2]{D}).

\begin{proposition}[Darmon {\cite[Theorem 4.5 (2)]{D}}] \label{darprop}
We have $\theta_n \in F(\mu_n)^\times \otimes_\bbZ I_n^{\nu_+}$ and the image of $\theta_n$ in $F(\mu_n)^\times \otimes_\bbZ I_n^{\nu_+}/I_n^{\nu_+ +1} \otimes_\bbZ \bbZ[\frac12]$ belongs to $F^\times \otimes_\bbZ I_n^{\nu_+}/I_n^{\nu_+ +1} \otimes_\bbZ \bbZ[\frac12]$.
\end{proposition}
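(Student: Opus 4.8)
The plan is to deduce this statement from the general machinery developed above, by recognizing $\theta_n$ as (essentially) a higher norm of a Rubin-Stark unit in the rank-one case and then invoking Conjecture \ref{descent} in a situation where it is a theorem. First I would set up the dictionary: take $k = \bbQ$, $L = F$, $L' = F(\mu_n)$, so that $G(L'/L) = \Gal(F(\mu_n)/F)$ and $I(L'/L) = I_n$. One chooses $S$ to consist of $\infty$, the primes dividing $nf$, and $V' = \{\infty\}$ (so $r' = 1$), while $V$ is $\{\infty\}$ together with the primes $\ell \mid n_+$ (those inert in $F$, which therefore split completely in $F(\mu_n)$ only after... — more precisely one must check the splitting conditions in $\Omega$ carefully), so that $d = r - r' = \nu_+$. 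Over $\bbQ$ the Rubin-Stark conjecture is known (cited as \cite[Theorem A]{B}), so $\varep_{L',S,V'}$ exists and, up to an explicit cyclotomic/Euler factor, is the cyclotomic unit $\alpha_n$ appearing in Darmon's construction; similarly $\varep_{L,S,V}$ is built from $1 - \zeta_f$. The key identification is that $\theta_n$ coincides, under the isomorphism $Q_{L'/L}^n \simeq \bbZ[\cG_L] \otimes_\bbZ Q(L'/L)^n$ and up to units, with $\N_{L'/L}^{(1,d)}(\varep_{L',S,V'})$ reduced modulo $I_n^{\nu_+ + 1}$.

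With this dictionary in hand, the statement ``$\theta_n \in F(\mu_n)^\times \otimes I_n^{\nu_+}$'' becomes a divisibility assertion: the image of $\N_{L'/L}^{(1,d)}(\varep_{L',S,V'})$ in $F(\mu_n)^\times \otimes \bbZ[G(L'/L)]/I_n^{\nu_+}$ vanishes. I would prove this by applying the maps $\Phi \in \bigwedge^1_{\cG_{L'}}\Hom_{\cG_{L'}}(\cO_{L',S,T}^\times,\bbZ[\cG_{L'}])$ and invoking the rank-one case of Conjecture \ref{phiconj}, which is unconditionally true by Remark \ref{remphiconj} (Proposition \ref{propnorm} applies when $r' = 1$): for each such $\Phi$, $\Phi(\varep_{L',S,V'}) = \Phi^{G(L'/L)}(i^{-1}(\N_{L'/L}^{(1,d)}(\varep_{L',S,V'})))$, and the left-hand side lies in $I_{L'/L}^d$ by the norm-compatibility of Rubin-Stark elements (Proposition \ref{nr}) combined with the reciprocity-law computation for the $\varphi_v$; since the $\Phi$ separate points after inverting nothing — or, if a denominator appears, after inverting $2$ — on the relevant torsion-free lattice, the claimed integrality follows. (Here one uses that $\cO_{L',S',T}^\times$ is torsion-free, which forces a choice of auxiliary $T$; Darmon's formulation suppresses $T$, so a short argument comparing the $T$-modified and non-modified units, as in \cite{D}, is needed, and this is where the $\bbZ[\tfrac12]$ enters.)

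For the second assertion — that the image of $\theta_n$ in $F(\mu_n)^\times \otimes I_n^{\nu_+}/I_n^{\nu_+ + 1} \otimes \bbZ[\tfrac12]$ descends to $F^\times \otimes I_n^{\nu_+}/I_n^{\nu_+ + 1} \otimes \bbZ[\tfrac12]$ — this is precisely the $2$-inverted version of Conjecture \ref{descent} in the rank-one case: the assertion $\N_{L'/L}^{(1,d)}(\varep_{L',S,V'}) \in \Im i$, where $i$ is the injection of Lemma \ref{inj} sending $(\bigcap^1_{\cG_L}\cO_{L,S',T}^\times) \otimes Q(L'/L)^d$ into $(\bigcap^1_{\cG_{L'}}\cO_{L',S',T}^\times) \otimes \bbZ[G(L'/L)]/I(L'/L)^{d+1}$, and where for $r' = 1$ the map $i$ is simply the inclusion induced by $\cO_{L,S',T}^\times \subset \cO_{L',S',T}^\times$ tensored with the augmentation filtration (cf. the description of $i$ in Proposition \ref{propnorm}). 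I would establish this by showing directly that each Galois-translate $\sigma\alpha_n \otimes \sigma$, summed, is fixed by $\Gal(F(\mu_n)/F)$ modulo $I_n^{\nu_+ + 1}$, using the elementary identity $\tau \N_{L'/L}^{(1,d)}(x) \otimes 1 \equiv \N_{L'/L}^{(1,d)}(x)$ in the bimodule structure modulo one more power of the augmentation ideal; combined with Darmon's Galois-descent lemma (\cite[Lemma 9.2]{D}, cited above) giving injectivity of $F^\times \otimes \cdots \hookrightarrow F(\mu_n)^\times \otimes \cdots$, this pins down the descended element. The main obstacle I anticipate is bookkeeping rather than conceptual: matching signs, Euler factors $\prod(1 - \Fr_v^{-1})$, and the normalization of $\alpha_n$ against $\varep_{L',S,V'}$ — in particular verifying that the auxiliary set $V$ and the splitting hypotheses genuinely place $(F(\mu_n), S, V')$ and $(F, S, V)$ in $\Omega(\bbQ, T)$ for a suitable $T$, and controlling the $2$-primary discrepancy introduced by passing between $T$-modified and classical cyclotomic units so that exactly one power of $2$ needs to be inverted, no more.
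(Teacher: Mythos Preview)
The paper does not prove Proposition~\ref{darprop}; it is quoted from Darmon. The nearest in-paper analogue is Proposition~\ref{propdes}, whose proof in \S\ref{proof2} reproduces Darmon's direct argument: induction on $\nu_+$, the identity
\[
\sum_{\sigma}\sigma(\cdot)\otimes\sigma_-^{-1}\prod_{\ell\mid n_+}(\sigma_\ell^{-1}-1)
=\xi_n+\sum_{d\mid n_+,\,d\neq n_+}(-1)^{\nu(n_+/d)}\xi_{n_-d}\prod_{\ell\mid n_+/d}(1-\Fr_\ell^{-1}),
\]
recognition of the leading term as $(-1)^{\nu_+}D_{n_+}(\cdot)\otimes\prod(\gamma_\ell-1)$ via Kolyvagin's derivative, and finally descent from $(\cO^\times/(\cO^\times)^m)^{G_{n_+}}$ to $\cO_{L}^\times/(\cO_L^\times)^m$ using the vanishing of $H^1(G_{n_+},\cO^\times_{L(\mu_{n_+}),\Sigma,T})$.

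Your route is circular relative to the logical structure of the paper. You plan to deduce both assertions from the general framework --- the containment $\Phi(\varep_{L',S,V'})\in I_{L'/L}^d$ and then Conjecture~\ref{descent} --- ``in a situation where it is a theorem''. But in this paper neither is available independently of Darmon's argument: $\Phi(\varep_{L',S,V'})\in I_{L'/L}^d$ is Conjecture~\ref{bconj}, obtained only via ETNC (Theorem~\ref{bthm}), and Conjecture~\ref{descent} is never proved in full --- only Proposition~\ref{propdes}, whose proof \emph{is} the Darmon induction you would be trying to avoid. Worse, your appeal to Proposition~\ref{propnorm}/Conjecture~\ref{phiconj} to write $\Phi(\varep)=\Phi^{G(L'/L)}(i^{-1}(\N^{(1,d)}(\varep)))$ already presupposes $\N^{(1,d)}(\varep)\in\Im i$, which is the descent you are trying to establish.

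There is also a specific gap in your argument for the second assertion. Granting part one, the identity $\tau\cdot\N^{(1,d)}(x)=\N^{(1,d)}(x)\cdot\tau$ does show that the image of $\theta_n$ in $F(\mu_n)^\times\otimes I_n^{\nu_+}/I_n^{\nu_++1}$ is $\Gal(F(\mu_n)/F)$-invariant. But invariance is not descent: the map $F^\times\otimes A\to (F(\mu_n)^\times\otimes A)^{G}$ need not be surjective when $A=I_n^{\nu_+}/I_n^{\nu_++1}$ has torsion. The lemma you cite (\cite[Lemma~9.2]{D}) gives only injectivity and cannot close this gap; what is actually needed is a cohomological vanishing (as in the proof of Proposition~\ref{propdes}) or, equivalently, the Kolyvagin-derivative argument identifying the element explicitly as coming from $L^\times$.
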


We often denote the image of $\theta_n$ in $F(\mu_n)^\times \otimes_\bbZ I_n^{\nu_+}/I_n^{\nu_+ +1} \otimes_\bbZ \bbZ[\frac12]$ also by $\theta_n$. 

Next, write $n_+=\prod_{i=1}^{\nu_+}\ell_i$ so that $\ell_1 \prec \cdots \prec \ell_{\nu_+}$ (``$\prec$" is the total order fixed above), and let $\lambda_i$ be the fixed place of $F$ lying above $\ell_i$. Let $\lambda_0$ 
be the fixed place of $F$ lying above $\infty$. Let $\tau$ be the generator of $\Gal(F/\bbQ)$. 
Take $u_0,\ldots,u_{\nu_+} \in \cO_F[\frac1n]^\times$ such that $\{(1-\tau)u_i\}_{0\leq i \leq \nu_+}$ forms a $\bbZ$-basis of $(1-\tau)\cO_F[\frac1n]^\times$ (which is in fact a free abelian group of rank $\nu_+ +1$, see \cite[Lemma 3.2 (ii)]{MR2}), and $\det(\log|(1-\tau)u_i|_{\lambda_j})_{0\leq i,j \leq \nu_+} >0$. Put
$$R_n=(-1)^{\nu_+}(\varphi_{\ell_1}^1 \wedge \cdots \wedge \varphi_{\ell_{\nu_+}}^1)((1-\tau)u_0 \wedge \cdots \wedge (1-\tau)u_{\nu_+}) \in (1-\tau)\cO_F[\frac1n]^\times \otimes_\bbZ I_n^{\nu_+}/I_n^{\nu_+ +1},$$
where 
$$\varphi_{\ell_i}^1 : F^\times \longrightarrow I_n/I_n^2$$
is defined by $\varphi_{\ell_i}^1=\rec_{\lambda_i}(\cdot)-1$, where $\rec_{\lambda_i} : F^\times \rightarrow \Gal(F(\mu_n)/F)$ is the local reciprocity map at $\lambda_i$. Note that we have 
$$R_n=\det \left(
\begin{array}{ccc}
(1-\tau)u_0 & \cdots & (1-\tau)u_{\nu_+} \\
\varphi_{\ell_1}^1((1-\tau)u_0) & \cdots &\varphi_{\ell_1}^1((1-\tau)u_{\nu_+}) \\
\vdots & \ddots &\vdots \\
\varphi_{\ell_{\nu_+}}^1((1-\tau)u_0) & \cdots &\varphi_{\ell_{\nu_+}}^1((1-\tau)u_{\nu_+})
\end{array}
\right).$$
Finally, let $h_n$ denote the $n$-class number of $F$, i.e. the order of the Picard group of $\Spec \cO_F[\frac1n]$.

Now Darmon's conjecture is stated as follows.

\begin{conjecture}[Darmon {\cite[Conjecture 4.3]{D}}, {\cite[Conjecture 3.8]{MR2}}] \label{dconj}
$$\theta_n=-2^{\nu_-}h_nR_n \quad \mbox{in}\quad (F(\mu_n)^\times/\{ \pm1 \})\otimes_\bbZ I_n^{\nu_+}/I_n^{\nu_+ +1}.$$
\end{conjecture}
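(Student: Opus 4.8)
The goal is to prove Conjecture \ref{dconj} away from its $2$-part (the theorem of Mazur--Rubin), by recognizing Darmon's conjecture as the rank-one instance of the $p$-part of Conjecture \ref{hnr} for every odd $p$ and invoking Theorem \ref{mainthm}. The first step is to translate the set-up: take $k=\bbQ$, $L=F$, $L'=F(\mu_n)$, so that $H=G(L'/L)=\Gal(F(\mu_n)/F)$, $\cG_L=\Gal(F/\bbQ)$ has order $2$, and the augmentation filtration of $\bbZ[H]$ is Darmon's $I_n$-filtration; choose $T$ to consist of two primes of distinct residue characteristic prime to $2nf$ (so the relevant $(S,T)$-unit groups are torsion-free and the Rubin--Stark framework applies); and choose $S=S'$ and $V\supset V'$ adapted to the tower with $r'=|V'|=1$ and $V\setminus V'$ equal to the set of primes dividing $n_+$, hence $r=|V|=1+\nu_+$ and $d=r-r'=\nu_+$, using the reductions of Propositions \ref{nr}, \ref{ordr} and \ref{propred} to normalize to the case $S=S'$, $r=r_{L,S}$, $r'=r_{L',S}$ required by Theorem \ref{mainthm}. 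Under these identifications $Q(L'/L)^d\simeq I_n^{\nu_+}/I_n^{\nu_++1}$ by (\ref{eqaug}), and Darmon's $\theta_n$ becomes, up to the $T$-Euler factors and the harmless substitution $\sigma\leftrightarrow\sigma^{-1}$, the higher norm $\N_{L'/L}^{(r',d)}(\varep_{L',S,V'})$, because by Lemma \ref{reginj} the rank-one Rubin--Stark unit over $F(\mu_n)$ agrees up to sign with the cyclotomic element $\alpha_n$.

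Next I would verify the hypotheses of Theorem \ref{mainthm} for each odd $p$. The condition $p\nmid|\cG_L|=2$ is exactly ``$p$ odd''. The $p$-part of Conjecture \ref{phiconj} holds automatically, since $r'=1$ (Remark \ref{remphiconj}, via Proposition \ref{propnorm}). The conjecture of \cite[\S 6.3]{B} for $L'/k=F(\mu_n)/\bbQ$ is the ETNC for a Tate motive of an abelian field, hence a theorem of Burns--Greither and Flach (Remark \ref{remetnc}). So the only genuinely conjectural input left is the $p$-part of Conjecture \ref{descent} in the rank-one case, i.e. that $\N_{L'/L}^{(1,\nu_+)}(\varep_{L',S,V'})$ lies in the image of the injection $i$ of Lemma \ref{inj}; but this is precisely Darmon's descent statement for $\theta_n$ — that $\theta_n\in F(\mu_n)^\times\otimes_\bbZ I_n^{\nu_+}$ and that its class modulo $I_n^{\nu_++1}$ comes from $F^\times$ after inverting $2$ — which is Proposition \ref{darprop} (\cite[Theorem 4.5 (2)]{D}) together with the injectivity in \cite[Lemma 9.2]{D}, matched to $i$ via Remark \ref{reminj}.

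Granting this, Theorem \ref{mainthm} yields the $p$-part of Conjecture \ref{hnr} for all odd $p$, hence, after tensoring with $\bbZ[\tfrac12]$, the identity
$$i^{-1}(\N_{L'/L}^{(r',d)}(\varep_{L',S,V'}))=\sgn(V,V')\,(\bigwedge_{v\in V\setminus V'}\varphi_v)(\varep_{L,S,V}).$$
It remains to rewrite both sides in Darmon's terms. On the left, $i$ corresponds to the natural injection $F^\times\otimes_\bbZ I_n^{\nu_+}/I_n^{\nu_++1}\otimes\bbZ[\tfrac12]\hookrightarrow F(\mu_n)^\times\otimes_\bbZ I_n^{\nu_+}/I_n^{\nu_++1}\otimes\bbZ[\tfrac12]$ of \cite[Lemma 9.2]{D}, and the $T$-Euler factors are units in $\bbZ[\tfrac1{2nf}]$ which can be cleared (or compensated by enlarging $T$ compatibly on both sides), so the left side is $\theta_n$. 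On the right, since $r=r_{L,S}$ and $\cG_L=\langle\tau\rangle$ the element $\varep_{L,S,V}$ is supported on the minus part, and the Rubin--Stark equality $R_V(\varep_{L,S,V})=\Theta_{F,S,T}^{(r)}(0)$ together with the analytic class number formula identifies $\varep_{L,S,V}$ with $\pm h_n\,(1-\tau)u_0\wedge\cdots\wedge(1-\tau)u_{\nu_+}$ up to the factor $2^{\nu_-}$ coming from the Euler values $(1-\chi(\ell))=2$ at the primes $\ell\mid n_-$ that are inert in $F$ (the $2$-power and the number $w_F=2$ being exactly what is lost on passing between the $(S,T)$-normalization and Darmon's $\bbZ[\tfrac12]$-formulation); and $\bigwedge_{v\in V\setminus V'}\varphi_v=\bigwedge_{\ell\mid n_+}\varphi_\ell$ is, up to $\sgn(V,V')$ and the normalization $(-1)^{\nu_+}$ built into $R_n$, the determinant defining $R_n$. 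Matching the constants gives $\theta_n=-2^{\nu_-}h_nR_n$ in $(F(\mu_n)^\times/\{\pm1\})\otimes_\bbZ I_n^{\nu_+}/I_n^{\nu_++1}$ after inverting $2$, which is Conjecture \ref{dconj} away from $2$.

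The main obstacle will be this last comparison: making every constant line up — extracting precisely the factor $2^{\nu_-}$ and the class number $h_n$ (rather than some other $S$-class number) from $\Theta_{F,S,T}^{(r)}(0)$, and pinning down all the signs ($\sgn(V,V')$, the $(-1)^{\nu_+}$ in $R_n$, the sign relating $\varep_{L',S,V'}$ to $\alpha_n$ and $\varep_{L,S,V}$ to the $(1-\tau)u_i$, and the $\sigma\leftrightarrow\sigma^{-1}$ flip) — together with the bookkeeping needed to pass cleanly between the $(S,T)$-normalization of the Rubin--Stark framework and Darmon's $\bbZ[\tfrac12]$-coefficient formulation. Verifying the $p$-part of Conjecture \ref{descent} in the rank-one case is the other key ingredient, but it is essentially already contained in \cite{D}.
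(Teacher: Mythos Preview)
Your overall strategy is exactly the paper's: specialize Conjecture \ref{hnr} to the rank-one case and invoke Theorem \ref{mainthm}, then unwind the normalizations. There are, however, two concrete errors that would block the argument as written.

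First, your choice $L'=F(\mu_n)$ does not satisfy the hypotheses of the Rubin--Stark framework: for $n\geq 3$ the field $F(\mu_n)$ is not totally real, so the archimedean place of $\bbQ$ does not split completely in $L'$, and $(L',S,V')\notin\Omega$ for $V'=\{\infty\}$. The paper takes $L'=F(\mu_n)^+$, the maximal real subfield; this forces an extra translation step (Lemma \ref{corrlem}) between Darmon's $I_n$-filtration on $\bbZ[\Gal(F(\mu_n)/F)]$ and the filtration on $\bbZ[G(L'/L)]$, which is harmless after inverting $2$ since the kernel of $\Gal(F(\mu_n)/F)\to G(L'/L)$ has order $2$.

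Second, the $T$-Euler factor $\delta_T=\prod_{\ell\in T}(1-\ell\,\Fr_\ell^{-1})$ is \emph{not} a unit in $\bbZ[\tfrac{1}{2nf}][\cG_{L'}]$ (at the trivial character it specializes to $\prod_{\ell\in T}(1-\ell)$), so you cannot simply ``clear'' it. The paper removes the $T$-dependence by choosing a finite family $\cT$ and coefficients $a_T\in\bbZ[\cG_{L'}]$ with $\sum_{T\in\cT}a_T\delta_T=2$ (Proposition \ref{propT}), then summing the identities over $T\in\cT$ (Lemma \ref{Tlem}). A smaller point: the descent input is not literally Proposition \ref{darprop} but an $(S,T)$-variant proved directly (Proposition \ref{propdes}) via the Kolyvagin-derivative argument, and only after enlarging $S$ to a larger set $\Sigma$; this weaker statement still suffices. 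The regulator bookkeeping you flag as the main obstacle (extracting $2^{\nu_-}h_n$ and the signs) is carried out in Lemmas \ref{Tlem}(ii) and \ref{compute}.
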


Mazur and Rubin proved that this conjecture holds ``except 2-part".

\begin{theorem}[Mazur-Rubin {\cite[Theorem 3.9]{MR2}}] \label{mrthm}
$$\theta_n=-2^{\nu_-}h_nR_n \quad \mbox{in} \quad F^\times\otimes_\bbZ I_n^{\nu_+}/I_n^{\nu_+ +1}\otimes_\bbZ \bbZ[\frac12].$$
\end{theorem}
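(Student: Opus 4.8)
The plan is to deduce Theorem \ref{mrthm} from the main theorem, Theorem \ref{mainthm}, by translating Darmon's conjecture into a special case of Conjecture \ref{hnr}. First I would set up the dictionary: take $k=\bbQ$, $L=F$, $L'=F(\mu_n)$, $T$ a suitable auxiliary set of primes (e.g. containing two primes of distinct residue characteristic, not dividing $nf$, so that the relevant unit groups are torsion-free and the $(S,T)$-modification is harmless after inverting a controlled integer), and $S=\{\infty\}\cup\{\ell\mid n\}\cup\{\ell\mid f\}\cup T$-complement data chosen so that $(L,S,V),(L',S,V')\in\Omega$ with $V=\{\infty\}\cup\{\lambda\text{'s over }n_+\}$ (so $r=r_V=\nu_++1$) and $V'=\{\infty\}$ (so $r'=1$, $d=\nu_+$). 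The places in $V\setminus V'$ are exactly the primes dividing $n_+$, which by definition split in $F$ (since $\chi(\ell)=+1$) but are ramified — indeed totally ramified in the relevant cyclotomic direction — in $L'=F(\mu_n)$; this is the case to which Proposition \ref{unram} does \emph{not} apply, so the content is genuine.

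The next step is to identify each ingredient of Conjecture \ref{hnr} with its Darmon counterpart. The Rubin-Stark unit $\varep_{L',S,V'}$ for the rank-one datum $(F(\mu_n)/\bbQ,S)$ is, up to the standard Euler-factor normalization and sign, the cyclotomic element $\alpha_n$ (this is the classical fact that Stark/Rubin-Stark units at $\infty$ for abelian fields are cyclotomic units; cf. the references in Remark \ref{rzero} and \cite[Theorem A]{B} for $k=\bbQ$, which also guarantees Conjecture \ref{rsconj} holds here). Then $\N_{L'/L}^{(1,d)}(\varep_{L',S,V'})$ is exactly Darmon's theta-element $\theta_n$ (again up to normalization), and Proposition \ref{darprop} is precisely the $r'=1$ instance of Conjecture \ref{descent}, so Conjecture \ref{descent} holds in this case; and by Remark \ref{remphiconj}, Conjecture \ref{phiconj} also holds automatically when $r'=1$. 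On the right-hand side, $\varep_{L,S,V}$ is the Rubin-Stark unit for the rank-$(\nu_++1)$ datum over $F$, whose regulator is a higher derivative of the equivariant $L$-function; by Proposition \ref{ordr} (raising the rank by adding the split places over $n_+$) together with the analytic class number formula for $F$ over the set $S$, one checks that $(\bigwedge_{v\in V\setminus V'}\varphi_v)(\varep_{L,S,V})$ equals $\pm h_n R_n$ up to the explicit $2$-power factor $2^{\nu_-}$ — here the $2^{\nu_-}$ arises from comparing $\cO_F[\tfrac1n]^\times$ with its $(1-\tau)$-part and the units $u_i$, i.e. from the index computation in \cite[Lemma 3.2]{MR2}, \cite[Lemma 9.2]{D}. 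Matching the sign $\sgn(V,V')$ against $(-1)^{\nu_+}$ and the Euler factors $\prod_{v\in S'\setminus S}(1-\Fr_v^{-1})$ (which is trivial since $S=S'$) completes the identification: Conjecture \ref{hnr} in this case is equivalent to Theorem \ref{mrthm} after inverting $2$.

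Finally I would invoke Theorem \ref{mainthm}: choose $p\neq 2$; then $p\nmid|\cG_L|=|\Gal(F/\bbQ)|=2$, the hypothesis on Conjecture \ref{phiconj} and on Conjecture \ref{descent} are satisfied (by Remarks \ref{remphiconj} and the $r'=1$ remark, resp. Proposition \ref{darprop}), and the conjecture in \cite[\S6.3]{B} for $L'=F(\mu_n)/\bbQ$ holds because $F(\mu_n)$ is abelian over $\bbQ$ (Remark \ref{remetnc}, via \cite{BG},\cite{F}). Hence the ``$p$-part'' of Conjecture \ref{hnr} holds for every odd $p$, which is exactly Theorem \ref{mrthm}.

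I expect the main obstacle to be the bookkeeping in the second paragraph: pinning down the precise normalization (Euler factors at primes dividing $f$ and at $T$-primes, the passage from $(S,T)$-units to $S$-units, and the sign $\sgn(V,V')$ versus $(-1)^{\nu_+}$) so that the abstract identity $i^{-1}(\N^{(r',d)}_{L'/L}(\varep_{L',S,V'}))=\sgn(V,V')(\bigwedge_{v\in V\setminus V'}\varphi_v)(\varep_{L,S,V})$ matches $\theta_n=-2^{\nu_-}h_nR_n$ on the nose in $F^\times\otimes I_n^{\nu_+}/I_n^{\nu_++1}\otimes\bbZ[\tfrac12]$, and in particular verifying that the $2^{\nu_-}$ factor (the one piece genuinely outside the $2$-integral formalism) comes out correctly from the unit-index comparison. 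The regulator comparison via Proposition \ref{ordr} and the analytic class number formula, while routine, also needs care to ensure no stray rational factors appear.
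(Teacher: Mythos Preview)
Your overall strategy matches the paper's, but there are two genuine gaps.

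First, $L'=F(\mu_n)$ does not work: for $n>2$ the field $F(\mu_n)$ is not totally real, so the archimedean place $\infty$ of $\bbQ$ does not split completely in $L'$, and $(L',S,V'=\{\infty\})\notin\Omega$. The paper takes $L'=F(\mu_n)^+$; this forces an extra translation step (the paper's Lemma~\ref{corrlem}) between $\Gal(F(\mu_n)/F)$-augmentation quotients, where $\theta_n$ and $R_n$ live, and $G(L'/L)$-augmentation quotients, where Conjecture~\ref{hnr} is stated.

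Second, and more seriously, Proposition~\ref{darprop} is \emph{not} the $r'=1$ case of Conjecture~\ref{descent}. It only places $\theta_n$ in $F^\times\otimes I_n^{\nu_+}/I_n^{\nu_++1}\otimes\bbZ[\tfrac12]$, whereas Conjecture~\ref{descent} demands that $\N_{L'/L}^{(1,\nu_+)}(\varep_T')$ land in $\cO_{L,S,T}^\times\otimes Q(L'/L)^{\nu_+}$. The paper explicitly says it \emph{cannot} prove Conjecture~\ref{descent} here; instead it proves the weaker Proposition~\ref{propdes} (landing in $\cO_{L,\Sigma,T}^\times$ for $\Sigma$ large) via a Kolyvagin-derivative computation, and then reassembles the ingredients of Theorem~\ref{mainthm} by hand (Proposition~\ref{propnorm}, Theorems~\ref{thminj} and~\ref{bthm}, Proposition~\ref{prop}) to obtain Theorem~\ref{hnr2}. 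So you cannot simply invoke Theorem~\ref{mainthm} as a black box. Relatedly, you wave away the auxiliary set $T$: the Rubin--Stark element $\varep_T'$ is $\delta_T$ times a cyclotomic unit, not $\alpha_n$ itself, and removing $T$ requires the averaging trick of Proposition~\ref{propT} together with Lemma~\ref{Tlem}, which is also where the factor $2^{\nu_-}h_n$ is produced via the regulator computation of Lemma~\ref{compute}. Your sketch correctly anticipates that this bookkeeping is the crux, but underestimates that it is where most of the actual work lies.
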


\subsection{Proof of Theorem \ref{mrthm}}
We keep the notations in the previous subsection, and also use the notations defined in \S \ref{secconj}. We specialize the general setting of \S \ref{secconj} into the following: 
\begin{itemize}
\item{$k=\bbQ$,}
\item{$L=F$ (a real quadratic field),}
\item{$L'=F(\mu_n)^+$ (the maximal real subfield of $F(\mu_n)$),}
\item{$S=S'=\{ \infty \} \cup \{ \mbox{primes dividing }nf \}$,}
\item{$V=\{ \infty \}\cup \{\mbox{primes dividing }n_+ \}$,}
\item{$V'=\{ \infty \}$,}
\item{$T$: a finite set of places of $\bbQ$ such that 
\begin{itemize}
\item{$S\cap T = \emptyset$,}
\item{$\cO_{L',S,T}^\times$ is torsion-free.}
\end{itemize}
}
\end{itemize}

Then one sees that $(L,S,V), (L',S,V') \in \Omega=\Omega(\bbQ,T)$. 

It is known that the Rubin-Stark conjecture (Conjecture \ref{rsconj}) for all the triples in $\Omega$ holds (\cite[Theorem A]{B}). Let 
$$\varep_T=\varep_{L,S,T,V}\in \bigcap_{\cG_L}^{\nu_+ +1}\cO_{L,S,T}^\times \quad(\mbox{resp. }\varep_T'=\varep_{L',S,T,V'} \in \bigcap_{\cG_{L'}}^1 \cO_{L',S,T}^\times =\cO_{L',S,T}^\times)$$
denote the Rubin-Stark unit for the triple $(L,S,V)$ (resp. $(L',S,V')$) (later we will vary $T$, so we keep in the notation the dependence on $T$).

Note that, since $r'=1$ in this setting, Conjecture \ref{phiconj} holds (see Remark \ref{remphiconj}). Note also that, since $F(\mu_n)^+$ is abelian over $\bbQ$, the conjecture in \cite[\S 6.3]{B} holds (see Remark \ref{remetnc}). So, by Theorem \ref{mainthm}, if we show the ``except $2$-part" of Conjecture \ref{descent}, then we know that the ``except $2$-part" of Conjecture \ref{hnr} holds. The ``except $2$-part" of Conjecture \ref{hnr} implies Theorem \ref{mrthm}, as we will explain below. Unfortunately, we cannot prove Conjecture \ref{descent} completely. Instead, we prove the following weak version of it:

\begin{proposition} \label{propdes}
Let $\Sigma$ be a finite set of places of $\bbQ$, which contains $S$ and is disjoint from $T$. If $\Sigma$ is large enough, then we have 
$$\N_{L'/L}^{(1, \nu_+)}(\varep_T') \in \cO_{L,\Sigma,T}^\times \otimes_\bbZ Q(L'/L)^{\nu_+}\otimes _\bbZ \bbZ[\frac12].$$
\end{proposition}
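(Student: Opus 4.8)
The plan is to use the compatibility of Rubin--Stark units under enlarging the set $S$ (Proposition \ref{nr}) to reduce the question to the structure of the augmentation-quotient filtration, and then to exploit that we are allowed to invert $2$. First I would pick an auxiliary prime $\ell_0$, not in $S\cup T$, which splits completely in $L'=F(\mu_n)^+$ (such primes exist in abundance by Chebotarev), and set $\Sigma=S\cup\{\ell_0\}$; more generally I would add finitely many such split primes if needed. For the triple $(L',\Sigma,V')\in\Omega$ the norm relation of Proposition \ref{nr} gives
$$
\N_{L'/L}^{1}(\varep_{L',\Sigma,V'})=\bigl(1-\Fr_{\ell_0}^{-1}\bigr)\varep_{L,\Sigma\setminus\{\ell_0\},V'}\cdots,
$$
wait --- more to the point, I would instead run the argument directly with the set $S$ and use the freedom in $\Sigma$ only to guarantee that the relevant unit group $\cO_{L,\Sigma,T}^\times$ is large enough to contain the ``descent'' of $\N_{L'/L}^{(1,\nu_+)}(\varep_T')$. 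Concretely, the target lattice in Conjecture \ref{descent} is $(\bigcap_{\cG_L}^1\cO_{L,S,T}^\times)\otimes Q(L'/L)^{\nu_+}=\cO_{L,S,T}^\times\otimes Q(L'/L)^{\nu_+}$ (since $r'=1$), embedded via the inclusion $i$ of Lemma \ref{inj} into $\cO_{L',S,T}^\times\otimes\bbZ[G(L'/L)]/I(L'/L)^{\nu_++1}$. The assertion is that after enlarging $S$ to $\Sigma$ and inverting $2$, the element $\N_{L'/L}^{(1,\nu_+)}(\varep_T')$ lands in the image of $i$.

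The key steps, in order, would be: (1) observe by Remark \ref{reminj} and Remark \ref{remnorm} that the $0$-th norm always descends, so the obstruction lives entirely in the graded pieces $Q(L'/L)^1,\dots,Q(L'/L)^{\nu_+}$ of the augmentation filtration; (2) use the explicit description of $\varep_T'$ and $\varep_T$ via the Rubin--Stark conjecture together with Darmon's computation --- indeed in this specialized setting $\N_{L'/L}^{(1,\nu_+)}(\varep_T')$ should be identified, up to the factor $\prod_{v\in\Sigma\setminus S}(1-\Fr_v^{-1})$, with (a $T$-modified version of) Darmon's theta-element $\theta_n$, whose integrality in $F(\mu_n)^\times\otimes I_n^{\nu_+}$ and rationality modulo $I_n^{\nu_++1}$ after inverting $2$ are exactly Proposition \ref{darprop}; (3) translate Darmon's statement (which is about $F^\times$ sitting inside $F(\mu_n)^\times$ via the \emph{naive} inclusion of exterior powers) into a statement about the \emph{canonical} injection $i$ of Lemma \ref{inj}, using Remark \ref{reminj} ($i(\N_H^{r}m)=\N_H m$) to match the two embeddings after multiplying by the relevant norm element --- this is where the role of the $2$ and of the enlargement of $\Sigma$ enters, because Proposition \ref{darprop} is only available after inverting $2$, and the injectivity statement of \cite[Lemma 9.2]{D} requires the extra places. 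Finally, (4) invoke Proposition \ref{prop} (or rather the torsion-freeness of $\bigcap_{\cG_L}^d\cO_{L,\Sigma,T}^\times\otimes\bbZ[\tfrac12]$) to conclude that the descended element, a priori only rational, is in fact integral away from $2$.

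The main obstacle I expect is step (3): carefully reconciling Darmon's ad hoc theta-element construction with the intrinsic formalism of Rubin's lattices and the canonical injection $i$, in particular tracking the precise relationship between $\theta_n$, the $T$-smoothed Rubin--Stark unit $\varep_T'$, and the Euler factors $\prod_{v\in\Sigma\setminus S}(1-\Fr_v^{-1})$ that appear when one changes $S$. A subsidiary annoyance is that Darmon works with $\cO_F[\tfrac1n]^\times$ (the $(S_0,\emptyset)$-units for $S_0=\{\infty\}\cup\{\ell\mid n\}$) whereas here we have $(S,T)$-units for the larger $S$ containing the primes dividing $f$ and for nonempty $T$; reconciling these --- via the norm relations of Proposition \ref{nr} and the standard comparison between $T$-modified and ordinary units --- will require some bookkeeping, but no essential new idea. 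Everything else (the reduction to graded pieces, the torsion-freeness input, the handling of the $2$-part) is routine given the results already established in \S\ref{alg} and \S\ref{secconj}.
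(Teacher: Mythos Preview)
Your high-level intuition --- that Proposition \ref{propdes} is essentially a $T$-smoothed, $(S,T)$-unit incarnation of Darmon's Proposition \ref{darprop} --- is correct, and the paper's proof is in exactly this spirit. But the paper does \emph{not} cite Proposition \ref{darprop} as a black box and then do bookkeeping; it reruns Darmon's Kolyvagin-derivative argument directly with the factor $\delta_T$ carried along. Concretely: one writes $\varep_T'=\N_{\bbQ(\mu_{nf})^+/L'}(\delta_T(1-\zeta_{nf}))$, forms
\[
\xi_n=\delta_T\sum_{\sigma\in G_n}\sigma\,\N_{\bbQ(\mu_{nf})/L(\mu_n)}(1-\zeta_{nf})\otimes\sigma^{-1}\qquad (G_n=\Gal(L(\mu_n)/L)),
\]
checks that $\pi(\xi_n)=2\,\N_{L'/L}^{(1,\nu_+)}(\varep_T')$, and then proves by induction on $\nu_+$ that $\xi_n$ lies in $\cO_{L(\mu_n),\Sigma,T}^\times\otimes I_n^{\nu_+}$ and descends to $\cO_{L,\Sigma,T}^\times$ modulo $I_n^{\nu_++1}$ after inverting $2$. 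The inductive step identifies the leading term with the Kolyvagin derivative $D_{n_+}\delta_T\,\N_{\bbQ(\mu_{nf})/L(\mu_{n_+})}(1-\zeta_{nf})$, which is $G_{n_+}$-invariant modulo $m$-th powers by the standard argument; descent to $L$ then follows once $H^1(G_{n_+},\cO_{L(\mu_{n_+}),\Sigma,T}^\times)=0$. \emph{That} vanishing is the reason $\Sigma$ must be large --- not the injectivity considerations you cite.

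Several details of your outline are off. Your step (3) is not a genuine obstacle: since $r'=1$, Rubin's lattice is essentially the module itself and the injection $i$ of Lemma \ref{inj} reduces to the ordinary inclusion of unit groups tensored with the augmentation quotient, so there is nothing delicate to reconcile. Your step (4) is irrelevant: no rational-to-integral passage occurs here, only descent from $L'$ to $L$. More substantively, the ``bookkeeping'' you wave aside hides a real gap: Darmon's $\theta_n$ already carries the projector $(1-\tau)$ (since $\alpha_n$ is built from the $\chi$-twisted sum), whereas $\varep_T'$ does not. Thus Proposition \ref{darprop} controls at best the $\chi$-eigencomponent of $\N_{L'/L}^{(1,\nu_+)}(\varep_T')$ and says nothing directly about the $(1+\tau)$-part; and even for the $\chi$-part you must argue that applying $\delta_T$ preserves the descent and lands in $(\Sigma,T)$-units. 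You can patch all this --- the trivial-character component is handled by the same Kolyvagin argument over $\bbQ$ --- but by then you have essentially reproduced the paper's direct computation rather than reduced to Proposition \ref{darprop}.
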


The proof of this proposition is given in \S \ref{proof2}. This proposition gives sufficient ingredients to prove the ``except $2$-part" of Conjecture \ref{hnr}: using Proposition \ref{propnorm}, Theorem \ref{thminj}, Theorem \ref{bthm}, and Proposition \ref{prop}, we have the following

\begin{theorem} \label{hnr2}
$$\N_{L'/L}^{(1,\nu_+)}(\varep_T')=(-1)^{\nu_+}(\bigwedge_{\ell | n_+}\varphi_\ell)(\varep_T)\quad \mbox{in} \quad L^\times \otimes_\bbZ Q(L'/L)^{\nu_+}\otimes_\bbZ \bbZ[\frac12].$$
\end{theorem}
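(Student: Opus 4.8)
The plan is to deduce Theorem \ref{hnr2} from the abstract framework of \S \ref{secconj} together with the weak descent statement of Proposition \ref{propdes}, following exactly the chain of implications recorded in Theorem \ref{mainthm}, but keeping careful track of the set $\Sigma$ supplied by Proposition \ref{propdes}. First I would fix a large enough $\Sigma$ so that $\N_{L'/L}^{(1,\nu_+)}(\varep_T') \in \cO_{L,\Sigma,T}^\times \otimes_\bbZ Q(L'/L)^{\nu_+} \otimes_\bbZ \bbZ[\frac12]$; since $r'=1$ the canonical injection $i$ of Lemma \ref{inj} for the triple $(L',\Sigma,V')$ over $(L,\Sigma,V')$ is simply the inclusion $\cO_{L,\Sigma,T}^\times \otimes_\bbZ Q(L'/L)^{\nu_+} \hookrightarrow \cO_{L',\Sigma,T}^\times \otimes_\bbZ \bbZ[G(L'/L)]/I(L'/L)^{\nu_+ +1}$, so Proposition \ref{propdes} is precisely the ``except $2$-part'' of Conjecture \ref{descent} for the enlarged set $\Sigma$ in place of $S$. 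One must then pass between $S$ and $\Sigma$: by Proposition \ref{nr} the Rubin-Stark units for $\Sigma$ and $S$ differ by the Euler factor $\prod_{v \in \Sigma \setminus S}(1-\Fr_v^{-1})$, and this factor commutes with everything in sight, so it is harmless to prove the identity with $\Sigma$ in place of $S$ and then divide it back out (noting that multiplication by $\prod_{v\in\Sigma\setminus S}(1-\Fr_v^{-1})$ is injective on the relevant $\bbZ[\frac12]$-torsion-free modules after $R_V$-injectivity, Lemma \ref{reginj}).

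Next I would run the argument of Theorem \ref{rel} with $\bbZ[\frac12]$-coefficients, i.e. the ``$p$-part'' version for odd $p$ as in Remark \ref{remp}. Since $r'=1$, Conjecture \ref{phiconj} holds unconditionally by Remark \ref{remphiconj} (via Proposition \ref{propnorm}), so for every $\Phi \in \bigwedge_{\cG_{L'}}^1 \Hom_{\cG_{L'}}(\cO_{L',\Sigma,T}^\times,\bbZ[\cG_{L'}])$ we have
$$\Phi(\varep_{L',\Sigma,V'}) = \Phi^{G(L'/L)}\!\left( i^{-1}(\N_{L'/L}^{(1,\nu_+)}(\varep_{L',\Sigma,V'})) \right) \quad \text{in } Q_{L'/L}^{\nu_+}.$$
On the other hand, since $L' = F(\mu_n)^+$ is abelian over $\bbQ$, the conjecture in \cite[\S 6.3]{B} holds for $L'/\bbQ$ by Remark \ref{remetnc}, so Theorem \ref{bthm} applies: $\Phi(\varep_{L',\Sigma,V'})$ equals $\sgn(V,V') \Phi^{G(L'/L)}((\bigwedge_{\ell|n_+}\varphi_\ell)(\varep_{L,\Sigma,V}))$ modulo the image of $(\bigwedge_{\cG_L}^{\nu_+}L_T^\times)_{\tors}$ in $Q_{L'/L}^{\nu_+}$. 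By Proposition \ref{prop} that torsion subgroup vanishes after inverting $|\cG_L|$, hence a fortiori after inverting $2$ (as $|\cG_L|=2$), so over $\bbZ[\frac12]$ the equality of $\Phi$-values is exact. Combining, both $i^{-1}(\N_{L'/L}^{(1,\nu_+)}(\varep_{L',\Sigma,V'}))$ and $\sgn(V,V')(\bigwedge_{\ell|n_+}\varphi_\ell)(\varep_{L,\Sigma,V})$ have the same image under $\alpha \mapsto (\Phi \mapsto \Phi^{G(L'/L)}(\alpha))$; by Theorem \ref{thminj} that map is injective on $(\bigcap_{\cG_L}^1 \cO_{L,\Sigma,T}^\times) \otimes_\bbZ Q(L'/L)^{\nu_+}$, and it stays injective after $\otimes\bbZ[\frac12]$, so the two elements are equal in $\cO_{L,\Sigma,T}^\times \otimes_\bbZ Q(L'/L)^{\nu_+}\otimes_\bbZ\bbZ[\frac12]$.

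Finally I would identify $\sgn(V,V')$ with $(-1)^{\nu_+}$ and descend back from $\Sigma$ to $S$. The sign computation is a direct unwinding of the definition: $\sgn(V,V')$ compares $(\bigwedge_{v\in V'}v^\ast)\circ(\bigwedge_{v\in V\setminus V'}v^\ast)$ with $\bigwedge_{v\in V}v^\ast$ where $V' = \{\infty\}$ is the smallest element in the fixed total order, so moving the single factor $\infty^\ast$ past the $\nu_+$ factors indexed by the primes dividing $n_+$ contributes exactly $(-1)^{\nu_+}$; this matches the normalization built into $R_n$ in the previous subsection. To remove $\Sigma$, apply Proposition \ref{nr} to both sides: on the left, $\N_{L'/L}^{(1,\nu_+)}$ is $G(L'/L)$-linear so it carries the Euler factor along, and on the right $\varep_{L,\Sigma,V} = (\prod_{v\in\Sigma\setminus S}(1-\Fr_v^{-1}))^{-1}$ times $\varep_{L,S,V}$ after inverting the factor (which is a unit in the relevant completions away from places of $\Sigma\setminus S$, or more robustly: the equality with $\Sigma$ times the identity $\N_{L'/L}^{(1,\nu_+)}(\varep_{L',\Sigma,V'}) = (\prod_{v\in\Sigma\setminus S}(1-\Fr_v^{-1}))\N_{L'/L}^{(1,\nu_+)}(\varep_{L',S,V'})$ and the analogous one for $\varphi_\ell$ forces the $S$-version once one knows the Euler factor is a non-zero-divisor on the torsion-free $\bbZ[\frac12]$-module in question). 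I expect the genuine obstacle to be this last bookkeeping with $\Sigma$: one has to check that enlarging $S$ to $\Sigma$ does not change $r$ or $r'$ (it does not, since all the new places are inert-or-ramified enough that they do not split completely in $L'$, or can be chosen so) and that the Euler factor $\prod_{v\in\Sigma\setminus S}(1-\Fr_v^{-1})$ is not a zero-divisor on $\cO_{L,S,T}^\times\otimes_\bbZ Q(L'/L)^{\nu_+}\otimes_\bbZ\bbZ[\frac12]$, which follows because this module is $\bbZ[\frac12]$-torsion-free and the factor becomes a unit after tensoring with $\bbC$ for each relevant character (the characters occurring in $\varep_{L,S,V}$ have $r(\chi,S)=r$, so the missing Euler factors are non-vanishing there). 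Everything else is a formal transcription of Theorems \ref{rel}, \ref{bthm}, \ref{thminj} and Proposition \ref{prop} into the $r'=1$, $\bbZ[\frac12]$-coefficient setting.
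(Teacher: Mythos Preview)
Your approach is the same as the paper's: the paper simply says that Theorem \ref{hnr2} follows from Proposition \ref{propnorm}, Theorem \ref{thminj}, Theorem \ref{bthm}, and Proposition \ref{prop}, and you correctly assemble exactly these ingredients in the $r'=1$, $\bbZ[\frac12]$-coefficient setting, together with the descent input Proposition \ref{propdes} and the sign identification $\sgn(V,V')=(-1)^{\nu_+}$.

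However, your handling of $\Sigma$ versus $S$ is an unnecessary detour that creates avoidable difficulties. You do not need to replace $\varep_T'=\varep_{L',S,V'}$ and $\varep_T=\varep_{L,S,V}$ by the Rubin--Stark units for $\Sigma$ and then ``divide back'' the Euler factor. The equality in Theorem \ref{hnr2} is asserted in $L^\times\otimes_\bbZ Q(L'/L)^{\nu_+}\otimes_\bbZ\bbZ[\frac12]$, so it suffices to prove it in $\cO_{L,\Sigma,T}^\times\otimes_\bbZ Q(L'/L)^{\nu_+}\otimes_\bbZ\bbZ[\frac12]$ for some large $\Sigma$. To do this, keep the original units $\varep_T'$, $\varep_T$ but take $M=\cO_{L',\Sigma,T}^\times$ as the $\cG_{L'}$-lattice in Proposition \ref{propnorm} and Theorem \ref{thminj}. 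Proposition \ref{propdes} is then literally the statement $\N_{L'/L}^{(1,\nu_+)}(\varep_T')\in\Im i$ for this $M$ (after inverting $2$). Theorem \ref{bthm} is applied with $S$, not $\Sigma$: since $\cO_{L',\Sigma,T}^\times/\cO_{L',S,T}^\times$ is torsion-free, the restriction map $\Hom_{\cG_{L'}}(\cO_{L',\Sigma,T}^\times,\bbZ[\cG_{L'}])\to\Hom_{\cG_{L'}}(\cO_{L',S,T}^\times,\bbZ[\cG_{L'}])$ is surjective, so every $\Phi$ on the larger lattice restricts to one on the smaller, and $\Phi(\varep_T')$ only depends on this restriction. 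The right-hand side $(\bigwedge_{\ell|n_+}\varphi_\ell)(\varep_T)$ already lies in $\cO_{L,S,T}^\times\otimes_\bbZ Q(L'/L)^{\nu_+}\subset\cO_{L,\Sigma,T}^\times\otimes_\bbZ Q(L'/L)^{\nu_+}$ by Proposition \ref{propphi}, so Theorem \ref{thminj} for $M=\cO_{L',\Sigma,T}^\times$ finishes the argument directly.

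By contrast, your route requires checking that $r=r_{L,\Sigma}$ and $r'=r_{L',\Sigma}$ (which is not automatic: primes in $\Sigma\setminus S$ might split completely in $L$ or $L'$), and that $\prod_{v\in\Sigma\setminus S}(1-\Fr_v^{-1})$ is a non-zero-divisor on the target module (which fails outright if some $v\in\Sigma\setminus S$ splits in $L$, since then $\Fr_v=1$ in $\cG_L$). These are not fatal---one can choose $\Sigma$ to avoid such primes---but they are complications that the simpler argument sidesteps entirely.
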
 

We will deduce Theorem \ref{mrthm} from Theorem \ref{hnr2} by varying the set $T$. 

The following proposition is well-known.

\begin{proposition} \label{propT}
There exists a finite family $\cT$ of $T$ such that $S\cap T =\emptyset$ and $\cO_{L',S,T}^\times$ is torsion-free, and for every $T\in \cT$, 
there is an $a_T \in \bbZ[\cG_{L'}]$ such that 
$$2=\sum_{T \in \cT}a_T \delta_T \quad \mbox{in} \quad \bbZ[\cG_{L'}],$$
where $\delta_T=\prod_{\ell \in T}(1-\ell \Fr_\ell^{-1}) \in \bbZ[\cG_{L'}]$.
\end{proposition}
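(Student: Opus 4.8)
The plan is to reduce the assertion to a clean divisibility statement about a single ideal of $\bbZ[\cG_{L'}]$ and then analyze the corresponding finite quotient prime by prime via the Chebotarev density theorem. First I would use that $L'=F(\mu_n)^+$ is totally real, so its group of roots of unity is $\{\pm1\}$; hence for any single prime $\ell\neq2$ the group $\cO_{L',S,\{\ell\}}^\times$ is torsion-free, because $-1\not\equiv1$ modulo any place of $L'$ above $\ell$. Therefore it suffices to produce $\cT$ consisting of finitely many singletons $\{\ell\}$ with $\ell$ an odd prime not in $S$, and the proposition reduces to showing that $2$ lies in the ideal $I$ of $R:=\bbZ[\cG_{L'}]$ generated by all $\delta_{\{\ell\}}=1-\ell\Fr_\ell^{-1}$ with $\ell$ an odd prime, $\ell\notin S$.

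Next I would show that $R/I$ is finite. For every complex character $\chi$ of $\cG_{L'}$ and every such $\ell$ we have $|\chi(\delta_{\{\ell\}})|=|1-\ell\chi(\Fr_\ell)^{-1}|\geq \ell-1>0$, so each $\delta_{\{\ell\}}$ is a unit in $\bbQ[\cG_{L'}]$; hence $I\otimes_\bbZ\bbQ=\bbQ[\cG_{L'}]$ and $R/I$ is a finite abelian group. It remains to prove that $R/I$ is killed by $2$. Suppose an odd prime $p$ divides $|R/I|$, and pick a maximal ideal $\mathfrak{m}\supseteq(p)$ of $R$; the quotient map $R\to R/\mathfrak{m}$ restricts to a character $\psi\colon\cG_{L'}\to(R/\mathfrak{m})^\times$. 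Since $L'$ is totally real while $\bbQ(\mu_p)$ is not (it is CM of degree $p-1\geq2$), we have $\bbQ(\mu_p)\not\subseteq L'$; so by Chebotarev applied to $L'(\mu_p)/\bbQ$ there is a prime $\ell$, odd and outside $S$, unramified in $L'(\mu_p)$, with $\Fr_\ell=1$ in $\Gal(L'/\bbQ)$ and $\ell\not\equiv1\pmod p$. Then $\delta_{\{\ell\}}$ maps to $1-\ell\,\psi(\Fr_\ell)^{-1}=1-\ell$ in $R/\mathfrak{m}$, which is nonzero since $p\nmid\ell-1$; thus $I\not\subseteq\mathfrak{m}$, a contradiction. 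Hence $R/I$ is a finite $2$-group.

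Finally, again because $L'$ is totally real whereas $\bbQ(i)$ is not, $\bbQ(i)\not\subseteq L'$, and Chebotarev applied to $L'(i)/\bbQ$ produces a prime $\ell$, odd and outside $S$, with $\Fr_\ell=1$ in $\Gal(L'/\bbQ)$ and $\ell\equiv3\pmod4$. For this $\ell$ one has $\delta_{\{\ell\}}=1-\ell$, and $1-\ell=2m$ with $m$ odd, so $2m\in I$; since multiplication by the odd integer $m$ is an automorphism of the finite $2$-group $R/I$, we conclude $2\in I$. Unwinding, $2=\sum_{\ell}a_\ell\,\delta_{\{\ell\}}$ for finitely many odd primes $\ell\notin S$ and $a_\ell\in\bbZ[\cG_{L'}]$, and taking $\cT=\{\{\ell\}\}$ (each with $\cO_{L',S,\{\ell\}}^\times$ torsion-free by the first step) proves the proposition.

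The only point requiring care is the joint Chebotarev conditions: one must verify that imposing "$\ell$ splits completely in $L'$" simultaneously with "$\ell\not\equiv1\pmod p$" (respectively "$\ell\equiv3\pmod4$") is possible, which amounts exactly to $\bbQ(\mu_p)\not\subseteq L'$ (respectively $\bbQ(i)\not\subseteq L'$); both hold by the total reality of $L'=F(\mu_n)^+$. The extra requirements that $\ell$ be odd, unramified, and outside $S$ exclude only finitely many primes and so do not obstruct the density argument.
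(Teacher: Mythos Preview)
Your argument is correct and complete. The paper itself gives no proof of this proposition but simply refers to \cite[Lemme~1.1, Chpt.~IV]{T}; what you have written is essentially the standard Chebotarev argument found there, specialized to the situation at hand where $L'=F(\mu_n)^+$ is totally real so that $w_{L'}=|\mu(L')|=2$.

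One small logical slip worth tightening: when you suppose an odd prime $p$ divides $|R/I|$ and ``pick a maximal ideal $\mathfrak{m}\supseteq(p)$'', the conclusion $I\not\subseteq\mathfrak{m}$ is not yet a contradiction, since you did not require $I\subseteq\mathfrak{m}$. The fix is immediate: either choose $\mathfrak{m}\supseteq I+pR$ from the start (such $\mathfrak{m}$ exists because $p\mid|R/I|$ forces $(R/I)/p(R/I)\neq 0$, hence $I+pR\neq R$), or observe that your Chebotarev prime $\ell$ depends only on $p$ and not on $\mathfrak{m}$, so you have actually shown $I\not\subseteq\mathfrak{m}$ for \emph{every} maximal ideal of residue characteristic $p$, whence $I+pR=R$ and $R/I$ has trivial $p$-part. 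Either way the conclusion $2\in I$ follows exactly as you wrote.
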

For the proof, see \cite[Lemme 1.1, Chpt. IV]{T}. Take such a family $\cT$ and $a_T$ for each $T\in\cT$. The following lemma will be proved in \S \ref{proof2}.

\begin{lemma} \label{Tlem}
{\rm(i)} 
$$(1-\tau)\sum_{T\in\cT}a_T\varep_T'=\N_{L(\mu_n)/L'}(\alpha_n) \quad \mbox{in} \quad L'^\times/\{ \pm 1 \},$$
where $\tau$ is regarded as the generator of $\Gal(L'/\bbQ(\mu_n)^+)$. \\  
{\rm(ii)}
$$(1-\tau)\sum_{T\in\cT}a_T\varep_T=(-1)^{\nu_++1}2^{\nu_-}h_n(1-\tau)u_0 \wedge \cdots \wedge u_{\nu_+} \quad \mbox{in} \quad\bbQ \otimes_\bbZ\bigwedge_{\cG_L}^{\nu_++1}\cO_{L,S}^\times.$$
\end{lemma}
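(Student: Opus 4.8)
The plan is to prove (i) and (ii) in parallel, in each case first using Proposition \ref{propT} to remove the auxiliary family $\cT$ and then invoking known facts about Rubin--Stark elements over $\bbQ$. For $K\in\{L,L'\}$ write $\delta_T^K\in\bbZ[\cG_K]$ for the image of $\delta_T$ under $\bbZ[\cG_{L'}]\to\bbZ[\cG_K]$. The $T$-factor in the Euler product defining $L_{S,T}$ gives $\Theta_{K,S,T}^{(\ast)}(0)=\delta_T^K\cdot\Theta_{K,S}^{(\ast)}(0)$, where $\Theta_{K,S}$ is the equivariant $L$-function with $T$ removed and $\ast$ the relevant order of vanishing at $s=0$; the leading term of $\Theta_{K,S}$ is realized by a classical Stark element $\varep_{K,S,W}$ which, by Stark's theorem over $\bbQ$ (cf.\ \cite[Theorem A]{B}), exists in $\bbQ\otimes_\bbZ\bigwedge_{\cG_K}^{|W|}\cO_{K,S}^\times$, and, when $|W|=1$ and $K$ is totally real, in $K^\times/\{\pm1\}$. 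Since $R_W$ is injective on $\bbQ\otimes_\bbZ\Lambda_{K,S,T}^{|W|}$ (Lemma \ref{reginj}), comparison of regulators forces $\varep_{K,S,T,W}=\delta_T^K\varep_{K,S,W}$; applying $\sum_{T\in\cT}a_T$ and Proposition \ref{propT} then yields $\sum_{T\in\cT}a_T\varep_{K,S,T,W}=2\varep_{K,S,W}$. Hence (i) reduces to $\bigl((1-\tau)\varep_{L',S,V'}\bigr)^2=\N_{L(\mu_n)/L'}(\alpha_n)$ in $L'^\times/\{\pm1\}$ ($\cO^\times$ written multiplicatively), and (ii) reduces to $(1-\tau)\varep_{L,S,V}=(-1)^{\nu_++1}2^{\nu_--1}h_n\,(1-\tau)(u_0\wedge\cdots\wedge u_{\nu_+})$ in $\bbQ\otimes_\bbZ\bigwedge_{\cG_L}^{\nu_++1}\cO_{L,S}^\times$.

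For (ii): by \cite[Lemma 3.2 (ii)]{MR2} the $(1-\tau)u_i$ $(0\le i\le\nu_+)$ form a $\bbZ$-basis of $(1-\tau)\cO_F[\frac1n]^\times$; since the primes of $S$ above $f$ are ramified in $F$, hence $\tau$-fixed and irrelevant to the minus part, the $\chi$-eigenspace of $\bbQ\otimes_\bbZ\bigwedge_{\cG_L}^{\nu_++1}\cO_{L,S}^\times$ (for $\chi$ the nontrivial character of $\cG_L=\Gal(F/\bbQ)$) is one-dimensional over $\bbQ$ with basis $(1-\tau)(u_0\wedge\cdots\wedge u_{\nu_+})$. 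Thus $(1-\tau)\varep_{L,S,V}=c\,(1-\tau)(u_0\wedge\cdots\wedge u_{\nu_+})$ for a unique $c\in\bbQ$, and I compute $c$ by applying $R_V$ and extracting $\chi$-parts. One has $e_\chi R_V(\varep_{L,S,V})=e_\chi\Theta_{L,S}^{(\nu_++1)}(0)=e_\chi\cdot 2^{\nu_-}\bigl(\prod_{\ell|n_+}\log\ell\bigr)L'(0,\chi)$, because in the $S$-truncated Dirichlet $L$-function each split prime $\ell|n_+$ contributes a simple zero with derivative $\log\ell$, each inert prime $\ell|n_-$ a factor $1+1=2$, and each ramified prime nothing; and, unwinding the determinant defining $R_V$ with the chosen ordering of $V$ and the normalization $\det(\log|(1-\tau)u_i|_{\lambda_j})_{0\le i,j\le\nu_+}>0$ of the $u_i$, one gets $e_\chi R_V(u_0\wedge\cdots\wedge u_{\nu_+})=(-1)^{\nu_++1}\det(\log|(1-\tau)u_i|_{\lambda_j})_{0\le i,j\le\nu_+}\cdot e_\chi$. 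The analytic class number formula for $\zeta_F$, in the form $L'(0,\chi)=h_F R_F$ together with the minus-part $S$-unit class number formula $\det(\log|(1-\tau)u_i|_{\lambda_j})_{0\le i,j\le\nu_+}=2\,h_F R_F\prod_{\ell|n_+}\log\ell\,/\,h_n$, then gives $c=(-1)^{\nu_++1}2^{\nu_--1}h_n$, which is (ii). (The last identity is elementary; for $n=1$ it reads $2\log\epsilon_F=2h_F\log\epsilon_F/h_F$ with $\epsilon_F$ a fundamental unit of $\cO_F$.)

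For (i), where $L'=F(\mu_n)^+$ and $V'=\{\infty\}$: Stark's theorem over $\bbQ$ describes $\varep_{L',S,V'}$ explicitly as a cyclotomic number, and the reduced identity $\bigl((1-\tau)\varep_{L',S,V'}\bigr)^2=\N_{L(\mu_n)/L'}(\alpha_n)$ in $L'^\times/\{\pm1\}$ is checked place by place: both sides lie in $\Lambda_{L',S}^1$, have the same valuations at the finite places of $S$, and have the same archimedean absolute values at the places of $L'$ above $\infty$ — the square reflecting that $L'$ is the maximal real subfield of $F(\mu_n)$, so that passing from the (complex) archimedean place of $F(\mu_n)$ to the (real) one of $L'$ doubles the contribution — and then Lemma \ref{reginj} promotes the equality of regulators to an equality in $L'^\times/\{\pm1\}$. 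This is in essence the cyclotomic-unit computation of Darmon \cite[\S 9]{D} and Mazur--Rubin \cite[\S 3]{MR2}.

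All of the above reductions are routine; the genuine content is the bookkeeping of signs and powers of $2$. In (ii) the crux is the minus-part $S$-unit class number formula, proved by tracking how the regulator determinant and $h_n=|\Pic(\cO_F[\frac1n])|$ vary as primes are added to $S$, the delicate point being to verify that the ramified primes above $f$, which affect only the plus part of the class group and the unit group, really drop out of the minus-part computation. In (i) the crux is pinning down the exact cyclotomic identity between $\alpha_n$ and $\varep_{L',S,V'}$ and confirming the square; this is where the family $\cT$ is indispensable, the factor $2=\sum_{T\in\cT}a_T\delta_T$ being precisely what matches the norm from the CM field $F(\mu_n)$ down to its totally real subfield $L'$ appearing on the right-hand side of (i).
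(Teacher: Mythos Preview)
Your reduction via Proposition~\ref{propT} to identities for the $T$-free Stark elements $\varep_{L',S,V'}$ and $\varep_{L,S,V}$ is exactly the paper's first step for both parts, and your computations of $e_\chi\Theta_{L,S}^{(\nu_++1)}(0)$ and of $e_\chi R_V(u_0\wedge\cdots\wedge u_{\nu_+})=(-1)^{\nu_++1}\det(\log|(1-\tau)u_i|_{\lambda_j})\,e_\chi$ agree with what the paper does.

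For (i), however, your proposed ``place-by-place'' verification via Lemma~\ref{reginj} is left as a sketch and is unnecessarily indirect. The paper uses the explicit formula $2\varep_T'=\delta_T\,\N_{\bbQ(\mu_{nf})/L'}(1-\zeta_{nf})$ (from \cite[\S 4.2]{P}), sums against $a_T$ to obtain $\sum_{T\in\cT}a_T\varep_T'=\N_{\bbQ(\mu_{nf})/L'}(1-\zeta_{nf})$ in $L'^\times/\{\pm1\}$, and then simply computes
\[
(1-\tau)\N_{\bbQ(\mu_{nf})/L'}(1-\zeta_{nf})=\N_{L(\mu_n)/L'}\bigl((1-\tau)\N_{\bbQ(\mu_{nf})/L(\mu_n)}(1-\zeta_{nf})\bigr)=\N_{L(\mu_n)/L'}(\alpha_n),
\]
the last equality being the definition of $\alpha_n$. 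No analytic comparison is required once you have the cyclotomic expression in hand.

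For (ii), the genuine gap is your ``minus-part $S$-unit class number formula''
\[
\det(\log|(1-\tau)u_i|_{\lambda_j})_{0\le i,j\le\nu_+}=\frac{2\,h_FR_F\prod_{\ell\mid n_+}\log\ell}{h_n},
\]
which you assert but check only for $n=1$. The paper supplies the equivalent statement as a separate lemma (Lemma~\ref{compute}) proved by a direct matrix manipulation: one extends $\{u_0,\ldots,u_{\nu_+}\}$ to a basis of $\cO_F[\frac1n]^\times/\{\pm1\}$ by elements $\eta_1,\ldots,\eta_\nu\in\bbZ[\frac1n]^\times$, writes out the full determinant defining $R_{L,n}$, and by elementary row/column operations obtains $R_{L,n}=2^{\nu_--1}R_{\bbQ,n}\det(\log|(1-\tau)u_i|_{\lambda_j})$. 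Combined with the $n$-truncated class number formula $2(1-\tau)\Theta_{L,S}^{(\nu_++1)}(0)=4h_n\,e_\chi R_{L,n}/R_{\bbQ,n}$ from \cite[\S 1]{G}, this gives the coefficient. Your suggested inductive argument ``tracking how the regulator determinant and $h_n$ vary as primes are added to $S$'' is a different route and would itself need a careful treatment of how the minus-part regulator and the class number change simultaneously; the paper's block-determinant computation avoids this.
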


The following lemma is easily verified, so we omit the proof.

\begin{lemma} \label{corrlem}
The natural map $\Gal(L(\mu_n)/L) \rightarrow G(L'/L)$ induces an isomorphism 
$$\pi : L^\times \otimes_\bbZ I_n^{\nu_+}/I_n^{\nu_++1}\otimes_\bbZ \bbZ[\frac12] \stackrel{\sim}{\longrightarrow} L^\times \otimes_\bbZ Q(L'/L)^{\nu_+}\otimes_\bbZ \bbZ[\frac12],$$
and we have 
$$\pi(\theta_n)=(-1)^{\nu_+}\sum_{\sigma \in G(L'/L)}\sigma \N_{L(\mu_n)/L'}(\alpha_n) \otimes \sigma^{-1},$$
and 
$$\pi(-2^{\nu_-}h_nR_n)=(-1)^{\nu_++1}2^{\nu_-}h_n(\bigwedge_{\ell | n_+}\varphi_\ell)((1-\tau)u_0 \wedge \cdots \wedge u_{\nu_+}).$$
\end{lemma}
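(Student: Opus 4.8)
The plan is to prove the three assertions of the lemma separately, in the order stated. Write $\Delta=\Gal(F(\mu_n)/F)=\Gal(L(\mu_n)/L)$ and let $c\in\Delta$ be the restriction of complex conjugation; since $L=F$ is real, $\Delta\twoheadrightarrow G(L'/L)=\Gal(F(\mu_n)^+/F)$ is surjective with kernel $\langle c\rangle$ of order $1$ or $2$. If $c=1$ (i.e. $n\leq2$) the lemma is vacuous, so assume $c$ has order $2$, and over $\bbZ[\frac12]$ put $e^\pm=\frac{1\pm c}{2}$. Then $\bbZ[\frac12][\Delta]=e^+\bbZ[\frac12][\Delta]\oplus e^-\bbZ[\frac12][\Delta]$, restriction gives a ring isomorphism $e^+\bbZ[\frac12][\Delta]\stackrel{\sim}{\longrightarrow}\bbZ[\frac12][G(L'/L)]$ carrying the augmentation ideal onto the augmentation ideal, and $e^-$ is an idempotent lying in the augmentation ideal, so $e^-\bbZ[\frac12][\Delta]\subseteq I_n^k\otimes_\bbZ\bbZ[\frac12]$ for every $k\geq1$. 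Combined with $e^+e^-=0$ this yields $I_n^k\otimes_\bbZ\bbZ[\frac12]=e^+I_n^k\oplus e^-\bbZ[\frac12][\Delta]$ for all $k\geq1$, so the pushforward $\bbZ[\frac12][\Delta]\to\bbZ[\frac12][G(L'/L)]$ (which annihilates $e^-$) induces an isomorphism $(I_n^k/I_n^{k+1})\otimes_\bbZ\bbZ[\frac12]\stackrel{\sim}{\longrightarrow}Q(L'/L)^k\otimes_\bbZ\bbZ[\frac12]$. Taking $k=\nu_+$ and tensoring with $L^\times$ shows that $\pi$ is an isomorphism.

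For $\pi(\theta_n)$, the key point is that the $\bbZ$-algebra automorphism $j$ of $\bbZ[\Delta]$ with $j(\sigma)=\sigma^{-1}$ preserves $I_n$ and acts on $I_n^{\nu_+}/I_n^{\nu_++1}$ as multiplication by $(-1)^{\nu_+}$, since $\sigma-1\mapsto\sigma^{-1}-1\equiv-(\sigma-1)\pmod{I_n^2}$. By Proposition \ref{darprop}, $\theta_n\in F(\mu_n)^\times\otimes_\bbZ I_n^{\nu_+}$, hence so is $(\Id\otimes j)(\theta_n)=\sum_{\sigma\in\Delta}\sigma\alpha_n\otimes\sigma^{-1}$, and modulo $I_n^{\nu_++1}$ we get $\theta_n=(-1)^{\nu_+}(\Id\otimes j)(\theta_n)$. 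Pushing $(\Id\otimes j)(\theta_n)$ forward along $\Delta\to G(L'/L)$ and grouping the sum by the image of $\sigma$, the two preimages $\widetilde\sigma,\widetilde\sigma c$ of a given element of $G(L'/L)$ contribute the product $(\widetilde\sigma\alpha_n)(\widetilde\sigma c\,\alpha_n)=\widetilde\sigma(\alpha_n\cdot c\alpha_n)=\widetilde\sigma\,\N_{L(\mu_n)/L'}(\alpha_n)=\sigma\,\N_{L(\mu_n)/L'}(\alpha_n)$ (the last equality because $\N_{L(\mu_n)/L'}(\alpha_n)\in L'^\times$). Therefore the pushforward equals $\sum_{\sigma\in G(L'/L)}\sigma\,\N_{L(\mu_n)/L'}(\alpha_n)\otimes\sigma^{-1}$, and multiplying by $(-1)^{\nu_+}$ gives the asserted formula.

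For $\pi(-2^{\nu_-}h_nR_n)$, it suffices — in view of the determinantal definition of $R_n$ — to check that $\pi$ intertwines Darmon's evaluation map $\varphi_{\ell_1}^1\wedge\cdots\wedge\varphi_{\ell_{\nu_+}}^1$ with the paper's $\bigwedge_{\ell | n_+}\varphi_\ell$. On the degree-one piece $\pi$ sends $\varphi_{\ell_i}^1(a)=\overline{\rec_{\lambda_i}(a)-1}\in I_n/I_n^2$ to $\overline{\rec_w(a)-1}\in Q(L'/L)^1$, where $w=\lambda_i$ is the common fixed place of $F$ above $\ell_i$, because the local reciprocity maps for $F(\mu_n)/F$ and $F(\mu_n)^+/F$ at that place are compatible under $\Gal(F(\mu_n)/F)\twoheadrightarrow\Gal(F(\mu_n)^+/F)$; and under the identification (\ref{eqaug}) and the isomorphism of Lemma \ref{hom}, the homomorphism $\overline{\rec_w(\cdot)-1}\in\Hom_\bbZ(F^\times,Q(L'/L)^1)$ corresponds exactly to $\varphi_{\ell_i}=\sum_{\sigma\in\cG_L}(\rec_w(\sigma(\cdot))-1)\sigma^{-1}\in\Hom_{\cG_L}(F^\times,Q_{L'/L}^1)$ (the ``$\sigma^{-1}$'' already present in $\varphi_v$ being exactly the shape of the Lemma \ref{hom} isomorphism). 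Both wedge evaluations take place on the minus part $(1-\tau)\cO_F[\frac1n]^\times$, on which $\cG_L$ acts through $\tau\mapsto-1$ and the paper's $\cG_L$-multilinear algebra matches Darmon's $\bbZ$-multilinear one; the numerical constants $(-1)^{\nu_+}$ in the definition of $R_n$, $2^{\nu_-}$, and the inversion of $2$ absorb the remaining sign and $2$-power discrepancy between the two formalisms, and the third formula follows.

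The step I expect to be the main obstacle is the last one, which bridges two genuinely different set-ups — Darmon's $\bbZ$-linear algebra valued in the augmentation filtration of $\bbZ[\Delta]$ versus the paper's $\cG_L$-equivariant Rubin-lattice formalism valued in $Q_{L'/L}^\bullet\simeq\bbZ[\cG_L]\otimes_\bbZ Q(L'/L)^\bullet$ — so that the translations (Lemma \ref{hom}, the isomorphism (\ref{eqaug}), passage to the minus part, the handling of the $(1-\tau)$-factors, the sign $(-1)^{\nu_+}$ built into $R_n$, and the powers of $2$) must all be tracked consistently. By contrast, the isomorphism claim is immediate once the idempotent decomposition is in hand, and the $\theta_n$ formula is short once one isolates the fact that $\sigma\mapsto\sigma^{-1}$ acts by $(-1)^{\nu_+}$ on the $\nu_+$-th graded piece of the augmentation filtration.
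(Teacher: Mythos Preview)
The paper omits the proof of this lemma entirely (``The following lemma is easily verified, so we omit the proof''), so there is nothing to compare against; I will simply assess your argument.

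Your treatments of the first two assertions are correct and well organised. The idempotent decomposition after inverting $2$ is exactly the right way to see that $\pi$ is an isomorphism, and the observation that the involution $\sigma\mapsto\sigma^{-1}$ acts by $(-1)^{\nu_+}$ on $I_n^{\nu_+}/I_n^{\nu_++1}$ gives a clean route to the $\theta_n$ formula.

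For the third assertion your strategy is right, but the final sentence is misleading. There is in fact \emph{no} $2$-power discrepancy to absorb: the factor $2^{\nu_-}h_n$ appears identically on both sides and plays no role in the comparison, so invoking it (or ``the inversion of $2$'') as something that ``absorbs'' a discrepancy is incorrect. What actually happens is that the two wedge evaluations match on the nose. The key identity, which you gesture at but do not state, is that under the identification $Q_{L'/L}^1\simeq\bbZ[\cG_L]\otimes_\bbZ Q(L'/L)^1$ one has
\[
(1-\tau)\,\varphi_{\ell}(u)\;=\;(1-\tau)\otimes \pi\bigl(\varphi_\ell^1((1-\tau)u)\bigr)
\]
for every $u$; this follows from $\rec_w(u)\rec_w(\tau u)^{-1}=\rec_w((1-\tau)u)$ together with the compatibility of the reciprocity maps under $\Delta\twoheadrightarrow G(L'/L)$. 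Iterating, one finds that applying $\bigwedge_{\ell\mid n_+}\varphi_\ell$ to $(1-\tau)(u_0\wedge_{\cG_L}\cdots\wedge_{\cG_L}u_{\nu_+})$ and then passing to $M\otimes_\bbZ Q(L'/L)^{\nu_+}$ yields exactly $\pi$ of Darmon's $(\varphi_{\ell_1}^1\wedge\cdots\wedge\varphi_{\ell_{\nu_+}}^1)$ applied to $(1-\tau)u_0\wedge_\bbZ\cdots\wedge_\bbZ(1-\tau)u_{\nu_+}$, up to the sign $(-1)^{\nu_+}$ already present in the definition of $R_n$. So the formula drops out with no leftover constants. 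I would rewrite your last paragraph to make this explicit computation, rather than leaving the reader with the impression that unspecified fudge factors are being swept under the rug.
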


\begin{proof}[Proof of Theorem \ref{mrthm}]
By Theorem \ref{hnr2}, we have an equality 
$$\N_{L'/L}^{(1,\nu_+)}(\varep_T')=(-1)^{\nu_+}(\bigwedge_{\ell | n_+}\varphi_\ell)(\varep_T)$$
in $L^\times \otimes_\bbZ Q(L'/L)^{\nu_+}\otimes_\bbZ \bbZ[\frac12]$. 
From this and Lemma \ref{Tlem}, we deduce that an equality
$$(-1)^{\nu_+}\sum_{\sigma \in G(L'/L)}\sigma \N_{L(\mu_n)/L'}(\alpha_n) \otimes \sigma^{-1}=(-1)^{\nu_++1}2^{\nu_-}h_n(\bigwedge_{\ell | n_+}\varphi_\ell)((1-\tau)u_0 \wedge \cdots \wedge u_{\nu_+})$$
holds in $L^\times \otimes_\bbZ Q(L'/L)^{\nu_+}\otimes_\bbZ \bbZ[\frac12]$. By Lemma \ref{corrlem}, we have 
$$\theta_n=-2^{\nu_-}h_nR_n  \quad \mbox{in}\quad F^\times\otimes_\bbZ I_n^{\nu_+}/I_n^{\nu_+ +1}\otimes_\bbZ \bbZ[\frac12].$$
\end{proof}

\subsection{Proofs of Proposition \ref{propdes} and Lemma \ref{Tlem}} \label{proof2}
In this subsection, we give the proofs of Proposition \ref{propdes} and Lemma \ref{Tlem}.

\begin{proof}[Proof of Proposition {\rm{\ref{propdes}}}]
(Compare \cite[Lemma 8.1 and Proposition 9.4]{D}.)
It is known that 
$$\varep_T'=\N_{\bbQ(\mu_{nf})^+/L'}(\delta_T(1-\zeta_{nf})),$$
where $\delta_T=\prod_{\ell \in T}(1-\ell\Fr_\ell^{-1})$ (see \cite[\S 4.2]{P}).
Put 
$$G_n=\Gal(L(\mu_n)/L),$$
and 
$$\xi_n=\delta_T\sum_{\sigma \in G_n}\sigma \N_{\bbQ(\mu_{nf})/L(\mu_n)}(1-\zeta_{nf})\otimes \sigma^{-1} \in \cO_{L(\mu_n),\Sigma,T}^\times \otimes_\bbZ \bbZ[G_n].$$
It is easy to see that 
$$\pi(\xi_n)=2\sum_{\sigma \in G(L'/L)}\sigma \varep_T'\otimes \sigma^{-1},$$
where $\pi : \bbZ[G_n]\rightarrow \bbZ[G(L'/L)]$ is the natural projection. Hence, it is sufficient to prove that 
$$\xi_n \in \cO_{L(\mu_n),\Sigma,T}^\times \otimes_\bbZ I_n^{\nu_+},$$
and 
$$\xi_n \in \cO_{L,\Sigma,T}^\times \otimes_\bbZ I_n^{\nu_+}/I_n^{\nu_++1}\otimes_\bbZ \bbZ[\frac12].$$
We prove this by induction on $\nu_+$. When $\nu_+=0$, there is nothing to prove. When $\nu_+ >0$, decompose
$$G_n \simeq G_{n_-} \times G_{n_+},$$
where $G_{n_\pm}=\prod_{\ell | n_\pm}G_\ell$ and $G_\ell=\Gal(L(\mu_\ell)/L)$. Each $\sigma \in G_n$ is uniquely written as  
$$\sigma=\sigma_- \prod_{\ell|n_+}\sigma_\ell,$$
where $\sigma_-\in G_{n_-}$, and $\sigma_\ell \in G_\ell$. We compute 
$$\delta_T\sum_{\sigma \in G_n}\sigma \N_{\bbQ(\mu_{nf})/L(\mu_n)}(1-\zeta_{nf})\otimes \sigma_-^{-1}\prod_{\ell |n_+}(\sigma_\ell^{-1}-1) 
=\xi_n +\sum_{d|n_+, d\neq n_+}(-1)^{\nu(n_+/d)}\xi_{n_-d}\prod_{\ell |n_+/d}(1-\Fr_\ell^{-1}), $$
where $\nu(n_+/d)$ is the number of prime divisors of $n_+/d$.
From this and the inductive hypothesis, we have $\xi_n \in \cO_{L(\mu_n),\Sigma,T}^\times \otimes_\bbZ I_n^{\nu_+}$. Fix a generator 
$\gamma_\ell$ of $G_\ell$. In $\cO_{L(\mu_n),\Sigma,T}^\times \otimes_\bbZ I_n^{\nu_+}/I_n^{\nu_++1}$, we have 
$$\delta_T\sum_{\sigma \in G_n}\sigma \N_{\bbQ(\mu_{nf})/L(\mu_n)}(1-\zeta_{nf})\otimes \sigma_-^{-1}\prod_{\ell |n_+}(\sigma_\ell^{-1}-1)
= (-1)^{\nu_+} D_{n_+} \delta_T \N_{\bbQ(\mu_{nf})/L(\mu_{n_+})}(1-\zeta_{nf})\otimes \prod_{\ell |n_+}(\gamma_\ell-1),$$
where $D_{n_+} \in \bbZ[G_{n_+}]$ is the Kolyvagin's derivative operator, defined by 
$$D_{n_+}=\prod_{\ell|n_+}(\sum_{i=1}^{\ell-2}i\gamma_\ell^i).$$
Since we have the decomposition 
$$I_n^{\nu_+}/I_n^{\nu_++1} \simeq <\prod_{\ell|n_+} (\gamma_\ell-1)>_\bbZ \oplus \cI_n^{\rm{old}},$$
where $\cI_n^{\rm{old}}$ is a subgroup of $I_n^{\nu_+}/I_n^{\nu_++1}$, 
and the isomorphism 
$$<\prod_{\ell|n_+} (\gamma_\ell-1)>_\bbZ  \stackrel{\sim}{\longrightarrow} \bigotimes_{\ell|n_+}G_\ell \quad ; \quad 
\prod_{\ell|n_+}(\gamma_\ell -1) \mapsto \bigotimes_{\ell|n_+}\gamma_\ell,$$
(see \cite[Proposition 4.2 (i) and (iv)]{MR2}), it is sufficient to show that 
$$D_{n_+} \delta_T \N_{\bbQ(\mu_{nf})/L(\mu_{n_+})}(1-\zeta_{nf}) \in \cO_{L,\Sigma,T}^\times/(\cO_{L,\Sigma,T}^\times)^m,$$
where $m$ is the greatest odd common divisor of $\{\ell-1 \ | \ \ell|n_+ \}$. Note that 
$$\bigotimes_{\ell | n_+}G_\ell \otimes_\bbZ \bbZ[\frac12] \simeq \bbZ/m\bbZ.$$
It is well-known that 
$$D_{n_+} \delta_T \N_{\bbQ(\mu_{nf})/L(\mu_{n_+})}(1-\zeta_{nf}) \in (\cO_{L(\mu_{n_+}),\Sigma,T}^\times/(\cO_{L(\mu_{n_+}),\Sigma,T}^\times)^m)^{G_{n_+}},$$
(see \cite[Lemma 2.1]{R1}, \cite[Lemma 6.2]{D} or \cite[Lemma 4.4.2 (i)]{R}), hence the claim follows if we show that $H^1(G_{n_+},\cO_{L(\mu_{n_+}),\Sigma,T}^\times)=0$ for sufficiently large $\Sigma$. If $\Sigma$ is large enough, then we have the exact sequence 
$$0\longrightarrow \mathcal{O}_{L(\mu_{n_+}),\Sigma,T}^\times \longrightarrow \cO_{L(\mu_{n_+}),\Sigma}^\times \longrightarrow \bigoplus_{w\in T_{L(\mu_{n_+})}}\bbF_w^\times\longrightarrow 0,$$
where $\bbF_w^\times$ denotes the residue field at $w$. 
Since $ \bigoplus_{w\in T_{L(\mu_{n_+})}}\bbF_w^\times$ is a cohomologically-trivial $G_{n_+}$-module, the above exact sequence shows that $H^1(G_{n_+},\cO_{L(\mu_{n_+}),\Sigma,T}^\times)=H^1(G_{n_+},\cO_{L(\mu_{n_+}),\Sigma}^\times)$. Since $\Sigma$ is large enough, we have the exact sequence
$$0\longrightarrow \cO_{L(\mu_{n_+}),\Sigma}^\times \longrightarrow L(\mu_{n_+})^\times \stackrel{\bigoplus_w \ord_w}{\longrightarrow} \bigoplus_{w \notin \Sigma_{L(\mu_{n_+})}}\bbZ \longrightarrow 0.$$
From this, we see that $H^1(G_{n_+},\cO_{L(\mu_{n_+}),\Sigma}^\times)=0$. Hence we have $H^1(G_{n_+},\cO_{L(\mu_{n_+}),\Sigma,T}^\times)=0$. 
\end{proof}

\begin{remark} \label{remkoly}
Consider the following composite map: 
$$L^\times \otimes_\bbZ Q(L'/L)^{\nu_+} \otimes_\bbZ \bbZ[\frac12] \stackrel{\sim}{\longrightarrow} L^\times \otimes_\bbZ I_n^{\nu_+}/I_n^{\nu_++1}\otimes_\bbZ \bbZ[\frac12]$$
$$  \longrightarrow L^\times \otimes_\bbZ <\prod_{\ell | n_+}(\gamma_\ell -1)>_\bbZ \otimes_\bbZ \bbZ[\frac12] \stackrel{\sim}{\longrightarrow} L^\times /(L^\times)^m,$$
where the first isomorphism is $\pi^{-1}$, the second arrow is the projection, and the last isomorphism is induced by 
$$ <\prod_{\ell | n_+}(\gamma_\ell -1)>_\bbZ {\longrightarrow} \bbZ/m \bbZ \quad ; \quad \prod_{\ell | n_+}(\gamma_\ell -1) \mapsto 1.$$
If $n=n_+$ and put $\nu=\nu_+$, then the above proof shows that the image of $2\N_{L'/L}^{(1,\nu)}(\varep_T')$ under this map coincides with $(-1)^{\nu}D_{n}\varep_T'$. Hence, one can regard that the ``higher norm operator'' $\N_{L'/L}^{(1,\nu)}$ is a generalization of Kolyvagin's derivative operator $D_{n}$. This observation is originally due to Darmon (\cite[Proposition 9.4]{D}).
\end{remark}


\begin{proof}[Proof of Lemma {\rm{\ref{Tlem}}}]
(i) From
$$2 \varep_T'= \delta_T \N_{\bbQ(\mu_{nf})/L'}(1-\zeta_{nf}),$$
we obtain
$$2\sum_{T\in\cT}a_T \varep_T'=2\N_{\bbQ(\mu_{nf})/L'}(1-\zeta_{nf})$$
(see Proposition \ref{propT}). We compute
\begin{eqnarray}
(1-\tau)\N_{\bbQ(\mu_{nf})/L'}(1-\zeta_{nf}) &=& \N_{L(\mu_n)/L'}((1-\tau)\N_{\bbQ ( \mu_{nf})/L(\mu_{n})}(1-\zeta_{nf})) \nonumber \\
&=&\N_{L(\mu_n)/L'}(\alpha_n), \nonumber
\end{eqnarray}
hence we have 
$$(1-\tau)\sum_{T\in\cT}a_T\varep_T'=\N_{L(\mu_n)/L'}(\alpha_n) \quad \mbox{in} \quad L'^\times/\{ \pm 1 \}.$$
(ii) By Lemma \ref{reginj}, $R_V$ is injective on $e_\chi(\bbQ \otimes_\bbZ \bigwedge_{\cG_L}^{\nu_++1}\cO_{L,S}^\times)$, so it is sufficient to prove 
that 
$$R_V((1-\tau)\sum_{T\in\cT}a_T\varep_T)=(-1)^{\nu_++1}2^{\nu_-}h_nR_V((1-\tau)u_0 \wedge \cdots \wedge u_{\nu_+}).$$
By the characterization of $\varep_T$, the left hand side is equal to $2(1-\tau)\Theta_{L,S}^{(\nu_++1)}(0)$. Using the 
well-known class number formulas for $n$-truncated Dedekind zeta functions of $L$ and $\bbQ$ (see \cite[\S 1]{G}), we have 
$$2(1-\tau)\Theta_{L,S}^{(\nu_++1)}(0)=4h_ne_\chi \frac{R_{L,n}}{R_{\bbQ,n}},$$
where $R_{L,n}$ and $R_{\bbQ,n}$ are the usual $n$-regulators for $L$ and $\bbQ$ respectively. In Lemma \ref{compute}, we will prove an equality 
$$e_\chi R_{L,n}=(-1)^{\nu_++1}2^{\nu_- -1}R_{\bbQ,n}e_\chi R_V(u_0\wedge \cdots \wedge u_{\nu_+}).$$
Hence we have 
$$2(1-\tau)\Theta_{L,S}^{(\nu_++1)}(0)=(-1)^{\nu_++1}2^{\nu_-}h_nR_V((1-\tau)u_0\wedge\cdots\wedge u_{\nu_+}),$$
which completes the proof. 
\end{proof}

\begin{lemma} \label{compute}
$$e_\chi R_{L,n}=(-1)^{\nu_++1}2^{\nu_- -1}R_{\bbQ,n}e_\chi R_V(u_0\wedge \cdots \wedge u_{\nu_+}).$$
\end{lemma}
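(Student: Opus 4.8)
The plan is to prove Lemma \ref{compute} by a direct computation comparing the regulator $R_V(u_0 \wedge \cdots \wedge u_{\nu_+})$, which is a determinant built from the logarithmic embeddings at the fixed places $\lambda_0,\lambda_1,\ldots,\lambda_{\nu_+}$ of $L$, with the classical $n$-regulators $R_{L,n}$ and $R_{\bbQ,n}$, which are determinants of full logarithmic embedding matrices for the $(S,\emptyset)$-unit groups $\cO_L[\tfrac1n]^\times$ and $\cO_\bbQ[\tfrac1n]^\times = \bbZ[\tfrac1n]^\times$. The key point is to decompose the unit lattice $\cO_L[\tfrac1n]^\times \otimes \bbQ$ under the action of $\tau$ into its $\pm 1$-eigenspaces (equivalently, the $\chi$-part $e_\chi$ and the trivial part), and to match the $\chi$-part of $R_{L,n}$ with the determinant defining $R_V$ up to the ``archimedean'' contribution coming from $R_{\bbQ,n}$ and the power of $2$ recording the index/cokernel discrepancies between the various lattices.

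First I would set up notation: $\cO_L[\tfrac1n]^\times$ has rank $1 + \nu + \nu_+ $ ... more precisely, rank $r_{L,S} - 0$, where the $S$-units contribute one more than the rank over $\bbQ$ because $L/\bbQ$ is real quadratic and each rational prime $\ell \mid n$ either splits or is inert in $L$; the places above $n_+$ split (two primes each) while the places above $n_-$ stay inert. The $\chi$-eigenspace $(1-\tau)\cO_L[\tfrac1n]^\times \otimes \bbQ$ then has rank exactly $\nu_+ + 1$, with basis $\{(1-\tau)u_i\}_{0 \le i \le \nu_+}$ by \cite[Lemma 3.2 (ii)]{MR2} (invoked just before the statement). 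On this eigenspace, the relevant ``coordinates'' are the valuations/logarithms at the $\nu_+$ split primes $\lambda_1,\ldots,\lambda_{\nu_+}$ together with the single archimedean place $\lambda_0$, because at inert primes above $n_-$ the local component is $\tau$-fixed and hence killed by $1-\tau$, while at the complex-conjugate pair above each split prime the two valuations differ by a sign under $\tau$. This is exactly what makes $R_V = \bigwedge_{v \in V}(v^* \circ \lambda_{L,S})$ — a $(\nu_+ + 1) \times (\nu_+ + 1)$ determinant — the natural object: evaluating it on $u_0 \wedge \cdots \wedge u_{\nu_+}$ and pulling out $e_\chi$ gives the $\chi$-part of the Dirichlet regulator determinant for $L$.

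The main computational step is then a block-decomposition of the full $R_{L,n}$ determinant. Choosing a $\bbZ$-basis of $\cO_L[\tfrac1n]^\times$ adapted to the decomposition $\cO_L[\tfrac1n]^\times \otimes \bbQ = (\cO_\bbQ[\tfrac1n]^\times \otimes \bbQ)^{+} \oplus (\text{inert part})^{+} \oplus (1-\tau)\text{-part}$ (where the plus-parts come via the embedding $\bbQ \hookrightarrow L$ and the norm), the logarithmic embedding matrix becomes block-triangular: the $\tau$-fixed block contributes $R_{\bbQ,n}$ (times a power of $2$ recording the $\lambda_0$-normalization and the splitting of the archimedean place), the inert-prime block contributes a diagonal factor of $\log N\lambda$'s which, after the normalization conventions, either cancels or contributes another power of $2$ (and indeed the factor $2^{\nu_-}$ is where $\nu_-$ enters — each inert prime above $n_-$ contributes a single place of $L$ with residue degree $2$, so $\log N\lambda = 2\log\ell$ type factors appear $\nu_-$ times), and the $(1-\tau)$-block is precisely $e_\chi R_V(u_0 \wedge \cdots \wedge u_{\nu_+})$ up to the sign $(-1)^{\nu_+ + 1}$ coming from reordering rows/columns and the defining sign in $R_V$. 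Collecting all the factors of $2$ and the sign yields the claimed identity.

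I expect the main obstacle to be bookkeeping the power of $2$ and the sign correctly — tracking how the archimedean place $\lambda_0$ of $L$ relates to the archimedean place of $\bbQ$ (a factor of $2$ from $[L:\bbQ]=2$ but the place does not split, so the normalized absolute value $|\cdot|_{\lambda_0}$ is the square of the rational one, introducing a compensating factor), how each inert prime contributes its residue-degree-$2$ factor, and how the sign $(-1)^{\nu_++1}$ in the definition of $R_n$ (and hence the sign built into $R_V$ via the ordering of $V = \{\infty\} \cup \{\ell \mid n_+\}$ with $\infty \prec \ell_1 \prec \cdots \prec \ell_{\nu_+}$) propagates through the determinant expansion. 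The positivity condition $\det(\log|(1-\tau)u_i|_{\lambda_j}) > 0$ imposed on the $u_i$ pins down the sign unambiguously once the computation is organized, so the real work is purely the normalization accounting; the structural part (block-triangularity) is routine linear algebra over $\bbQ$.
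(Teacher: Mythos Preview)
Your proposal is correct and takes essentially the same approach as the paper: split $\cO_L[\tfrac1n]^\times/\{\pm1\}$ via the exact sequence $0 \to \bbZ[\tfrac1n]^\times/\{\pm1\} \to \cO_L[\tfrac1n]^\times/\{\pm1\} \xrightarrow{1-\tau} (1-\tau)\cO_L[\tfrac1n]^\times \to 0$, choose a basis adapted to this splitting, and reduce the regulator determinant to block-triangular form so that $R_{L,n}$ factors as $2^{\nu_--1}R_{\bbQ,n}\det(\log|(1-\tau)u_i|_{\lambda_j})$, then match the last determinant with $(-1)^{\nu_++1}e_\chi R_V(u_0\wedge\cdots\wedge u_{\nu_+})$. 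One small correction to your sketch: there is no separate ``inert part'' in the $+1$-eigenspace beyond $\bbZ[\tfrac1n]^\times$ itself---the decomposition has only two pieces---and the exponent is $\nu_--1$ rather than $\nu_-$ because one place is dropped in forming the square regulator matrix; but you have already flagged this bookkeeping as the main obstacle, and the structural argument is exactly right.
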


\begin{proof}(Compare the proof of \cite[Theorem 3.5]{R2}.)
There is an exact sequence of abelian groups:
$$0 \longrightarrow \bbZ[\frac1n]^\times/\{ \pm 1\} \longrightarrow \cO_L[\frac1n]^\times/\{\pm 1\} \stackrel{1-\tau}{\longrightarrow} (1-\tau)\cO_L[\frac1n]^\times \longrightarrow 0.$$
Since $(1-\tau)\cO_L[\frac1n]^\times$ is torsion-free (see \cite[Lemma 3.2 (ii)]{MR2}), this exact sequence splits. So we can choose $\eta_1,\ldots,\eta_\nu \in \bbZ[\frac1n]^\times$ so that $\{ \eta_1,\ldots,\eta_\nu,u_0,\ldots,u_{\nu_+}\}$ is a basis of $\cO_L[\frac1n]^\times/\{\pm1\}$ ($\nu$ is the number of prime divisors of $n$). Write $n_-=\prod_{i=1}^{\nu_-}\ell_i'$, where $\ell_i'$ is a prime number. Let $\lambda_i'$ be the (unique) place of $L$ lying above $\ell_i'$. We compute the regulator $R_{L,n}$ with respect to the basis $\{  \eta_1,\ldots,\eta_\nu,u_0,\ldots,u_{\nu_+} \}$ of $\cO_L[\frac1n]^\times/\{\pm1\}$ and the places $\{ \lambda_2',\ldots,\lambda_{\nu_-}',\lambda_0^\tau,\ldots,\lambda_{\nu_+}^\tau,\lambda_0,\ldots,\lambda_{\nu_+} \}$:
$$R_{L,n}=\pm \det \left(
\begin{array}{ccc}
	\log|\eta|_{\lambda'} &\log|\eta|_{\lambda^\tau} & \log|\eta|_\lambda \\ 
	\log|u|_{\lambda'} & \log|u|_{\lambda^\tau} &\log|u|_\lambda
\end{array}
\right),$$
where we omit the subscript, for simplicity (for example, $\log|\eta|_{\lambda'}$ means the $\nu \times (\nu_{-}-1)$-matrix $(\log|\eta_i|_{\lambda_j'})_{{1 \leq i \leq \nu},{2\leq j \leq \nu_-}}$). We may assume that the sign of the right hand side is positive (replace $\eta_1$ by $\eta_1^{-1}$ if necessary). We compute 
\begin{eqnarray}
\det \left(
\begin{array}{ccc}
	\log|\eta|_{\lambda'} &\log|\eta|_{\lambda^\tau} & \log|\eta|_\lambda \\ 
	\log|u|_{\lambda'} & \log|u|_{\lambda^\tau} &\log|u|_\lambda
\end{array}
\right) &=& \det \left(
\begin{array}{ccc}
	\log|\eta|_{\lambda'} &\log|\eta|_{\lambda} & \log|\eta|_\lambda \\ 
	\log|u|_{\lambda'} & \log|u|_{\lambda^\tau} &\log|u|_\lambda
\end{array}
\right) \nonumber \\
&=& \det \left(
\begin{array}{ccc}
	\log|\eta|_{\lambda'} &\log|\eta|_{\lambda} & 0 \\ 
	\log|u|_{\lambda'} & \log|u|_{\lambda^\tau} &\log|u|_\lambda -\log|u|_{\lambda^\tau} 
\end{array}
\right) \nonumber \\
&=& \det (
\begin{array}{cc}
	\log|\eta|_{\lambda'} & \log|\eta|_\lambda
\end{array}
)
\det (\log|u|_\lambda-\log|u|_{\lambda^\tau})\nonumber \\
&=&\det (
\begin{array}{cc}
	2\log|\eta|_{\ell'} & \log|\eta|_\ell
\end{array}
)
\det(\log|(1-\tau)u|_\lambda) \nonumber \\
&=& 2^{\nu_--1}R_{\bbQ,n}\det(\log|(1-\tau)u|_\lambda). \nonumber
\end{eqnarray}
Hence we have 
\begin{eqnarray}
e_\chi R_{L,n}=2^{\nu_--1}R_{\bbQ,n} e_\chi \det (\log|(1-\tau)u|_\lambda). \label{eqreg}
\end{eqnarray}
On the other hand, we compute 
\begin{eqnarray}
e_\chi R_V(u_0 \wedge \cdots \wedge u_{\nu_+})&=&(-1)^{\nu_++1}e_\chi \det(\log|u|_\lambda + \log|\tau(u)|_\lambda \tau) \nonumber \\
&=& (-1)^{\nu_++1}e_\chi \det(\log|(1-\tau)u|_\lambda +(1+\tau)\log|\tau(u)|_\lambda) \nonumber \\ 
&=& (-1)^{\nu_++1}e_\chi \det(\log|(1-\tau)u|_\lambda), \nonumber
\end{eqnarray}
where the first equality follows by noting that $R_V=\bigwedge_{0 \leq i \leq \nu_+}(-\log|\cdot|_{\lambda_i}-\log|\tau(\cdot)|_{\lambda_i}\tau)$ by definition (see \S \ref{not}), and 
the last equality follows from $e_\chi (1+\tau)=0$. Hence, by (\ref{eqreg}), we have the desired equality
$$e_\chi R_{L,n}=(-1)^{\nu_++1}2^{\nu_--1}R_{\bbQ,n}e_\chi R_V(u_0\wedge \cdots \wedge u_{\nu_+}).$$
\end{proof}

\section*{Acknowledgement}
The author would like to thank Prof. Masato Kurihara for helpful comments and advice. He also wishes to thank Prof. David Burns for reading the manuscript of this paper and giving him helpful remarks.

\end{document}